\theoremstyle{plain}
\newtheorem{theorem}{Theorem}[section]
\newtheorem{lemma}[theorem]{Lemma}
\newtheorem{corollary}[theorem]{Corollary}
\newtheorem{assumption}[theorem]{Assumption}
\newtheorem{remark}[theorem]{Remark}
\theoremstyle{definition}
\newtheorem{definition}[theorem]{Definition}
\newtheorem{example}{Example}
\DeclarePairedDelimiter\ceil{\lceil}{\rceil}
\newcommand{\R}{\mathbb R}
\newcommand{\D}{\mathbb D}
\newcommand{\W}{\mathcal W}
\newcommand{\Ec}{\mathcal E}
\DeclareMathOperator*{\argmin}{arg\,min}
\DeclareMathOperator*{\E}{\mathbb{E}}
\newcommand{\N}{\mathbb N}
\newcommand{\A}{\mathcal A}
\newcommand{\F}{\mathcal F}
\newcommand{\M}{\mathcal M}
\newcommand{\Pb}{\mathbb P}
\newcommand{\B}{\mathbb B}
\newcommand{\z}{\mathbb S}
\newcommand{\diam}{\textup{diam}}
\newcommand{\tr}{\textup{tr}}
\newcommand{\one}{\mathds{1}}
\newcommand{\nbf}{\noindent\textbf}
\newcommand{\Sb}{\mathbb S}
\newcommand{\wbarmax}{\bar w_{\max}^{(T)}}
\newcommand{\whatmax}{\hat w_{\max}^{(T)}}
\newcommand{\T}{\mathbb T}
\icmltitlerunning{Learning the Uncertainty Sets for Linear Dynamics via Set Membership: A Non-asymptotic Analysis}
\begin{document}

\twocolumn[
\icmltitle{Learning the Uncertainty Sets of Linear Control Systems via Set Membership:\\A Non-asymptotic Analysis}



\icmlsetsymbol{equal}{*}

\begin{icmlauthorlist}
\icmlauthor{Yingying Li}{equal,yyy}
\icmlauthor{Jing Yu}{equal,comp}
\icmlauthor{Lauren Conger}{comp}
\icmlauthor{Taylan Kargin}{comp}
\icmlauthor{Adam Wierman}{comp}
\end{icmlauthorlist}

\icmlaffiliation{yyy}{University of Illinois Urbana-Champaign}
\icmlaffiliation{comp}{California Institute of Technology
}

\icmlcorrespondingauthor{Yingying Li}{yl101@illinois.edu}


\vskip 0.3in
]



\printAffiliationsAndNotice{\icmlEqualContribution} 

\begin{abstract}
This paper studies uncertainty set estimation for unknown linear systems. Uncertainty sets are crucial for the quality of robust control since they directly influence the conservativeness of the control design.	Departing from the confidence region analysis of least squares estimation, this paper focuses on set membership estimation (SME). Though  good numerical performances have attracted applications of SME in the control literature, the non-asymptotic convergence rate of SME for linear systems remains an open question. This paper provides the first convergence rate bounds for SME and discusses variations of SME under relaxed assumptions. We also provide numerical results demonstrating SME's practical promise.
\end{abstract}

\section{Introduction}\label{sec: introduction}

The problem of estimating unknown linear dynamical systems of the form $x_{t+1}=A^*x_t+B^* u_t+w_t$ with unknown parameters $(A^*, B^*)$ has seen considerable progress recently \cite{sarker2023accurate, chen2021black,simchowitz2020naive,wagenmaker2020active,simchowitz2018learning,dean2018regret,abbasi2011regret}. 
Most literature focuses on the analysis of the least squares estimator (LSE) and its variants, where sharp bounds on the convergence rates for subGaussian disturbances $w_t$ have been obtained \cite{simchowitz2020naive,simchowitz2018learning}. Building on this, there is a rapidly growing body of literature on ``learning to control" unknown linear systems that leverages LSE to achieve various control objectives, such as stability and regret \cite{Chang2024regret,lale2022reinforcement,simchowitz2020naive,kargin2022thompson,mania2019certainty, dean2019safely}.

However, for successful application of learning-based control methods to safety-critical  applications,  it is crucial to quantify the uncertainties of the estimated system and to robustly satisfy safety constraints and stability despite these uncertainties \cite{wabersich2023data,brunke2022safe}. A promising framework for achieving this is to estimate the uncertainty set of the unknown system parameters and to utilize robust controllers to satisfy the robust constraints under any parameters in the uncertainty set \cite{brunke2022safe,hewing2020learning}. Uncertainty set estimation is crucial for the success of robust control: on the one hand, too large of an uncertainty set gives rise to over-conservative control actions, resulting in degraded performance; on the other hand, if the uncertainty set is underestimated and fails to contain the true system, the resulting controller may lead to unsafe behaviors \cite{brunke2022safe,petrik2019beyond}. 

To estimate uncertainty sets, a popular method is to construct LSE's confidence regions  \cite{dean2019safely,simchowitz2020naive}. However, this approach yields a confidence region for a point estimate rather than directly estimating the uncertainty set of the model. Further, the confidence regions are usually derived from concentration inequalities, which allows convergence rate analysis but may suffer conservative constant factors \cite{petrik2019beyond,simchowitz2020naive}.

In this paper, we instead focus on a direct uncertainty set estimation method: set membership estimation (SME), which estimates the uncertainty set without relying on the concentration inequalities  underlying the approaches based on LSE. SME has a long history in the control community \cite{yu2023online,lauricella2020set,lorenzen2019robust,livstone1996asymptotic,fogel1982value,bertsekas1971control}.  SME has primarily been proposed for scenarios with bounded disturbances, which is common in safety-critical systems, e.g. power systems \cite{qi2012power}, unmanned aerial vehicles (UAV) \cite{benevides2022disturbance,narendra1986robust}, and building control \cite{zhang2016decentralized}. Further, the bounded disturbance is a standard assumption in  control when certain safety requirements are desired, such as robust (adaptive) constrained control \cite{lu2023robust,lorenzen2019robust,dean2019safely}, online (constrained) control \cite{li2021online,li2021safe,liu2023online,li2023online}.

Consequently, SME has been widely adopted in the robust (adaptive) constrained control literature \cite{lorenzen2019robust,bujarbaruah2020adaptive,zhang2021trajectory, parsi2020robust, parsi2020active, sasfi2022robust} and the online control literature \cite{ho2021online, yu2023online, yeh2022robust, yu2023ltv}. Figure \ref{fig: teaser}  provides a toy example  illustrating SME's promising performance  under bounded disturbances.

On the theory side, the convergence analysis of SME generally considers a simple regression problem: 
$y_t=\theta^*x_t +w_t$ with a deterministic sequence of ${x}_t$ and bounded i.i.d.\ disturbances $w_t$ \cite{bai1998convergence,akccay2004size, kitamura2005size, bai1995membership,eising2022using}. This regression problem
does not capture the correlation between $x_t$ and the history $w_{t-1}, \dots, w_0$ in the dynamical systems.
This issue was largely overlooked in the vast literature of empirical algorithm design related to SME (for example, see \cite{lorenzen2019robust,kohler2019linear}, etc.). It is not until recently that  \cite{lu2019robust} provide the first \textit{asymptotic convergence} guarantees for SME in linear systems. However, the  \textit{non-asymptotic convergence rate} still remains open for SME in linear dynamical systems.

\begin{figure*}
	\centering
	\begin{subfigure}[b]{0.33\textwidth}
		\centering
	
        \includegraphics[width=0.9\textwidth, trim=1cm 6.5cm 2cm 7cm,clip]{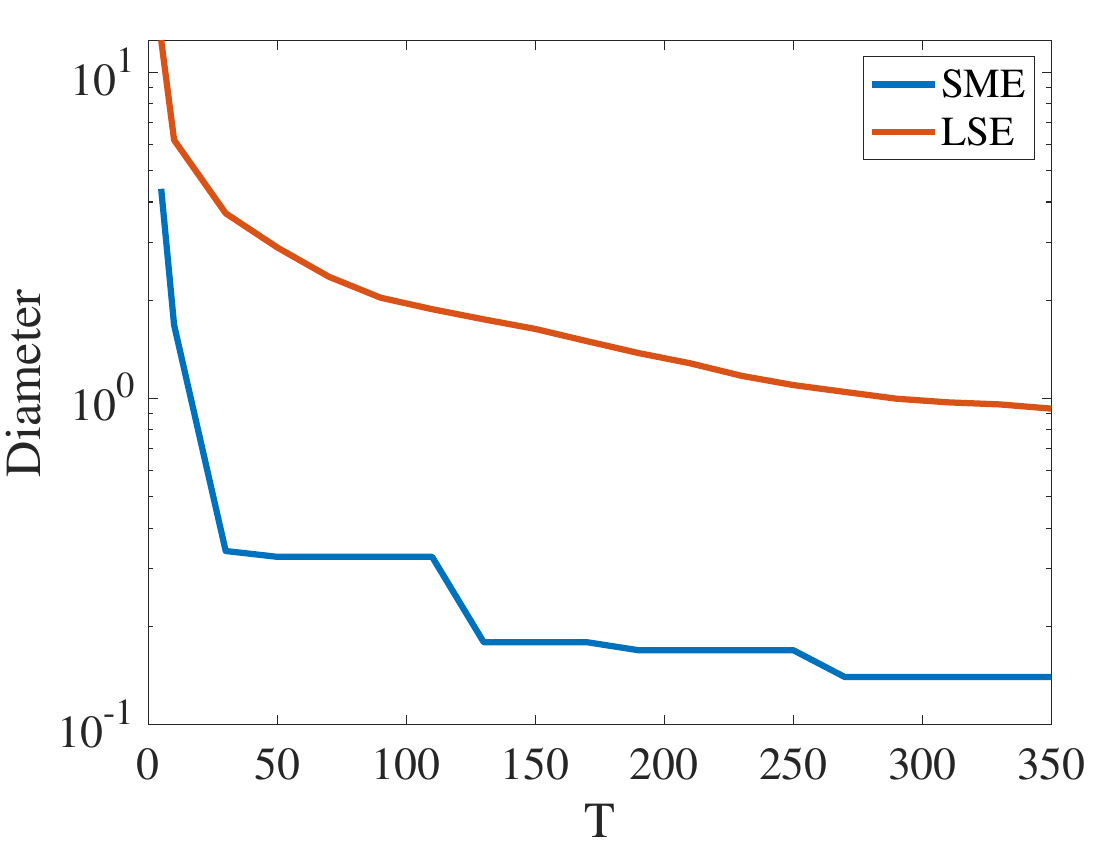}
		\caption{Uncertainty sets' diameters}
		\label{fig:diameter_d2_teaser}
	\end{subfigure}
	\hfill
	\begin{subfigure}[b]{0.33\textwidth}
		\centering
        \includegraphics[width=0.9\textwidth, trim=1cm 6.5cm 2cm 7cm,clip]{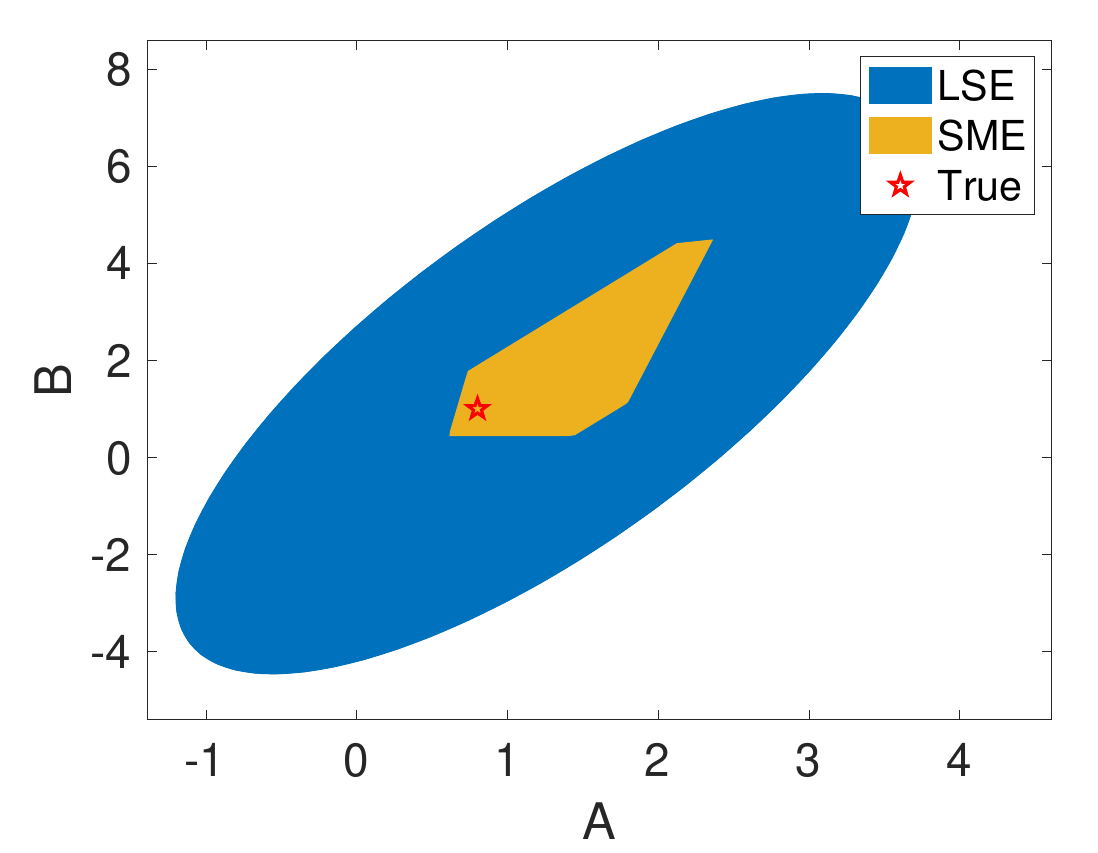}
		\caption{$T=5$}
		\label{fig:a1a3}
	\end{subfigure}
	\hfill
	\begin{subfigure}[b]{0.33\textwidth}
		\centering
        \includegraphics[width=0.9\textwidth, trim=1cm 6.5cm 2cm 7cm,clip]{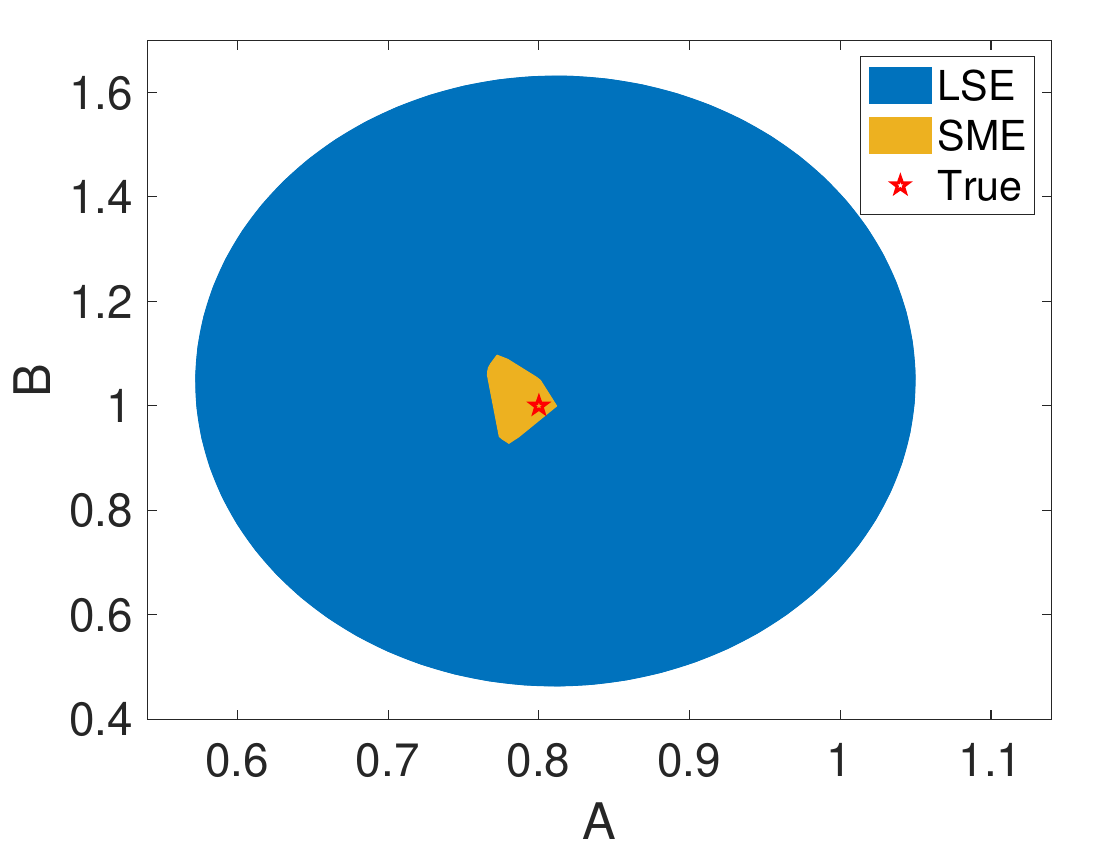}
		\caption{$T=250$}
		\label{fig:a2a4}
	\end{subfigure}
	\caption{A visualized toy example of  uncertainty set comparison between SME in \eqref{eq:membership_set} and {LSE confidence regions in \cite{simchowitz2020naive,abbasi2011regret}} for a one-dimensional system $x_{t+1}=A^*x_t +B^* u_t +w_t$, with $w_t,\, u_t \in [-1, 1]$ generated i.i.d.\ from a truncated Gaussian distribution. Detailed experiment settings are  in \Cref{append: simulation}. \nbf{Figure (a)} compares the diameters of the uncertainty sets from SME and LSE 90\% confidence bounds. \nbf{Figure (b) and (c)} visualize the the uncertainty sets  after $T=5$ and $T=250$ data points.
 }

	\label{fig: teaser}
\end{figure*}

\textbf{Contributions.} This paper tackles the open question above by providing   non-asymptotic bounds on the convergence rates of SME for  linear systems. To the best of our knowledge, this is the first convergence rate analysis of SME for dynamical systems in the literature.

We consider two scenarios in our analysis. {Firstly, when a \textit{tight} bound $\W$ on the support of $w_t$ is known,  we provide an instance-dependent convergence rate for SME. Interestingly, for several common distributions of $w_t$, SME enjoys a convergence rate $\tilde O(n_x^{1.5}(n_x+n_u)^2/T)$, which  is faster than the LSE's error bound $O(\frac{\sqrt{n_x+n_u}}{\sqrt T})$ in terms of the number of samples $T$ but is worse in terms of the dependence on state and control dimensions $n_x,n_u$. The improved convergence rate of SME with respect to $T$ is enabled by leveraging the additional boundedness property of $w_t$, which is a common assumption in robust constrained control but is not utilized in LSE's analysis.} Secondly,  when a tight bound of $w_t$ is \textit{unknown}, we introduce a UCB-SME algorithm that learns conservative upper bounds of $w_t$ from data and constructs uncertainty sets based on the conservative upper bounds. We also provide a convergence rate of UCB-SME, which has the same dependence on $T$ but has worse dependence on $n_x$ by a factor of $\sqrt{n_x}$ compared with the convergence rate with a known tight bound.

Our estimation error bound relies on a novel construction of an event sequence based on designing a sequence of stopping times. This construction, together with the BMSB condition in \cite{simchowitz2018learning}, addresses the challenge caused by the correlation between $x_t, u_t$, and the history disturbances (see the proof of Theorem \ref{thm: estimation err bdd} for more details).

Moreover, our results lay a foundation for future non-asymptotic analysis of control designs based on SME. To illustrate this, we apply  our results to  robust-adaptive model predictive control and robust system-level-synthesis (SLS) and discuss the novel non-asymptotic guarantees enabled by our convergence rates of SME.

Finally, we conduct extensive simulations to compare the numerical behaviors of SME, UCB-SME, and LSE's confidence regions, which demonstrates the  promising performance of SME and UCB-SME.

\section{Problem Formulation and Preliminaries}\label{sec: problem formulation}

\subsection{Problem Formulation} 
This paper focuses on the identification of   uncertainty sets  of  unknown system parameters in the linear dynamical system:
\begin{align}\label{equ: linear system}
	x_{t+1}=A^* x_t +B^*u_t+w_t
\end{align} 
where  $A^*, B^*$ are the unknown system parameters, $x_t \in \R^{n_x}, u_t \in \R^{n_u}$. 
For notational simplicity, we define $\theta^*=(A^*, B^*)$ by matrix concatenation and $z_t=(x_t^\top, u_t^\top)^\top \in \R^{n_z}$ by vector concatenation, where  $n_z=n_x+n_u$. Accordingly, the system \eqref{equ: linear system} can  be written as $x_{t+1}=\theta^*z_t +w_t$.

The goal of the uncertainty set 
identification problem is to determine a set $\Theta_T$ that contains the true parameters $\theta^*=(A^*, B^*)$ based on a sequence of data $\{x_t, u_t, x_{t+1}\}_{t=0}^{T-1}$. Set $\Theta_T$ is called an uncertainty set since it captures the remaining uncertainty on the system model after the revelation of the data sequence $\{x_t, u_t, x_{t+1}\}_{t=0}^{T-1}$.

Uncertainty sets play an important role in robust control, where one aims to achieve  robust constraint satisfaction \cite{lorenzen2019robust,lu2023robust},  robust objective optimization \cite{wu2013min}, and/or robust stability \cite{tu2019sample} for any model in the uncertainty set.\footnote{In addition to model uncertainties, robust control may also consider other system uncertainties, e.g., disturbances, measurement noises, etc.} 
Therefore, the diameter of the uncertainty sets heavily influences the conservativeness of robust controllers and thus the control performance. 
Formally, we define the diameter as follows. 
\begin{definition}[Diameter of a set of matrices]\label{def: set diameter}
	Consider a set $\z$ of matrices $\theta\in \R^{n_x\times n_z}$. We define the diameter of $\z$ in Frobenius norm  as $\textup{diam}(\z)=\sup_{\theta, \theta'\in \z}\|\theta-\theta'\|_F$.
\end{definition}

\subsection{Set Membership Estimation (SME)} 
In this section, we review
set membership estimation (SME), which is an uncertainty set identification method that has been studied in the control literature for decades \cite{lu2023robust,bertsekas1971control}. SME primarily focuses on systems with \textit{bounded} disturbances, i.e. $w_t \in \W$ for some bounded $\W$ for all $t\geq 0$. When $\W$ is known, SME computes an uncertainty/membership set by 
\begin{equation}  
	\label{eq:membership_set}
	\Theta_T=\bigcap_{t=0}^{T-1} \{ \hat \theta : x_{t+1} -\hat \theta z_t \in \W\}.
\end{equation}
It is straightforward to see that $\theta^*\in \Theta_T$ when $w_t\in \W$. 

The bounded disturbance assumption may seem restrictive, considering that the uncertainty set identification  based on the confidence region of LSE only requires subGaussian disturbances \cite{simchowitz2018learning}. However, in many control applications,  it is reasonable and common to assume bounded $w_t$. For example, bounded disturbances is a standard assumption in the robust constrained control literature, such as robust constrained LQR \cite{lu2023robust,lorenzen2019robust,lu2019robust,dean2019safely}, and online constrained control of linear systems \cite{liu2023online,li2021online}. This is different from unconstrained control, where unbounded subGaussian disturbances are usually considered \cite{tu2019sample}. The difference in the disturbance formulation is largely motivated by the applications: constrained control is mostly applied to safety-critical applications, where the disturbances are usually bounded. For example, in UAV and flight control, the  disturbances are mostly caused by wind gusts, and wind disturbances are bounded in practice \cite{benevides2022disturbance,narendra1986robust}. Similarly, in building thermal control, the  disturbances are caused by external heat exchanges, which are also bounded \cite{zhang2016decentralized}.

Ideally, one hopes that $\Theta_T$ converges to the singleton of the true model $\{\theta^*\}$ or at least a small neighborhood of $\theta^*$. This usually calls for additional assumptions, such as the persistent excitation property on the observed data and additional stochastic properties on $w_t$. In this paper, we consider the following assumptions to establish convergence rate bounds on the diameter of $\Theta_T$, which, to the best of our knowledge, is the first non-asymptotic guarantee of SME for linear dynamical systems. 

The first assumption formalizes the bounded disturbance assumption discussed above and introduces stochastic properties of $w_t$ for analytical purposes. 
\begin{assumption}[Bounded i.i.d.\ disturbances]\label{ass: on wt}
	The disturbances are box-constrained, $w_t\in \mathcal W:=\{w\in \R^{n_x}: \|w\|_\infty \leq w_{\max}\}$ for all $t\geq 0$.  Further, $w_t $ is i.i.d., has zero mean and positive definite covariance matrix $\Sigma_w$.
\end{assumption}

Assumption \ref{ass: on wt} is common in SME literature, e.g. \cite{akccay2004size, lu2019robust, eising2022using}.  In terms of generality, boundedness is essential for SME. The stochastic properties, such as i.i.d., zero mean, positive definite covariance, are standard in the recent learning-based control literature and allow the use of  statistical tools utilized and developed in the recent literature for non-asymptotic analysis \cite{simchowitz2018learning,li2021distributed}. Besides, it is worth mentioning that SME still works in non-stochastic settings. In particular, as long as $w_t \in \W$, even without the stochastic properties in Assumption \ref{ass: on wt}, 
the SME algorithm \eqref{eq:membership_set} still generates a valid uncertainty set that contains $\theta^*$. It is an interesting future direction to study the convergence rate of SME without assuming stochastic disturbances.

Next, we introduce the assumptions on $u_t$, which relies on the block-martingale small-ball (BMSB) condition proposed in \cite{simchowitz2018learning}. It can be shown that  the BMSB guarantees persistent excitation (PE) with high probability under proper conditions (see Proposition 2.5 in \cite{simchowitz2018learning} and \Cref{lem: bound PE c}). The PE condition requires that $z_t$ explores all directions, which is essential for system identification \cite{narendra1987persistent}.
\begin{definition}[Persistent excitation]\label{def: PE def}
    There exists $\alpha>0$ and $m \in \N+$, such that for any $t_0\geq 0$, 
    $$\frac{1}{m}\sum_{t=t_0}^{t_0+m-1}
    {\left(\begin{matrix}
        x_t\\
        u_t
    \end{matrix}\right)} (x_t^\top, u_{{t}}^\top) \succeq \alpha^2 I_{n_x+n_u}.$$
\end{definition}

\begin{definition}[BMSB \cite{simchowitz2018learning}]\label{def: bmsb}
	Consider a filtration $\{\F_t\}_{t\geq 1}$  and an $\{\F_t\}_{t\geq 1}$-adapted random process
	  $\{Z_t\}_{t\geq 1}$  in $\R^d$.
  $\{Z_t\}_{t\geq 1}$ satisfies the $(k, \Gamma_{sb},p)$-block martingale small-ball (BMSB) condition for $k>0$, a positive definite $\Gamma_{sb}$, and $0\leq p \leq 1$,  if the following  holds: for  any fixed $\lambda \in\R^d$ with $\|\lambda\|_2=1$, we have $\frac{1}{k}\sum_{i=1}^k \Pb(|\lambda^\top Z_{t+i}|\geq \sqrt{\lambda^\top\Gamma_{sb} \lambda}\mid \F_t)\geq p$  for all $t\geq 1$.
\end{definition}

The following is the assumption on $u_t$. 
\begin{assumption}[BMSB and boundedness]\label{ass: BMSB, bounded xt}
	With  filtration $\F_t=\F(w_0, \dots, w_{t-1}, z_0, \dots, z_t)$,   the $\F_t$-adapted stochastic process	$\{z_t\}_{t\geq 0}$ satisfies $(1, \sigma_{z}^2I_{n_z}, p_{z})$-BMSB for some $\sigma_z,p_z>0$. Besides, there exists $b_z\geq 0$ such that $\|z_t\|_2\leq b_z$ almost surely for all $t\geq 0$. 
\end{assumption}

Assumption \ref{ass: BMSB, bounded xt} requires $u_t$ to guarantee both BMSB and bounded $z_t$. This can be satisfied by several robust (adaptive) constrained control policies, such as robust (adaptive) model predictive control (MPC) \cite{lu2023robust,lorenzen2019robust,lu2019robust}, system level synthesis (SLS) \cite{dean2019safely}, and control barrier functions (CBF) \cite{lopez2020robust}. 
In the following,  we briefly discuss robust (adaptive) MPC as an example.  SLS and CBF can be similarly shown to satisfy Assumption \ref{ass: BMSB, bounded xt}.

\begin{example}[Robust (adaptive) MPC]\label{example: rampc} Robust   MPC is a popular method for the robust  constrained control \cite{rawlings2009model}, which aims to optimize the control objective while satisfying robust safety constraints, 
\begin{align}\label{equ: robust constraint}
z_t\in \mathbb Z_{\textup{safe}}, \,\text{where }
    x_{t+1}\!=\!\theta z_t+w_t, \forall\,\theta \!\in \!\Theta_0,  w_t\!\in \!\W,
\end{align}
where $\Theta_0$ is an initial uncertainty set known a priori, and the safety constraint $\mathbb Z_{\textup{safe}}$ is usually bounded. The robust MPC policy, denoted by $u_t=\pi_{\textup{RMPC}}(x_t; \Theta_0, \W)$,  satisfies the  constraints \eqref{equ: robust constraint} for any $\theta\in \Theta_0$. Therefore, it naturally guarantees bounded $z_t$ under the true $\theta^*$. Further, as shown in \cite{li2023non}, BMSB can be achieved by  adding a random disturbance, i.e. $u_t=\pi_{\textup{RMPC}}(x_t; \Theta_0, \W)+\eta_t$, where $\eta_t$ is  i.i.d., bounded, and has positive definite covariance. Therefore, the randomly perturbed robust MPC can satisfy Assumption \ref{ass: BMSB, bounded xt}. Robust adaptive MPC is based on the same control design, $u_t=\pi_{\textup{RMPC}}(x_t; \Theta_t, \W)$, but utilizes adaptively updated uncertainty sets $\Theta_t$. Notice that $\Theta_t$ is usually updated by SME in the literature of robust adaptive MPC \cite{lorenzen2019robust,lu2023robust,kohler2019linear}. 
\end{example}
We also note that BMSB and bounded $z_t$ with high probability are assumed in LSE literature (Theorem 2.4 \cite{simchowitz2018learning}), and bounded $z_t$ with high probability under subGaussian disturbances corresponds to bounded $z_t$ under bounded disturbances for linear systems (see bounded-input-bounded-output stability in Sec. 9 of \cite{hespanha2018linear}).

Finally, we assume that the bound $w_{\max}$ on $w_{t}$ is tight in all directions, which is common in the literature on SME  analysis \cite{bai1998convergence,akccay2004size,lu2019robust}. 
\begin{assumption}[Tight bound on $w_t$]\label{ass: tight bound on wt}
	For any $\epsilon>0$, there exists $q_w(\epsilon)>0$, such that for any $1\leq j \leq n$, we have
$$
\min(\Pb(w_t^j\leq \epsilon -w_{\max} ),\Pb(w_t^j\geq w_{\max}- \epsilon))\geq q_w(\epsilon),
$$
where $w_t^j$ denotes the $j$th entry of vector $w_t$.
Without loss of generality, we can further assume $q_w(\epsilon)$ to be non-decreasing with $\epsilon$ and $q_w(2w_{\max})=1$.\footnote{This is because $	\Pb(w_t^j\leq  \epsilon-w_{\max} )$ and $\Pb(w_t^j\geq w_{\max}- \epsilon)$ are non-decreasing with $\epsilon$, and $	\Pb(w_t^j\geq  - w_{\max})=\Pb(w_t^j\leq w_{\max})=1$ by Assumption \ref{ass: on wt}.}
\end{assumption}

In essence, Assumption \ref{ass: tight bound on wt}  requires that a \textit{hyper-cubic} $\mathcal W=\{w:\|w\|_\infty \leq w_{\max}\}$ should be \textit{tight} on the support of $w_t$ in all coordinate directions, that is, there exists a positive probability $q_w(\epsilon)$ such that $w_t$ visits an $\epsilon$-neighborhood of $w_{\max}$ and $-w_{\max}$, respectively, on all coordinates.

When the support of $w_t$ is indeed $\mathcal W=\{w:\|w\|_\infty \leq w_{\max}\}$, many common distributions enjoy $q_w(\epsilon)\geq \Omega(\epsilon)$.\footnote{The $\Omega(\cdot)$ notation is the lower bound version of $O(\cdot)$.} For example, for the uniform distribution on $\W$, we have $q_w(\epsilon)=\frac{\epsilon}{2w_{\max}}$; for the truncated Gaussian distribution with zero mean,  $\sigma_w^2 I_n$ covariance, and truncated region $\W$, we have  $q_w(\epsilon)=\frac{\epsilon}{2w_{\max}\sigma_w} \exp(\frac{-w_{\max}^2}{2\sigma_w^2})$; and for the uniform distribution on the boundary of $\W$ (a generalization of Rademacher distribution), we have $q_w(\epsilon)\geq\frac{1}{2n_x}\geq \Omega(\epsilon)$ (see \Cref{append: qw example} for more details). 

However, knowing a tight bound on the support of $w_t$ can be challenging in practice. Therefore, we  will discuss how to relax this assumption and learn a tight bound from data in \Cref{sec: unknow wmax}.

Further, the requirement of a hyper-cubic $\mathcal W$ can be restrictive because different entries of disturbances may have different magnitudes, resulting in a hyper-rectangular support that violates Assumption \ref{ass: tight bound on wt}. Our follow-up work \cite{xu2024convergence}  relaxes  this assumption and generalizes the results in this paper.

\section{Set Membership Convergence Analysis}
\label{sec:analysis}
\subsection{Convergence Rate of SME with Known $w_{\max}$}
We now present the main result (\Cref{thm: estimation err bdd}) of this paper, which is a non-asymptotic bound on the estimation error of SME given bounded i.i.d. stochastic disturbances.   
\begin{theorem}[Convergence rate of SME]\label{thm: estimation err bdd}
	For any $m>0$ any $\delta>0$, when $T>m$, we have
	\begin{align*}
		\Pb(\diam(&\Theta_T)>\delta)\leq  \underbrace{\frac{T}{m}  \tilde O(n_z^{2.5}) a_2^{n_z} \exp(-a_3 m)}_{\T_1}\\
	&	+ \underbrace{\tilde O((n_xn_z)^{2.5})a_4^{n_xn_z}\left(1-q_w\left(\frac{a_1\delta}{4\sqrt{n_x}}\right)\right)^{\ceil{T/m}} }_{\T_2(\delta)}
	\end{align*}
	where $a_1 = \frac{\sigma_{z} p_{z}}{4}$, $a_2=\frac{64 b_z^2}{\sigma_{z}^2 p_{z}^2}$, $a_3= \frac{p_{z}^2}{8}$, $a_4=\frac{4b_z \sqrt{n_x}}{a_1}$,  $p_z, \sigma_z, b_z$ are  defined in Assumption \ref{ass: BMSB, bounded xt},  $\ceil{\cdot}$ denotes the ceiling function, and $\diam(\cdot)$ is defined in Definition \ref{def: set diameter}, the factors hidden in $\tilde O(\cdot)$ are provided in Appendix \ref{append: precise upper bdd}. 
\end{theorem}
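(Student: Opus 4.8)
The plan is to turn $\{\diam(\Theta_T)>\delta\}$ into a statement about how often the disturbances land near the boundary of $\W$. Fix $\hat\theta\in\Theta_T$, set $\Delta=\theta^*-\hat\theta$ with rows $\Delta_1,\dots,\Delta_{n_x}$, and combine $x_{t+1}=\theta^*z_t+w_t$ with $x_{t+1}-\hat\theta z_t\in\W$ to get $|\langle\Delta_j,z_t\rangle+w_t^j|\le w_{\max}$ for all $j$ and all $t<T$. Hence, if at some time $t$ the disturbance is extreme \emph{in the direction matching the sign of} $\langle\Delta_j,z_t\rangle$ — i.e.\ $\mathrm{sign}(\langle\Delta_j,z_t\rangle)\,w_t^j\ge w_{\max}-\epsilon$ — while $|\langle\Delta_j,z_t\rangle|>\epsilon$, then $|\langle\Delta_j,z_t\rangle+w_t^j|>w_{\max}$, a contradiction, so this $\hat\theta$ cannot lie in $\Theta_T$. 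Since $\theta^*\in\Theta_T$, the event $\{\diam(\Theta_T)>\delta\}$ forces the existence of $\hat\theta\in\Theta_T$ with $\|\Delta\|_F>\delta/2$, hence of a row $j$ with $\|\Delta_j\|_2>\delta/(2\sqrt{n_x})$. It therefore suffices to show that, with high probability, no such $\Delta$ avoids the above contradiction for all $t<T$.

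To make the quantifier ``for every $\Delta$'' tractable — and, crucially, to expose a \emph{deterministic} direction to which the BMSB condition of \Cref{ass: BMSB, bounded xt} can be applied — I would fix a net $\mathcal N$ of the Frobenius unit sphere of $\R^{n_x\times n_z}$ at scale $\approx a_1/(b_z\sqrt{n_x})$, so that $|\mathcal N|\le a_4^{n_xn_z}$ up to polynomial factors. Given a candidate $\Delta$, pick $v_0\in\mathcal N$ approximating the direction $\Delta/\|\Delta\|_F$, let $j$ index a large row of $v_0$, and set $\lambda_0=(v_0)_j/\|(v_0)_j\|_2$. Using $\|z_t\|_2\le b_z$, one checks that whenever the \emph{aligned-and-extreme} event $\{|\langle\lambda_0,z_t\rangle|\ge a_1\}\cap\{\mathrm{sign}(\langle\lambda_0,z_t\rangle)\,w_t^j\ge w_{\max}-\tfrac{a_1\delta}{4\sqrt{n_x}}\}$ — stated for the \emph{fixed} pair $(\lambda_0,j)$ — occurs at some $t$, the contradiction above holds for $\Delta$ at that $t$. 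A union bound over $\mathcal N$ and over the $n_x$ row indices then reduces the problem to bounding, for a fixed deterministic $(\lambda_0,j)$, the probability that the aligned-and-extreme event never fires for $t=0,\dots,T-1$; this is the origin of the $\tilde O((n_xn_z)^{2.5})a_4^{n_xn_z}$ prefactor in $\T_2$.

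For the core estimate I would split $\{0,\dots,T-1\}$ into $\ceil{T/m}$ consecutive blocks $B_k$ of length (essentially) $m$. On each $B_k$, \Cref{ass: BMSB, bounded xt} together with the BMSB-to-PE lemma (\Cref{lem: bound PE c}, in the spirit of Proposition~2.5 of \cite{simchowitz2018learning}) gives $\frac1m\sum_{t\in B_k}z_tz_t^\top\succeq a_1^2 I_{n_z}$ except on an event of probability $\tilde O(n_z^{2.5})a_2^{n_z}\exp(-a_3 m)$; a union bound over the $\ceil{T/m}$ blocks produces exactly $\T_1$. On the PE event for $B_k$, the stopping time $\tau_k:=\min\{t\in B_k:|\langle\lambda_0,z_t\rangle|\ge a_1\}$ lies in $B_k$, and the sign $s_k:=\mathrm{sign}(\langle\lambda_0,z_{\tau_k}\rangle)$ is $\F_{\tau_k}$-measurable. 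Since $w_t$ is independent of $\F_t$ for each fixed $t$ and $\{\tau_k=t\}\in\F_t$, the freshness of $w_{\tau_k}$ survives the data-dependent choice of $\tau_k$: by \Cref{ass: tight bound on wt}, $\Pb(s_k w_{\tau_k}^j\ge w_{\max}-\epsilon\mid\F_{\tau_k})\ge q_w(\epsilon)$ on $\{\tau_k\in B_k\}$, with $\epsilon=\tfrac{a_1\delta}{4\sqrt{n_x}}$. Consequently the event ``no aligned-and-extreme time in $B_k$ and PE holds on $B_k$'' is contained in the $\F_{\tau_k}$-measurable set $\{\tau_k\in B_k\}\cap\{s_k w_{\tau_k}^j< w_{\max}-\epsilon\}$, whose conditional probability given the history before $B_k$ is at most $1-q_w(\epsilon)$; chaining these bounds over the $\ceil{T/m}$ blocks — peeling from the last block and using that $\tau_k$ precedes the start of $B_{k+1}$, so the partial products are measurable w.r.t.\ the relevant $\sigma$-fields — yields the factor $(1-q_w(\tfrac{a_1\delta}{4\sqrt{n_x}}))^{\ceil{T/m}}$. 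Combining with the net/row union bound gives $\T_2(\delta)$, and adding $\T_1$ finishes the proof.

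The main obstacle is exactly the correlation between $z_t$ and the past disturbances $w_0,\dots,w_{t-1}$, which forbids treating ``$z_t$ aligned with the bad direction'' and ``$w_t$ extreme'' as independent events across time; the stopping-time construction above is what makes a clean block-by-block argument possible, since $\{\tau_k=t\}\in\F_t$ while $w_t\perp\F_t$. A secondary subtlety to handle with care is that the block-PE event is \emph{not} $\F_{\tau_k}$-measurable, which is why one first passes to the $\F_{\tau_k}$-measurable superset of the per-block failure event before chaining conditional probabilities. The remaining work is bookkeeping of the polynomial-in-dimension factors through the net and the PE lemma, as collected in Appendix~\ref{append: precise upper bdd}.
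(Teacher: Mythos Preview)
Your proposal is correct and follows the same overall architecture as the paper: translate $\{\diam(\Theta_T)>\delta\}$ to the error set, discretize the Frobenius unit sphere by a net of size $\tilde O(a_4^{n_xn_z})$, split the horizon into $\ceil{T/m}$ blocks, invoke the BMSB-to-PE estimate block-by-block to produce $\T_1$, and on the PE event use a per-block stopping time so that the relevant disturbance coordinate at the stopping time is conditionally fresh, yielding the geometric factor $(1-q_w(a_1\delta/4\sqrt{n_x}))^{\ceil{T/m}}$ after chaining.

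The one genuine difference is in how the stopping criterion is set up. The paper, for each net point $\gamma_i$, stops at the first in-block time with $\|\gamma_i z_t\|_\infty\ge a_1/\sqrt{n_x}$ and then reads off the \emph{data-dependent} coordinate $j_{i,t}$ and sign $b_{i,t}$ achieving the max; both are $\F_t$-measurable, so the conditional-independence argument still goes through. You instead fix, for each net point $v_0$, a \emph{deterministic} row $j$ (a largest row of $v_0$) and unit direction $\lambda_0=(v_0)_j/\|(v_0)_j\|_2$, and stop at the first in-block time with $|\langle\lambda_0,z_t\rangle|\ge a_1$; only the sign $s_k$ is data-dependent. Your variant is slightly cleaner at the conditioning step and yields the same constants (since $\|(v_0)_j\|_2\ge 1/\sqrt{n_x}$ and the net scale $1/a_4$ absorbs the approximation error exactly as in the paper). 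Your remark that the block-PE event is not $\F_{\tau_k}$-measurable and must first be replaced by the $\F_{\tau_k}$-measurable superset $\{\tau_k\in B_k\}$ before chaining is on point; the paper handles the analogous issue by decomposing over $\{L_{i,k}=l\}$. The extra ``union over the $n_x$ row indices'' you mention is unnecessary since $j$ is determined by $v_0$, but it is harmless within the $\tilde O$ notation.
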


Theorem \ref{thm: estimation err bdd} provides an upper bound on the ``failure'' probability of SME, i.e., the probability that the diameter of the uncertainty set is larger than $\delta$. In this bound, $\T_1$  decays exponentially with  $m$, so for any small $\epsilon>0$, $m$ can be chosen such that $\T_1\leq \epsilon$, which indicates $m\geq O(n_z+\log T+\log(1/\epsilon))$. For any $\delta>0$, $\T_2(\delta)$ decays exponentially with the number of data points $T$ and involves a distribution-dependent function $q_w(\cdot)$, which characterizes how likely it is for $w_t$ to visit the boundary of $\W$ as defined in \Cref{ass: tight bound on wt}. To ensure  the probability upper bound in Theorem \ref{thm: estimation err bdd} to be less than 1, one can choose $m=O(\log T)$ and a large enough $T$ such that $T\geq O(m)=O(\log(T))$.  If $w_t$ is more likely to visit the boundary, (a larger $q_w(\cdot)$), then SME is less likely to generate an uncertainty set with a diameter bigger than $\delta$.

\textbf{Estimation error bounds when $q_w(\epsilon)=\Omega(\epsilon)$.} To provide intuition for $\T_2(\delta)$ and discuss the estimation error bound in \Cref{thm: estimation err bdd} more explicitly, we consider distributions satisfying $q_w(\epsilon)=\Omega(\epsilon)$ for all $\epsilon>0$. Notice that several common distributions satisfy this additional requirement, such as  uniform distribution  and  truncated Gaussian distribution as discussed after Assumption \ref{ass: tight bound on wt}. 
\begin{corollary}[Estimation error bound when $q_w(\epsilon)=\Omega(\epsilon)$]\label{cor: uniform dist est. bdd}

	For any $\epsilon>0$, let $$m\geq O(n_z+\log T+\log(1/\epsilon))$$ in the following.\footnote{A detailed formula is provided in Appendix \ref{sec:cor1}.}
	If $w_t$ is generated i.i.d. by a distribution satisfying $q_w(\epsilon)=\Omega(\epsilon)$ for all $\epsilon>0$, 
then with probability at least $1-2\epsilon$, for any $\hat \theta_T\in\Theta_T$, we have
	$$\|\hat \theta_T -\theta^*\|_F\leq \diam(\Theta_T) \leq \tilde O\left(\frac{n_x^{1.5}(n_x+n_u)^{2}}{ T}\right). $$

\end{corollary}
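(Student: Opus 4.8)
The plan is to deduce Corollary~\ref{cor: uniform dist est. bdd} from Theorem~\ref{thm: estimation err bdd} by choosing the free parameter $m$ and the target accuracy $\delta$ so that each of $\T_1$ and $\T_2(\delta)$ is at most $\epsilon$, and then taking a union bound. First, to control $\T_1 = \frac{T}{m}\tilde O(n_z^{2.5})a_2^{n_z}\exp(-a_3 m)$, take logarithms: $\T_1\le\epsilon$ is implied by $a_3 m \ge \log(T/m) + n_z\log a_2 + \log\tilde O(n_z^{2.5}) + \log(1/\epsilon)$, which, since $a_2,a_3$ are problem constants and $\log\tilde O(n_z^{2.5})=\tilde O(1)$, holds for any $m \ge O(n_z + \log T + \log(1/\epsilon))$ (the explicit constant is recorded in Appendix~\ref{sec:cor1}). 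Fix $m$ to be the smallest such value; treating $\log(1/\epsilon)$ as a constant, $m = \tilde O(n_z)$.

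Next I bound $\T_2(\delta)$. The hypothesis $q_w(\epsilon')=\Omega(\epsilon')$ furnishes a constant $c>0$ with $q_w(\epsilon')\ge c\,\epsilon'$ for all $\epsilon'>0$ in the relevant (small) range, so $q_w\!\big(\tfrac{a_1\delta}{4\sqrt{n_x}}\big)\ge \tfrac{c\,a_1\delta}{4\sqrt{n_x}}$. Using $1-x\le e^{-x}$ and $\ceil{T/m}\ge T/m$ gives $\T_2(\delta)\le \tilde O((n_xn_z)^{2.5})\,a_4^{n_xn_z}\exp\!\big(-\tfrac{c\,a_1}{4\sqrt{n_x}}\,\delta\,\tfrac{T}{m}\big)$. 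Taking logarithms, $\T_2(\delta)\le\epsilon$ holds once $\delta \ge \tfrac{4\sqrt{n_x}\,m}{c\,a_1\,T}\big(n_xn_z\log a_4 + \log\tilde O((n_xn_z)^{2.5}) + \log(1/\epsilon)\big)$. Since $a_4 = 4b_z\sqrt{n_x}/a_1$, we have $\log a_4 = O(\log n_x)$, so the parenthesis is $\tilde O(n_xn_z)$; together with $m=\tilde O(n_z)$ and the explicit $\sqrt{n_x}$, the right-hand side equals $\tilde O\!\big(\sqrt{n_x}\cdot n_z\cdot n_xn_z / T\big) = \tilde O\!\big(n_x^{1.5}n_z^2/T\big) = \tilde O\!\big(n_x^{1.5}(n_x+n_u)^2/T\big)$. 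Set $\delta$ equal to this value.

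With these choices, and for $T$ in the regime $T>m$ (the only regime of interest, since $m=O(\log T)$ for fixed dimensions), Theorem~\ref{thm: estimation err bdd} gives $\Pb(\diam(\Theta_T)>\delta)\le\T_1+\T_2(\delta)\le 2\epsilon$, i.e.\ with probability at least $1-2\epsilon$, $\diam(\Theta_T)\le\delta=\tilde O(n_x^{1.5}(n_x+n_u)^2/T)$; and because $w_t\in\W$ always holds, $\theta^*\in\Theta_T$, so any $\hat\theta_T\in\Theta_T$ obeys $\|\hat\theta_T-\theta^*\|_F\le\diam(\Theta_T)$ by Definition~\ref{def: set diameter}, completing the proof. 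The only delicate point is the bookkeeping of dimension dependence: the factors $a_2^{n_z}$ and $a_4^{n_xn_z}$ are exponential in the dimensions, but after taking logarithms they contribute only $\tilde O(n_z)$ and $\tilde O(n_xn_z)$, which is exactly what lets a polynomial-in-dimension rate survive; one must also check the self-consistency of the choice of $m$ (large enough for the $\T_1$ bound, yet appearing in the denominator $T/m$ of $\T_2$, hence kept as small as possible) and verify that substituting $m=O(n_z+\log T+\log(1/\epsilon))$ into the $\T_2$ bound collapses $\sqrt{n_x}\cdot m\cdot n_xn_z$ down to $n_x^{1.5}n_z^2$ up to logarithmic factors rather than something larger.
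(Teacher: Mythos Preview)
Your proposal is correct and follows essentially the same approach as the paper's proof: choose $m$ by taking logarithms of $\T_1$ to absorb the $a_2^{n_z}$ factor, then solve $\T_2(\delta)\le\epsilon$ for $\delta$ using $q_w(\epsilon')\ge c\epsilon'$ and the inequality $1-x\le e^{-x}$ (the paper uses the equivalent $1-y\le -\log y$), and finally note $\theta^*\in\Theta_T$. The dimension bookkeeping matches the paper's line by line.
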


Corollary \ref{cor: uniform dist est. bdd} indicates that the estimation error  of any point in the uncertainty set $\Theta_T$ can be bounded by $ \tilde O\left(\frac{n_x^{1.5}(n_x+n_u)^{2}}{ T}\right)$ when $q_w(\epsilon)\geq \Omega(\epsilon)$. 

\textbf{Dynamical  systems without control inputs.} SME also applies to dynamical systems with no control inputs, i.e., $x_{t+1}=A^* x_t+w_t$, where the uncertainty set of $A^*$ can be computed by $\mathbb A_T=\bigcap_{t=0}^{T-1} \{ \hat A : \|x_{t+1} -\hat A x_t \|_\infty \leq w_{\max}\}.$ Its convergence rate can be similarly derived via the proof of Theorem \ref{thm: estimation err bdd}.

\begin{corollary}[Convergence rate with $B^*=0$ (informal)]\label{cor: estimation err bdd for B=0}
	For stable $A^*$,  for any $m>0,\delta>0,T>m$, we have
\begin{align*}
    \Pb(\diam(&\mathbb A_{ {T}})>\delta)\leq \frac{T}{m} \tilde O(n_x^{2.5}) a_2^{n_x}\exp(-a_3m)\\
		&+\tilde O(n_x^5)a_4^{n_x^2}(1-q_w(\frac{a_1\delta}{4\sqrt n_x}))^{\ceil{T/m}}  
\end{align*}

 Consequently, when $q_w(\epsilon)=\Omega(\epsilon)$, e.g. uniform or truncated Gaussian, we have
 $\diam(\mathbb A_{ {T}})\leq \tilde O(n_x^{3.5}/T)$. 
\end{corollary}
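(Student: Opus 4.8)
The plan is to mirror the proof of Theorem \ref{thm: estimation err bdd} with $z_t$ replaced by $x_t$ and $n_z$ replaced by $n_x$. Observe that the model $x_{t+1}=A^*x_t+w_t$ is the special case of \eqref{equ: linear system} with $B^*=0$ and no control, so $\mathbb A_T$ in the corollary is exactly the set $\Theta_T$ from \eqref{eq:membership_set} instantiated with $z_t = x_t$ and $\W=\{w:\|w\|_\infty\le w_{\max}\}$. Thus the first step is to verify that Assumptions \ref{ass: on wt} and \ref{ass: tight bound on wt} carry over verbatim, and that the role of Assumption \ref{ass: BMSB, bounded xt} is played by: (i) a BMSB condition on $\{x_t\}$, and (ii) almost-sure boundedness $\|x_t\|_2\le b_x$. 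For (ii), since $A^*$ is stable (Schur), the noise-driven recursion $x_{t+1}=A^*x_t+w_t$ with $\|w_t\|_\infty\le w_{\max}$ is bounded-input-bounded-output stable, so $\|x_t\|_2\le b_x$ for some finite $b_x$ depending on $A^*$ and $w_{\max}$ (this is the BIBO argument cited after Assumption \ref{ass: BMSB, bounded xt}). For (i), the BMSB condition with some parameters $(1,\sigma_x^2 I_{n_x},p_x)$ follows from the positive definite covariance $\Sigma_w$ and stability of $A^*$, exactly as the standard arguments in \cite{simchowitz2018learning} (the one-step conditional distribution of $x_{t+1}$ given $\F_t$ is $A^*x_t + w_t$, and $w_t$ has a nondegenerate bounded distribution, giving a small-ball lower bound uniform in the conditioning).

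Once these two facts are in place, the main estimation-error argument of Theorem \ref{thm: estimation err bdd} applies with no structural change: the stopping-time/event-sequence construction that converts the BMSB condition into a high-probability persistent-excitation bound over blocks of length $m$ only uses that $\{x_t\}$ is $\F_t$-adapted, bounded, and BMSB; the subsequent argument that, conditioned on PE holding, the probability that $\diam(\mathbb A_T)>\delta$ decays geometrically in $\lceil T/m\rceil$ at rate $1-q_w(a_1\delta/(4\sqrt{n_x}))$ only uses Assumption \ref{ass: tight bound on wt} and elementary geometry of the intersected slabs $\{\hat A:\|x_{t+1}-\hat A x_t\|_\infty\le w_{\max}\}$. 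Concretely, I would: restate the block decomposition of $\{0,\dots,T-1\}$ into $\lceil T/m\rceil$ windows; on each window invoke the PE lower bound $\frac1m\sum x_tx_t^\top\succeq \alpha^2 I_{n_x}$ with $\alpha = a_1/\sqrt{\cdot}$ (tracking that here the relevant dimension is $n_x$, not $n_z$); then show that if some direction of $\mathbb A_T$ has width $>\delta$, then on each window there is positive probability $q_w(\cdot)$ that the fresh disturbances "pin down" that direction, contributing the $(1-q_w(\cdot))^{\lceil T/m\rceil}$ factor; the dimensional prefactors $\tilde O(n_x^{2.5})$, $a_2^{n_x}$ and $\tilde O(n_x^5)$, $a_4^{n_x^2}$ come from union bounds over an $\epsilon$-net of directions in $\R^{n_x}$ (for $\T_1$) and of matrices in $\R^{n_x\times n_x}$ (for $\T_2$) — note $n_x\times n_x$ here replaces the $n_x\times n_z$ net of the theorem, which is why the exponent in $a_4$ becomes $n_x^2$ and the polynomial prefactor becomes $\tilde O(n_x^5)=\tilde O((n_x\cdot n_x)^{2.5})$.

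For the final sentence, substituting $q_w(\epsilon)=\Omega(\epsilon)$ and optimizing $m=O(\log T)$ exactly as in Corollary \ref{cor: uniform dist est. bdd} gives $\diam(\mathbb A_T)\le \tilde O(n_x^{3.5}/T)$; the exponent $3.5$ arises because the "$n_x^{1.5}\cdot n_z^2$" of Corollary \ref{cor: uniform dist est. bdd} specializes to $n_x^{1.5}\cdot n_x^2$. I expect the only genuinely non-routine step to be checking that the BMSB parameters $(\sigma_x,p_x)$ and the boundedness constant $b_x$ are well-defined for an arbitrary Schur-stable $A^*$ without any additional excitation input — unlike the controlled case where an injected perturbation $\eta_t$ guarantees BMSB, here one must rely entirely on the process noise $w_t$ exciting all directions through the (possibly poorly conditioned) stable dynamics, so some care with the dependence of $\sigma_x, p_x$ on the spectral properties of $A^*$ and on $\Sigma_w$ is needed; everything downstream is a faithful transcription of the Theorem \ref{thm: estimation err bdd} argument with $n_z\mapsto n_x$.
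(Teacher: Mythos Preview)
Your proposal is correct and takes essentially the same approach as the paper: the paper's proof simply states that the argument of Theorem \ref{thm: estimation err bdd} and Corollary \ref{cor: uniform dist est. bdd} applies verbatim with $z_t\mapsto x_t$ and $n_z\mapsto n_x$, after verifying boundedness $\|x_t\|_2\le b_x$ from $(\kappa,\rho)$-stability and the BMSB condition from \cite{dean2019safely} (with explicit constants $\sigma_x=\sqrt{\lambda_{\min}(\Sigma_w)/2}$, $p_x=1/192$, $b_x=\kappa\|x_0\|_2+\kappa\sqrt{n_x}/\rho$). Your dimensional bookkeeping and the reduction of the ``non-routine step'' to checking BMSB for the autonomous process are exactly what the paper does, and your anticipated concern about the BMSB parameters is resolved in the paper by direct citation rather than a fresh argument.
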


Note that \cite{simchowitz2018learning} have shown a lower bound $\Omega(\sqrt n_x/\sqrt T)$  for the estimation of linear systems with no control inputs when $w_t$ follows an (unbounded) Gaussian distribution. Interestingly, Corollary \ref{cor: estimation err bdd for B=0} reveals that, for some bounded-support distributions of $w_t$, e.g. Uniform and truncated Gaussian, SME is able to converge at a faster rate $\tilde O(1/T)$ in terms of the sample size $T$. This does not conflict with the lower bound in \cite{simchowitz2018learning} because SME's rate only holds for bounded disturbances. In fact, from \eqref{eq:membership_set}, it is straightforward to see that SME does not even converge under Gaussian disturbances. Therefore, SME is mostly useful in applications with bounded disturbances, e.g. robust constrained control, safety-critical systems, etc., while LSE's confidence regions are preferred for unbounded disturbances.

Lastly, Corollary \ref{cor: estimation err bdd for B=0} shows that SME's convergence rate has a poor dependence with respect to $n_x$: $\tilde O(n_x^{3.5})$. This is likely a proof artifact because we do not observe such poor dimension scaling in simulation (see \Cref{fig:dimension_gaussian}). It is left as future work to refine the dimension dependence. 

\subsection{SME with Unknown $w_{\max}$}\label{sec: unknow wmax} 

Next, we discuss the convergence rates of SME without knowing a tight bound $w_{\max}$ in three steps: 1) only knowing a conservative upper bound of $w_{\max}$, 2)  learning $w_{\max}$ from data, and 3) a variant of SME that converges without prior knowledge of    $w_{max}$.

\textbf{1) SME with a conservative upper bound for $w_{\max}$.} In many practical scenarios, it is easier to obtain an over-estimation of the range of the disturbances instead of a tight upper bound, i.e., $\hat w_{\max}\geq w_{\max}$. 
In this case, we can  show that the uncertainty set converges to a small neighborhood around $\theta^*$ of size $O(\sqrt{n_x}(\hat w_{\max}-w_{\max}))$ at the same convergence rate as \Cref{thm: estimation err bdd}.

\begin{theorem}[Convservative bound on $w_{\max}$]\label{thm: loose bound} When $w_{\max}$ in Assumption \ref{ass: tight bound on wt} is unknown but an upper bound $\hat w_{\max} \geq w_{\max}$ is known, consider the following SME algorithm: 
    $$\hat \Theta_T(\hat w_{\max})=\bigcap_{t=0}^{T-1} \{ \hat \theta : \|x_{t+1} -\hat \theta z_t \|_\infty\leq \hat w_{\max}\},$$

  For any $m>0$, $\delta>0$, $T>m$, we have
    \begin{align*}
\Pb(\diam(\hat\Theta_T)\!>\!\delta+a_5\sqrt{n_x}(\hat w_{\max}-w_{\max}))\leq \T_1\!+\!\T_2(\delta)
	\end{align*}
 where $a_5=\frac{4}{a_1}$, $\T_1, \T_2(\delta)$ are defined in Theorem \ref{thm: estimation err bdd}.
\end{theorem}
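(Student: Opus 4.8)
The plan is to obtain this as a corollary of the \emph{proof} (not merely the statement) of \Cref{thm: estimation err bdd}, by tracking exactly where the known bound $w_{\max}$ enters and replacing it by $\hat w_{\max}$. The starting point is the elementary reformulation of membership: since $x_{t+1}=\theta^* z_t + w_t$, a parameter $\hat\theta$ lies in $\hat\Theta_T(\hat w_{\max})$ if and only if $\|w_t + (\theta^* - \hat\theta)z_t\|_\infty \le \hat w_{\max}$ for all $0\le t\le T-1$. Writing $\Delta := \theta^* - \hat\theta$, this says that whenever the $j$-th coordinate $w_t^j$ of the disturbance lands within $\epsilon$ of the boundary value $w_{\max}$ (resp.\ $-w_{\max}$), the $j$-th coordinate of $\Delta z_t$ is forced to satisfy $(\Delta z_t)^j \le \hat w_{\max} - w_t^j \le (\hat w_{\max}-w_{\max}) + \epsilon$ (resp.\ $(\Delta z_t)^j \ge -(\hat w_{\max}-w_{\max}) - \epsilon$). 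Compared with the tight-bound case underlying \Cref{thm: estimation err bdd}, the \emph{only} change is that this per-step certificate on $(\Delta z_t)^j$ degrades from $|(\Delta z_t)^j|\le \epsilon$ to $|(\Delta z_t)^j|\le \epsilon + (\hat w_{\max}-w_{\max})$.

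Next I would re-run the argument of \Cref{thm: estimation err bdd} essentially verbatim: partition $\{0,\dots,T-1\}$ into $\lceil T/m\rceil$ blocks of length $m$; on each block use the $(1,\sigma_{z}^2 I_{n_z},p_{z})$-BMSB condition of \Cref{ass: BMSB, bounded xt} together with the stopping-time construction to find, with the stated probability, a time step whose $z_t$ has a large component along a prescribed (net) direction $\lambda\in\mathbb{S}^{n_z-1}$; and at that step invoke \Cref{ass: tight bound on wt} (probability $q_w(\epsilon)$) to certify the coordinate bound above. Crucially, the two random events that produce $\T_1$ and $\T_2(\delta)$ — the BMSB/stopping-time event for $\{z_t\}$ and the boundary-visiting event for $\{w_t\}$ — do not reference $\hat w_{\max}$ at all, so the failure probability remains $\T_1+\T_2(\delta)$. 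On the complementary good event, the original proof converts the per-step bounds $|(\Delta z_t)^j|\le\epsilon$ into $\diam(\Theta_T)\le\delta$ through a chain of inequalities that is \emph{linear} in that right-hand side: an $\epsilon$-net / full-rank argument over $\mathbb{S}^{n_z-1}$ (contributing the $\tilde O((n_xn_z)^{2.5})a_4^{n_xn_z}$ and the $q_w$ argument $a_1\delta/(4\sqrt{n_x})$), a $\sqrt{n_x}$ loss from aggregating the $n_x$ rows in Frobenius norm, and a $1/(\sigma_{z}p_{z})$-type factor from inverting the BMSB lower bound, which is exactly $a_1=\sigma_{z}p_{z}/4$. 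Replacing $\epsilon$ by $\epsilon+(\hat w_{\max}-w_{\max})$ throughout therefore replaces the resulting diameter bound $\delta$ by $\delta + a_5\sqrt{n_x}(\hat w_{\max}-w_{\max})$ with $a_5=4/a_1$, which is precisely the claimed inflation.

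The main obstacle — and essentially the only place requiring care — is verifying that the slack $\hat w_{\max}-w_{\max}$ propagates \emph{purely additively} and with the advertised constant $a_5\sqrt{n_x}$: one must check that it enters only through the per-step coordinate certificate (no multiplicative coupling with $b_z$, $\sigma_{z}$, or the net granularity), that the $\sqrt{n_x}$ is the same factor already incurred in passing from coordinatewise/rowwise bounds to the Frobenius-norm diameter, and that the triangle-inequality step used to pass from ``every $\hat\theta\in\hat\Theta_T$ is close to $\theta^*$'' to a bound on $\diam(\hat\Theta_T)$ does not introduce an extra factor of $2$ beyond what is already absorbed in $a_5$. A secondary bookkeeping point is that the $\delta$ inside $\T_2(\delta)$ and the $\delta$ on the left-hand side must stay the same symbol, i.e.\ the $(\hat w_{\max}-w_{\max})$ term is a genuinely separate deterministic offset rather than something to be folded into the argument of $q_w(\cdot)$; this is immediate once one notes it arises from the deterministic inequality $w_t^j\le w_{\max}$ (valid by \Cref{ass: on wt}), not from any probabilistic event.
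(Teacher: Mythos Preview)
Your proposal is correct and follows essentially the same approach as the paper: re-run the proof of \Cref{thm: estimation err bdd} with $\hat w_{\max}$ in place of $w_{\max}$ in the membership constraint, observe that the probability-controlling events (the PE event $\Ec_2$ and the stopping-time events $G_{i,k}$) are unchanged, and note that the slack $\hat w_{\max}-w_{\max}$ enters only additively in the threshold, yielding the extra $a_5\sqrt{n_x}(\hat w_{\max}-w_{\max})$ in the diameter. The paper makes this explicit by defining $\epsilon_0=\frac{4\sqrt{n_x}}{a_1}(\hat w_{\max}-w_{\max})$ upfront and checking that $\frac{a_1(\delta+\epsilon_0)}{4\sqrt{n_x}}-(\hat w_{\max}-w_{\max})=\frac{a_1\delta}{4\sqrt{n_x}}$, so the same $G_{i,k}$ arise; this is exactly the ``purely additive propagation'' you flagged as the main point to verify.
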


\textbf{2) Learning $w_{\max}$.}
When $w_{\max}$ is not accurately known, we can  try to learn it from the data. Let's first consider the   learning algorithm studied in \cite{bai1998convergence}.
\begin{align}\label{equ: define wbarmax}
	\bar w_{\max}^{(T)} &= \min_{\theta}\max_{0\leq t \leq T-1} \|x_{t+1}-\theta z_t\|_\infty.
\end{align}
{Though algorithm \eqref{equ: define wbarmax} cannot provide an upper bound on $w_{\max}$ under finite samples because $\bar w_{\max}^{(T)}\leq w_{\max}$ for finite $T$,\footnote{If SME does not use an upper bound on $w_{\max}$, the generated uncertainty set may not contain the true parameter $\theta^*$.} it can be shown that $\bar w_{\max}^{(T)}$ converges to $w_{\max}$ as $T\to +\infty$. The convergence for linear regression has been established in  \cite{bai1998convergence}. The following theorem establishes the  convergence and convergence rate of  algorithm \eqref{equ: define wbarmax} for linear dynamical systems. Based on this convergence rate, we will design an online learning algorithm \eqref{equ: ucb-sme}  that generates converging upper bounds of $w_{\max}$.}

\begin{theorem}\label{thm: convergence rate of wbarmax}
The  estimation $\wbarmax$ of $w_{\max}$ satisfies: 
$$0 \!\leq w_{\max}-\wbarmax \!\leq  \underbrace{b_z \diam(\Theta_T)}_{\T_3}+\underbrace{w_{\max}\!\!-\!\!\max_{0\leq t \leq T\!-\!1} \|w_t \|_\infty}_{\T_4}$$
Therefore, 
for any $\delta>0$,
    $$ \Pb(w_{\max}-\bar w_{\max}^{(T)}>\delta)\leq \T_1+\T_2\left(\frac{\delta}{2b_z}\right)+\T_5\left(\frac{\delta}{2}\right),$$
    where $\T_5(\delta)= (1-q_w(\delta))^T$.

\end{theorem}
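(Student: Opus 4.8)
The plan is to decompose the error $w_{\max} - \wbarmax$ into two contributions: the gap coming from the fact that the data does not yet see disturbances near the boundary of $\W$ (captured by $\T_4$), and the gap coming from the fact that the minimizing $\theta$ in \eqref{equ: define wbarmax} need not be $\theta^*$ but is merely \emph{some} element of the uncertainty set $\Theta_T$ (captured by $\T_3$). First I would prove the deterministic sandwich inequality. The lower bound $\wbarmax \le w_{\max}$ is immediate: plugging $\theta = \theta^*$ into the minimization gives $\max_t \|x_{t+1} - \theta^* z_t\|_\infty = \max_t \|w_t\|_\infty \le w_{\max}$, so the minimum over $\theta$ is also at most $w_{\max}$. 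For the upper bound, let $\hat\theta$ be a minimizer in \eqref{equ: define wbarmax}, so $\|x_{t+1} - \hat\theta z_t\|_\infty \le \wbarmax$ for every $t \le T-1$; this means $\hat\theta \in \hat\Theta_T(\wbarmax)$, but more importantly $\hat\theta$ is a feasible point for the SME set $\Theta_T$ in \eqref{eq:membership_set} if $\wbarmax \le w_{\max}$ — which it is — since $\|x_{t+1}-\hat\theta z_t\|_\infty \le \wbarmax \le w_{\max}$ forces $x_{t+1} - \hat\theta z_t \in \W$. Hence both $\theta^*$ and $\hat\theta$ lie in $\Theta_T$, so $\|\hat\theta - \theta^*\|_F \le \diam(\Theta_T)$.

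Next I would turn this bound on $\|\hat\theta-\theta^*\|$ into a bound on $w_{\max} - \wbarmax$. For any $t$, by the triangle inequality
\[
\|w_t\|_\infty = \|x_{t+1} - \theta^* z_t\|_\infty \le \|x_{t+1} - \hat\theta z_t\|_\infty + \|(\hat\theta - \theta^*) z_t\|_\infty \le \wbarmax + \|(\hat\theta - \theta^*) z_t\|_\infty.
\]
Using $\|(\hat\theta-\theta^*)z_t\|_\infty \le \|(\hat\theta-\theta^*)z_t\|_2 \le \|\hat\theta-\theta^*\|_F \|z_t\|_2 \le b_z \diam(\Theta_T)$ (Assumption \ref{ass: BMSB, bounded xt}), and taking the max over $t$, we get $\max_t \|w_t\|_\infty \le \wbarmax + b_z\diam(\Theta_T)$. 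Rearranging,
\[
w_{\max} - \wbarmax \le \big(w_{\max} - \max_t \|w_t\|_\infty\big) + b_z\diam(\Theta_T) = \T_4 + \T_3,
\]
which is the claimed deterministic inequality.

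The final step is the high-probability bound. By the union bound, $\Pb(w_{\max}-\wbarmax > \delta) \le \Pb(\T_3 > \delta/2) + \Pb(\T_4 > \delta/2)$. For the first term, $\T_3 = b_z\diam(\Theta_T) > \delta/2$ iff $\diam(\Theta_T) > \delta/(2b_z)$, whose probability is at most $\T_1 + \T_2(\delta/(2b_z))$ by Theorem \ref{thm: estimation err bdd}. For the second term, $\T_4 > \delta/2$ means every coordinate of every $w_t$ stays more than $\delta/2$ away from both $w_{\max}$ and $-w_{\max}$; since the $w_t$ are i.i.d., the probability that a single coordinate of a single $w_t$ fails to enter the $\delta/2$-neighborhood of $w_{\max}$ is at most $1 - q_w(\delta/2)$ by Assumption \ref{ass: tight bound on wt}, and independence over $t = 0,\dots,T-1$ gives $\Pb(\T_4 > \delta/2) \le (1-q_w(\delta/2))^T =: \T_5(\delta/2)$. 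Combining the three pieces yields the stated bound. I expect the mild subtlety — rather than a genuine obstacle — to be in the $\T_4$ bound: one must be careful that $\max_t\|w_t\|_\infty > w_{\max} - \delta/2$ need only require \emph{one} coordinate of \emph{one} $w_t$ to be close to $\pm w_{\max}$, so the complementary event is a product over all $t$ (and it suffices to track one fixed coordinate, giving the clean $(1-q_w(\delta/2))^T$ without an extra $n_x$ factor).
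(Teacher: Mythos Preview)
Your proposal is correct and follows essentially the same approach as the paper: both arguments show the minimizer $\hat\theta$ of \eqref{equ: define wbarmax} lies in $\Theta_T$, bound $\|(\hat\theta-\theta^*)z_t\|_\infty$ by $b_z\diam(\Theta_T)$ via $\|\cdot\|_\infty\le\|\cdot\|_2$ and the Frobenius norm, and then combine Theorem~\ref{thm: estimation err bdd} with the i.i.d.\ bound $\Pb(\T_4>\delta/2)\le(1-q_w(\delta/2))^T$ obtained by tracking a single coordinate of $w_t$. The only detail the paper adds that you gloss over is verifying the minimum in \eqref{equ: define wbarmax} is actually attained (the paper handles this via a short convexity/coercivity lemma), but this is a routine point and does not affect the substance of your argument.
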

Notice that $\T_4$ is the smallest possible learning error of $w_{\max}$ from history $w_t$, which can be achieved if one can directly measure $w_t$. However, with unknown $\theta^*$, it is challenging to measure/compute $w_t$ exactly, then 
Theorem \ref{thm: convergence rate of wbarmax} shows that the learning error of $w_{\max}$ has an additional term $\T_3$ that depends on the  uncertainty around $\theta^*$. Therefore, the convergence rate of $\wbarmax$ can be obtained by our non-asymptotic analysis of SME in Theorem \ref{thm: estimation err bdd}.

Further, when $q_w(\epsilon)=\Omega(\epsilon)$, the convergence rate of $\wbarmax$ can be explicitly bounded by $\tilde O(n_x^{1.5}n_z^2/T)$, which is of the same order as the convergence rate of the diameter of $\Theta_T$.

\begin{corollary}\label{cor: wbarmax bdd when qw=O(epsilon)}
For any $0<\epsilon<1/3$ and any $T\geq 1$, there exists $\delta_T>0$ satisfying $\lim_{T\to \infty}\delta_T=0$  such that 
$$ 0\leq w_{\max}-\bar w_{\max}^{(T)} \leq \delta_T$$
with probability at least $1-3\epsilon$.

    In particular, when $q_w(\delta)=O(\delta)$, with probability $1-3\epsilon$,
    $$ 0\leq w_{\max}-\bar w_{\max}^{(T)} \leq \delta_T=\tilde O(n_x^{1.5}n_z^2/T)$$
\end{corollary}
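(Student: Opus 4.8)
The plan is to derive Corollary~\ref{cor: wbarmax bdd when qw=O(epsilon)} directly from Theorem~\ref{thm: convergence rate of wbarmax} by choosing $\delta$ as a slowly vanishing sequence and controlling each of the three failure terms $\T_1,\T_2,\T_5$ separately. First I would recall from Theorem~\ref{thm: convergence rate of wbarmax} that for every $\delta>0$,
\[
\Pb\bigl(w_{\max}-\wbarmax>\delta\bigr)\le \T_1+\T_2\!\left(\tfrac{\delta}{2b_z}\right)+\T_5\!\left(\tfrac{\delta}{2}\right),
\]
together with the deterministic lower bound $w_{\max}-\wbarmax\ge 0$. The strategy is to pick, for each $T$, a threshold $\delta_T$ and an integer $m=m(T)$ so that the right-hand side is at most $3\epsilon$; then $\delta_T$ is the claimed high-probability bound. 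To make $\T_1\le\epsilon$ it suffices (as remarked after Theorem~\ref{thm: estimation err bdd}) to take $m\ge O(n_z+\log T+\log(1/\epsilon))$, e.g. $m=\Theta(\log T)$ for fixed $\epsilon$; this is legitimate as long as $T>m$, which holds for $T$ large, and for the finitely many small $T$ one can simply set $\delta_T=2w_{\max}$ so the bound holds trivially.

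Next I would handle $\T_2(\delta/(2b_z))$ and $\T_5(\delta/2)$. For the general (first) claim, I want $\delta_T\to 0$: one sets $\delta_T$ to vanish slowly enough that $q_w(a_1\delta_T/(8b_z\sqrt{n_x}))\cdot\lceil T/m\rceil\to\infty$ and $q_w(\delta_T/2)\cdot T\to\infty$, using $1-x\le e^{-x}$ so that $\T_2\le \tilde O((n_xn_z)^{2.5})a_4^{n_xn_z}\exp\!\bigl(-q_w(\cdot)\lceil T/m\rceil\bigr)$ and $\T_5=(1-q_w(\delta_T/2))^T\le\exp(-q_w(\delta_T/2)T)$. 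Since $q_w$ is positive and non-decreasing, for any fixed $\epsilon$ and any divergence rate of $T/m=\Theta(T/\log T)$ one can find such a sequence $\delta_T\downarrow 0$ (for instance by taking $\delta_T$ so that $q_w(c\,\delta_T)=\Theta(\log T/\sqrt T)$, which is possible whenever $q_w$ does not have a jump at $0$; if it does, one can even take $\delta_T$ constant). This yields the first displayed conclusion. For the sharper claim, under $q_w(\delta)=\Theta(\delta)$ I would just invoke Corollary~\ref{cor: uniform dist est. bdd} (applied to $\T_2$) and plug $q_w(\delta/2)=\Theta(\delta)$ into $\T_5$: requiring $q_w(\delta_T/2)T\ge \log(1/\epsilon)+\text{poly}(n_x)$ forces $\delta_T=\tilde O(\mathrm{poly}(n_x)/T)$, and a parallel computation for $\T_2$ forces $\delta_T=\tilde O(n_x^{1.5}n_z^2/T)$ exactly as in Corollary~\ref{cor: uniform dist est. bdd} up to the extra $b_z$ factor absorbed in $\tilde O(\cdot)$. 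The dominant term among $\T_2$ and $\T_5$ is $\T_2$, whose rate matches $\diam(\Theta_T)$, so the combined rate is $\tilde O(n_x^{1.5}n_z^2/T)$.

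The main obstacle is bookkeeping rather than conceptual: I must balance $m$ against $T$ so that $\T_1$ is small while keeping $\lceil T/m\rceil$ large enough that $\T_2$ still decays — the logarithmic choice $m=\Theta(\log T+n_z+\log(1/\epsilon))$ threads this needle, but one has to check that the polynomial-in-$n$ prefactors $\tilde O((n_xn_z)^{2.5})a_4^{n_xn_z}$ in $\T_2$ (and the exponential-in-dimension $a_2^{n_z}$ in $\T_1$) are dominated once $m$ and $T$ are taken large relative to the dimensions; this is where the $\log(1/\epsilon)$ and dimension terms in $m$ enter, and where the final $\mathrm{poly}(n_x)$ factors in $\delta_T$ are pinned down. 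A second, minor subtlety is that the statement claims existence of $\delta_T$ with $\delta_T\to 0$ \emph{for every} admissible $\epsilon\in(0,1/3)$ simultaneously in $T$; I would make $\delta_T$ depend on $\epsilon$ (which is allowed by the phrasing) and simply note monotonicity in $\epsilon$. Everything else is a direct substitution into Theorem~\ref{thm: convergence rate of wbarmax} and the elementary inequality $1-x\le e^{-x}$.
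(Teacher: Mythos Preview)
Your proposal is correct and follows essentially the same route as the paper: invoke Theorem~\ref{thm: convergence rate of wbarmax}, pick $m\ge O(n_z+\log T+\log(1/\epsilon))$ to make $\T_1\le\epsilon$, then choose a vanishing $\delta_T$ so that $\T_2$ and $\T_5$ are each at most~$\epsilon$, and in the $q_w(\delta)=\Omega(\delta)$ case reuse the computation from Corollary~\ref{cor: uniform dist est. bdd}. One small simplification the paper makes that you might adopt: since $a_1=\sigma_z p_z/4\le b_z/4$ (because $\sigma_z\le b_z$ and $p_z\le 1$), one has $\tfrac{a_1\delta}{8b_z\sqrt{n_x}}\le \tfrac{\delta}{2}$, and together with $m\ge 1$ and the $\ge 1$ prefactor in $\T_2$ this gives $\T_5(\delta/2)\le \T_2(\delta/(2b_z))$ outright, so you only need to force $\T_2\le\epsilon$ and $\T_5\le\epsilon$ follows for free---this removes the separate bookkeeping for $\T_5$ you flagged as the main obstacle.
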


\textbf{3) SME with unknown $w_{\max}$.} Unfortunately, $\wbarmax$ cannot be directly applied to SME because $\wbarmax\leq w_{\max}$, which may cause $\theta^*\not \in\hat \Theta_T(\wbarmax)$. However, by leveraging our convergence rate bound in Theorem \ref{thm: convergence rate of wbarmax}, we can construct an upper confidence bound (UCB) of $w_{\max}$ and a corresponding UCB-SME  algorithm:
\begin{align}\label{equ: ucb-sme}
\whatmax&=\wbarmax+\delta_T, \quad \hat \Theta_T^{\text{ucb}}=\hat \Theta_T(\whatmax),
\end{align}
where $\delta_T$ is defined in Corollary \ref{cor: wbarmax bdd when qw=O(epsilon)}. 

Then, by combining Theorem \ref{thm: loose bound}
 and Corollary \ref{cor: wbarmax bdd when qw=O(epsilon)}, we can verify the well-definedness of UCB-SME and obtain its convergence rate.
 \begin{theorem}\label{thm: convergence of ucb-sme}
     For any $0< \epsilon <1/3$, any $T\geq 1$, with probability at least $1-3\epsilon$, we have
     \begin{align*}
         \theta^*& \in \hat \Theta^{\textup{ucb}}_T, \quad \diam(\hat \Theta^{\textup{ucb}}_T)\leq O(\sqrt{n_x}\delta_T).
     \end{align*}
     In particular, if $q_w(\epsilon)=\Omega(\epsilon)$, then $\diam(\hat \Theta^{\textup{ucb}}_T)\leq O(n_x^2n_z^2/T)$ with probability at least $1-3\epsilon$.
 \end{theorem}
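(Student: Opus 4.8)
The plan is to obtain both assertions as corollaries of \Cref{thm: loose bound} (the diameter bound for SME run with a \emph{deterministic} conservative over-estimate of $w_{\max}$) and \Cref{cor: wbarmax bdd when qw=O(epsilon)} (which supplies a deterministic, $T$-dependent threshold $\delta_T$, with $\delta_T\to 0$, that sandwiches $w_{\max}-\wbarmax$ with probability at least $1-3\epsilon$). The bridge between the two is the elementary monotonicity of the set-valued map $\hat w\mapsto\hat\Theta_T(\hat w)$: increasing $\hat w$ only relaxes each constraint $\|x_{t+1}-\hat\theta z_t\|_\infty\le\hat w$, so $\hat w\le\hat w'$ implies $\hat\Theta_T(\hat w)\subseteq\hat\Theta_T(\hat w')$.

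First I would fix $0<\epsilon<1/3$ and take $m$ as in the construction of $\delta_T$. Let $\mathcal E$ be the high-probability event of \Cref{cor: wbarmax bdd when qw=O(epsilon)}, so $\Pb(\mathcal E)\ge 1-3\epsilon$ and on $\mathcal E$ one has $0\le w_{\max}-\wbarmax\le\delta_T$; since $\whatmax=\wbarmax+\delta_T$, this gives $w_{\max}\le\whatmax\le w_{\max}+\delta_T$ on $\mathcal E$ (the lower bound $\wbarmax\le w_{\max}$ is in fact deterministic, by feasibility of $\theta^*$ in \eqref{equ: define wbarmax}; the probability enters only through the upper bound). The inequality $\whatmax\ge w_{\max}$ settles the containment claim at once: $\|x_{t+1}-\theta^*z_t\|_\infty=\|w_t\|_\infty\le w_{\max}\le\whatmax$ for every $t$, hence $\theta^*\in\hat\Theta_T(\whatmax)=\hat\Theta_T^{\textup{ucb}}$.

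For the diameter bound, on $\mathcal E$ monotonicity and $\whatmax\le w_{\max}+\delta_T$ give $\hat\Theta_T^{\textup{ucb}}\subseteq\hat\Theta_T(w_{\max}+\delta_T)$, so it suffices to control $\diam(\hat\Theta_T(w_{\max}+\delta_T))$. Since $w_{\max}+\delta_T$ is a \emph{deterministic} over-estimate with over-estimation gap exactly $\delta_T$, \Cref{thm: loose bound} applies verbatim: for any $\delta>0$, $\diam(\hat\Theta_T(w_{\max}+\delta_T))\le\delta+a_5\sqrt{n_x}\,\delta_T$ outside an event of probability at most $\T_1+\T_2(\delta)$. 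I would then take $\delta=\Theta(\sqrt{n_x}\,\delta_T)$, making the right-hand side $O(\sqrt{n_x}\,\delta_T)$, and argue that for this $\delta$ the failure event of \Cref{thm: loose bound} is already one of the failure events charged when $\delta_T$ was defined in \Cref{cor: wbarmax bdd when qw=O(epsilon)} — the $\T_1$ (BMSB/PE) part is literally identical, and $\T_2(\cdot)$ is nonincreasing in its argument with $\Theta(\sqrt{n_x}\delta_T)$ no smaller than the argument already appearing in that construction. Hence no extra probability is lost, both conclusions hold on $\mathcal E$, and the guarantee stays $1-3\epsilon$. The ``in particular'' claim then follows by substituting $\delta_T=\tilde O(n_x^{1.5}n_z^2/T)$ from \Cref{cor: wbarmax bdd when qw=O(epsilon)}, giving $\diam(\hat\Theta_T^{\textup{ucb}})=O(\sqrt{n_x}\cdot n_x^{1.5}n_z^2/T)=O(n_x^2 n_z^2/T)$.

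The main obstacle will be precisely the last point: $\whatmax$ is a data-dependent random threshold, so \Cref{thm: loose bound}, which is stated for a fixed known over-estimate, cannot be invoked on $\hat\Theta_T^{\textup{ucb}}$ directly. Monotonicity of $\hat\Theta_T(\cdot)$ resolves this by passing to the deterministic surrogate $w_{\max}+\delta_T$, but one must then check carefully that the high-probability event underpinning the surrogate's diameter bound is subsumed in the event $\mathcal E$ of \Cref{cor: wbarmax bdd when qw=O(epsilon)}, so that the union bound does not inflate $3\epsilon$ into $4\epsilon$ or worse. Everything else is routine propagation of the constants $a_1,\dots,a_5$ and the choice of $m$ from \Cref{thm: estimation err bdd,thm: loose bound,thm: convergence rate of wbarmax}.
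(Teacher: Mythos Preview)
Your proposal is correct and follows essentially the same route as the paper: pass from the random threshold $\whatmax$ to the deterministic surrogate $w_{\max}+\delta_T$ via monotonicity of $\hat w\mapsto\hat\Theta_T(\hat w)$ (using the deterministic inequality $\wbarmax\le w_{\max}$), then invoke \Cref{thm: loose bound}, and finally argue that the failure events for the diameter bound are already contained in those charged when constructing $\delta_T$ in \Cref{cor: wbarmax bdd when qw=O(epsilon)}, so the total failure probability stays at $3\epsilon$. The only cosmetic difference is that the paper takes $r=\delta_T$ rather than $r=\Theta(\sqrt{n_x}\,\delta_T)$ before checking $\T_2(r)\le\T_2(\delta_T/2b_z)\le\epsilon$, and it spells out the overlap of failure events explicitly by unwinding to the common $\Ec_2$ and $G_{i,k}$ events; your sketch of this last step is accurate.
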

 Notice that UCB-SME converges at the same rate in terms of $T$ but $\sqrt{n_x}$-worse  in terms of dimensionality when compared with SME knowing a tight bound $w_{\max}$.

\begin{remark}[Computation complexity]
    SME  can be computed by linear programming since all constraints are linear in \eqref{eq:membership_set}. Further, UCB-SME can also be computed by linear programming because \eqref{equ: define wbarmax} can be reformulated as a linear program.   However, the number of constraints for SME and UCB-SME increases linearly with $T$. To address the computation issue of SME, many computationally efficient algorithms have been proposed based on approximations of \eqref{eq:membership_set},  e.g. \citep{lu2019robust, yeh2022robust, bai1995membership}. The convergence rates of these approximate algorithms are unknown and how to design computationally efficient UCB-SME remains open.

\end{remark}

\section{Proof Sketch of Theorem \ref{thm: estimation err bdd}}\label{sec: proof sketch Thm1}

The major technical novelty of this paper is the proof of Theorem \ref{thm: estimation err bdd}, thus we describe the key ideas here.  
The complete proof  is provided in  Appendix \ref{appendix:proof_main}. For ease of notation and without loss of generality, we assume $T/m$ is an integer in the following.

Specifically, we first define a set $\Gamma_T$ on the model estimation error $\gamma=\hat \theta-\theta^*$ by leveraging the observation that $x_{s+1}-\hat \theta z_s=w_s-(\hat \theta -\theta^*) z_s$,
\begin{align}\label{equ: define Gamma t}
	\Gamma_t=\bigcap_{s=0}^{t-1} \{\gamma : \|w_s -\gamma  z_s \|_\infty \leq w_{\max}\}, \quad \forall\, t\geq 0.
\end{align}
Notice that $\Theta_t =\theta_*+\Gamma_t$, so $\diam(\Theta_t)=\diam(\Gamma_t)$, and $$\diam(\Gamma_t)=\sup_{\gamma,\gamma'\in \Gamma_t} \|\gamma-\gamma'\|_F\leq 2\sup_{\gamma\in \Gamma_t}\|\gamma\|_F.$$ Thus, we can define $\Ec_1\coloneqq\{\exists\, \gamma \in \Gamma_T, \text{ s.t. } \|\gamma\|_F\geq \frac{\delta}{2} \}$ such that ${\Pb}(\diam(\Theta_T)>\delta) \leq  {\Pb}(\Ec_1)$.

\begin{figure*}[ht]
	\centering

	\begin{subfigure}[b]{0.32\textwidth}
		\centering
		\includegraphics[width=\textwidth]{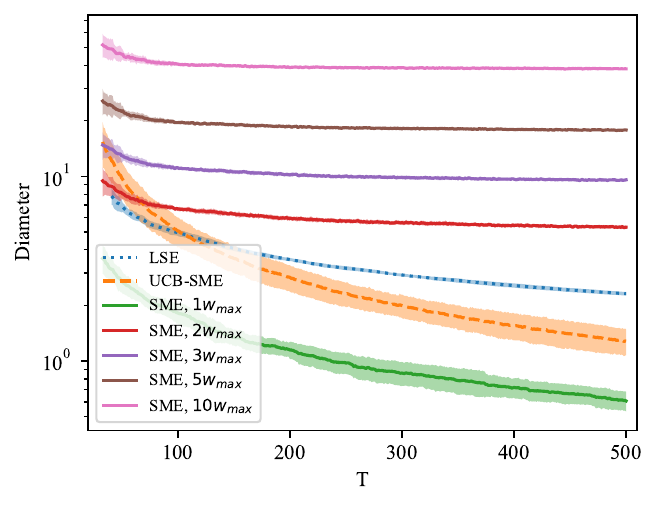}
		\caption{{Truncated Gaussian}}
		\label{fig:loose_gaussian}
	\end{subfigure}
	\hfill
	\begin{subfigure}[b]{0.32\textwidth}
		\centering
		\includegraphics[width=\textwidth]{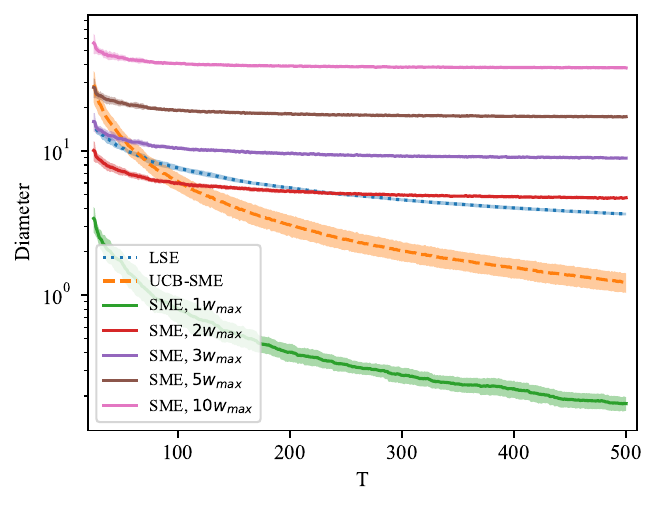}
		\caption{Uniform Distribution}
		\label{fig:loose_uniform}
	\end{subfigure}
\hfill
 	\begin{subfigure}[b]{0.35\textwidth}
	\centering
	\includegraphics[width=\textwidth, trim=0cm 0.25cm 0cm 0.26cm, clip]{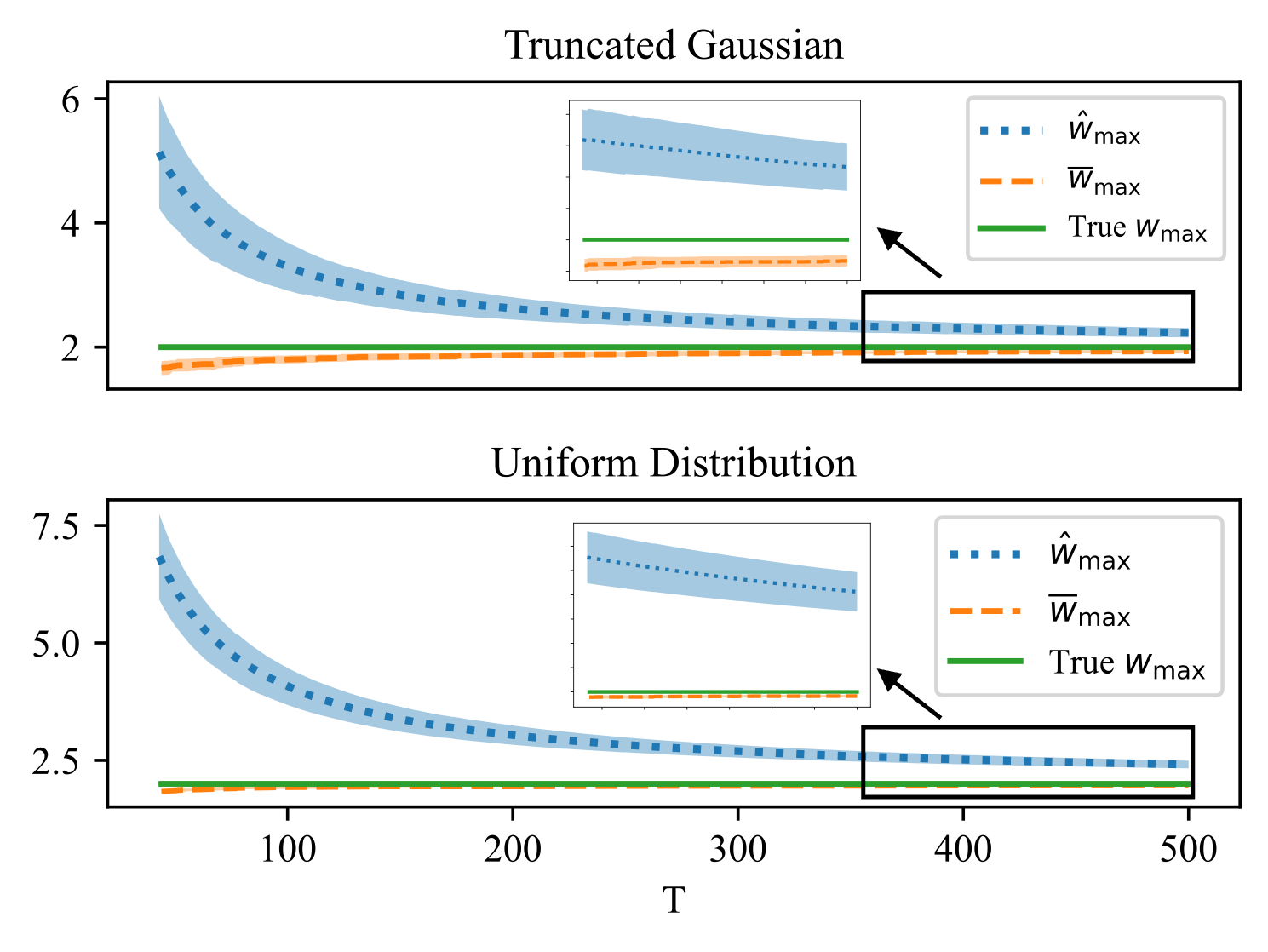}
	\caption{$\whatmax$ in UCB-SME and estimation $\wbarmax$}
	\label{fig:learning_wmax}
\end{subfigure}

	\caption{ \nbf{Figures (a)-(b)} compares the diameters of SME, UCB-SME, and SME with loose disturbance upper bounds that are 2, 3, 5, and 10 times larger than the true disturbance bound $w_{\max}$, as well as the baseline uncertainty set from the 90\% confidence region of LSE. \nbf{Figure (c)} shows the convergence  to the true bound $w_{\max}$ of the lower estimation $\bar w_{\max}$ in \eqref{equ: define wbarmax}  and the UCB  $\hat{w}_{\max}$ generated by the UCB-SME algorithm in \nbf{Figures (a)-(b)}.}
	\label{fig:loose_bound}
\end{figure*}

Next, we define an event $\Ec_2$ below, which is essentially PE on every time segments $km+1\leq t \leq km+m$ for $k\geq 0$, where the choice of $m$ will be specified later.
\begin{align*}
\Ec_2=	\left\{\frac{1}{m}\sum_{s= {1}}^{ {m}}  {z}_{km+s}  {z}_{km+s}^\top \succeq  a_1^2 I_{ {n_z}}, \forall\, 0\leq \!k\! \leq \!\!\left\lceil\frac{T}{m}\right\rceil\!\!-\!1 \right\}
\end{align*}

where $a_1 =\frac{\sigma_{z} p_{z}}{4}$. Now, by dividing the event $\Ec_1$ based on $\Ec_2$, we obtain
$$  {\Pb}(\diam(\Theta_T)>\delta)\leq \Pb(\Ec_1)\leq  {\Pb}(\Ec_2^c)+  {\Pb}( \Ec_1\cap \Ec_2 ).$$
The proof can be completed by establishing the following bounds on $ {\Pb}(\Ec_2^c)$ and $  {\Pb}( \Ec_1\cap \Ec_2 )$.
\begin{lemma}[Bound on $ {\Pb}(\Ec_2^c)$]\label{lem: bound PE c}
	$ \Pb(\Ec_2^c) \leq \T_1$,	
where $a_2=\frac{64  {b_z^2}}{\sigma_{z}^2 p_{z}^2}$ and $a_3= \frac{p_{z}^2}{8}$.
\end{lemma}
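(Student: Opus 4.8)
The plan is to bound $\Pb(\Ec_2^c)$ by a union bound over the $\lceil T/m\rceil$ time segments, so that $\Pb(\Ec_2^c) \le \sum_{k=0}^{\lceil T/m\rceil - 1} \Pb\!\left(\tfrac1m\sum_{s=1}^m z_{km+s} z_{km+s}^\top \not\succeq a_1^2 I_{n_z}\right)$, and then to control each term using the matrix small-ball machinery from \cite{simchowitz2018learning}. Recall that by Assumption \ref{ass: BMSB, bounded xt}, $\{z_t\}$ is $(1, \sigma_z^2 I_{n_z}, p_z)$-BMSB with respect to the natural filtration, and $\|z_t\|_2 \le b_z$ almost surely. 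The choice $a_1 = \sigma_z p_z / 4$ is exactly what appears in Proposition 2.5 of \cite{simchowitz2018learning}: when a process satisfies the $(k, \Gamma_{sb}, p)$-BMSB condition and has bounded norm, then $\tfrac1N \sum_{t} z_t z_t^\top \succeq \tfrac{p^2}{8}\Gamma_{sb}$ (up to the $1/4$ and $1/2$ factors that give $a_1^2 = \sigma_z^2 p_z^2/16$) with probability at least $1 - (\text{dimension factor}) \cdot \exp(-N p^2/8 \cdot (\dots))$, where the dimension factor comes from an $\varepsilon$-net over the sphere $\z^{n_z-1}$ and scales like $a_2^{n_z}$ with $a_2 = 64 b_z^2 / (\sigma_z^2 p_z^2)$ being the ratio governing the net cardinality.

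Concretely, the key steps are: (i) fix a segment index $k$ and apply the conditional BMSB property restarted at time $km$ — since the BMSB condition holds for \emph{all} $t \ge 1$ and all conditioning on $\F_t$, each segment of length $m$ inherits a valid small-ball lower bound; (ii) invoke the matrix concentration result (the analog of \cite[Proposition 2.5]{simchowitz2018learning}), which says that on a fixed $1/4$-net $\mathcal N$ of the unit sphere in $\R^{n_z}$ of size $|\mathcal N| \le 9^{n_z}$, the bad event $\{\exists \lambda \in \mathcal N : \tfrac1m \sum_{s=1}^m (\lambda^\top z_{km+s})^2 < 2 a_1^2\}$ has probability at most $|\mathcal N|\, \exp(-m p_z^2 / 8)$ by a Chernoff/Azuma argument on the indicator variables $\one\{|\lambda^\top z_{km+s}| \ge \sigma_z\}$, combined with the boundedness $\|z_t\|_2 \le b_z$ to pass from the net back to the whole sphere (this is where $b_z$ and hence $a_2 = 64 b_z^2/(\sigma_z^2 p_z^2)$ enters — a larger $b_z$ relative to the small-ball scale requires a finer net, contributing the polynomial-in-$n_z$ prefactor $\tilde O(n_z^{2.5})$); (iii) take a union bound over $k = 0, \dots, \lceil T/m\rceil - 1$, yielding the factor $T/m$ in front, to conclude $\Pb(\Ec_2^c) \le \frac{T}{m}\tilde O(n_z^{2.5}) a_2^{n_z} \exp(-a_3 m) = \T_1$ with $a_3 = p_z^2/8$.

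The main obstacle — though it is really a bookkeeping obstacle rather than a conceptual one — is tracking the precise polynomial-in-dimension prefactor $\tilde O(n_z^{2.5})$ and verifying that the net-to-sphere transfer produces exactly the constants $a_1, a_2, a_3$ as stated, rather than merely constants of the same flavor. One must be careful that the restarted BMSB at time $km$ is legitimate: the filtration $\F_t = \F(w_0,\dots,w_{t-1}, z_0,\dots,z_t)$ is the same for every segment, and the BMSB hypothesis quantifies over all $t \ge 1$, so conditioning on $\F_{km}$ and summing over $s = 1,\dots,m$ is exactly the setting the condition was designed for; there is no issue of "used-up randomness" across segments because the union bound handles that. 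I would therefore cite \cite[Proposition 2.5]{simchowitz2018learning} (or reprove its short argument) applied segment-by-segment, and spend the remaining effort only on making the constants match those in the theorem statement, deferring the explicit $\tilde O(\cdot)$ formula to Appendix \ref{append: precise upper bdd} as the authors indicate.
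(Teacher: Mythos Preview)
Your proposal is correct and follows essentially the same route as the paper: a union bound over the $\lceil T/m\rceil$ segments, then on each segment an $\epsilon_\lambda$-net argument over $\Sb_{n_z}(0,1)$ with resolution $\epsilon_\lambda = 1/a_2 = \sigma_z^2 p_z^2/(64 b_z^2)$, applying the scalar $(1,\sigma_z,p_z)$-BMSB bound (Proposition~2.5 of \cite{simchowitz2018learning}) at each net point and using $\|z_t\|_2\le b_z$ to absorb the net-to-sphere error. The only imprecision is your initial mention of a ``fixed $1/4$-net of size $9^{n_z}$'' before you (correctly) note the resolution must scale with $b_z$; the paper fixes $\epsilon_\lambda = 1/a_2$ from the outset and invokes the Rogers--Verger-Gaugry covering bound to get $v_\lambda \le 544\, n_z^{2.5}\log(a_2 n_z)\, a_2^{n_z}$, which is the source of the $\tilde O(n_z^{2.5})$ prefactor.
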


\begin{lemma}[Bound on $  {\Pb}( \Ec_1\cap \Ec_2 )$]\label{lem: bound PE and gamma>delta/2}
	$
		 {\Pb}( \Ec_1\cap \Ec_2 ) \leq \T_2(\delta)$, 
 where $a_4=\max(1, 4b_z\sqrt{n_x}/a_1)$.
\end{lemma}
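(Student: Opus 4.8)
\textbf{Proof proposal for Lemma \ref{lem: bound PE and gamma>delta/2}.}

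The plan is to show that, on the event $\Ec_2$, whenever some $\gamma\in\Gamma_T$ has $\|\gamma\|_F\geq\delta/2$, a certain ``boundary-visiting'' event for the disturbances must fail on \emph{every} one of the $\lceil T/m\rceil$ time blocks, and then to bound the probability of this failure using Assumption \ref{ass: tight bound on wt} together with an $\epsilon$-net argument over directions. First I would fix a block index $k$ and work on the segment $km+1\le t\le km+m$. On $\Ec_2$ we have $\frac1m\sum_{s=1}^m z_{km+s}z_{km+s}^\top\succeq a_1^2 I_{n_z}$, so for any unit vector $v\in\R^{n_z}$ there is at least one $s$ in the block with $|v^\top z_{km+s}|\geq a_1$ (indeed $\max_s |v^\top z_{km+s}|^2 \ge \frac1m\sum_s |v^\top z_{km+s}|^2 \ge a_1^2$). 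Now suppose $\gamma\in\Gamma_T$ with $\|\gamma\|_F\geq\delta/2$: write $\gamma = \sum_j e_j (\gamma^{(j)})^\top$ where $\gamma^{(j)}\in\R^{n_z}$ is the $j$-th row; some row satisfies $\|\gamma^{(j)}\|_2\geq \frac{\delta}{2\sqrt{n_x}}$. Taking $v=\gamma^{(j)}/\|\gamma^{(j)}\|_2$, on $\Ec_2$ there is an index $t$ in block $k$ with $|\gamma^{(j)}\,{}^\top z_t| = |(\gamma z_t)_j| \geq a_1\|\gamma^{(j)}\|_2 \geq \frac{a_1\delta}{2\sqrt{n_x}}$. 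But $\gamma\in\Gamma_T$ forces $|w_t^j - (\gamma z_t)_j|\le w_{\max}$, so $w_t^j$ and $w_t^j - (\gamma z_t)_j$ cannot both be within $\frac{a_1\delta}{4\sqrt{n_x}}$ of the \emph{same} endpoint $\pm w_{\max}$; in particular $w_t^j$ must lie strictly more than $\frac{a_1\delta}{4\sqrt{n_x}}$ away from at least one of $w_{\max}$, $-w_{\max}$ — i.e.\ ``$w_t$ is not $\frac{a_1\delta}{4\sqrt n_x}$-close to both endpoints in coordinate $j$'' at this particular $t$.

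The subtlety is that $\gamma$ (hence the row $j$, the direction $v$, and the time index $t$) is chosen adversarially \emph{after} seeing the data, so I cannot just union-bound over a fixed finite collection. To handle this I would discretize: let $\Ec_3(\epsilon)$ be the event that in some block $k$, \emph{every} coordinate $j$ and \emph{every} time $t$ in the block has $w_t^j$ within $\epsilon$ of $w_{\max}$ and within $\epsilon$ of $-w_{\max}$ — no wait, that is impossible for $\epsilon$ small. The cleaner route, matching the $a_4^{n_xn_z}$ and $(1-q_w)^{\lceil T/m\rceil}$ structure of $\T_2(\delta)$, is: build a finite $\rho$-net $\mathcal N$ over the set of candidate directions $\{(j,v): 1\le j\le n_x,\ \|v\|_2=1\}$ of size $\le n_x (C/\rho)^{n_z}$ (hence the $n_x n_z$ in the exponent); for each net point define the ``good'' event $G_k(j,v)$ that some $t$ in block $k$ has $w_t^j$ at distance $>\epsilon$ from the endpoint in the $\pm$ direction aligned with $v^\top z_t$ — something that, given $\F_{t-1}$-measurability of $z_t$ and the fresh randomness of $w_t$, has conditional probability at least $q_w(\epsilon)$ per time step by Assumption \ref{ass: tight bound on wt}, so $\Pb(G_k(j,v)^c\mid \text{past block})\le (1-q_w(\epsilon))^{\lfloor\cdot\rfloor}$ — actually just using one time step per block gives the bare $(1-q_w(\epsilon))$, and the $\lceil T/m\rceil$ blocks are chained via the tower rule to give $(1-q_w(\epsilon))^{\lceil T/m\rceil}$. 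Then on $\Ec_2$, an adversarial $\gamma$ with $\|\gamma\|_F\ge\delta/2$ yields a true direction $(j,v)$; rounding to the nearest net point $(j,\hat v)$ and choosing $\rho$ proportional to $a_1\delta/\sqrt{n_x}$ and $\epsilon = a_1\delta/(4\sqrt{n_x})$ absorbs the rounding error, so $\Ec_1\cap\Ec_2$ is contained in $\bigcup_{(j,\hat v)\in\mathcal N}\{G_k(j,\hat v)^c \ \forall k\}$; a union bound over $\mathcal N$ gives $\Pb(\Ec_1\cap\Ec_2)\le |\mathcal N|\,(1-q_w(\tfrac{a_1\delta}{4\sqrt{n_x}}))^{\lceil T/m\rceil} = \tilde O((n_xn_z)^{2.5}) a_4^{n_xn_z}(1-q_w(\tfrac{a_1\delta}{4\sqrt{n_x}}))^{\lceil T/m\rceil} = \T_2(\delta)$, with $a_4$ coming out of the net radius $C/\rho \asymp 4b_z\sqrt{n_x}/a_1$ (the $b_z$ entering because $z_t$ must be rescaled to a unit ball of radius $b_z$ to build the net, and the $\max(1,\cdot)$ guarding the degenerate small-$\delta$ regime).

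The main obstacle, and the step I would spend the most care on, is making the net argument rigorous in the face of the adaptivity: one must define the per-block ``good'' events \emph{only in terms of the disturbances} $w_t$ (which are i.i.d.\ and whose boundary behavior is governed by $q_w$), while the direction in which $z_t$ has large projection is $\F_{t-1}$-measurable, so the conditional-probability lower bound $q_w(\epsilon)$ for ``$w_t^j$ avoids the relevant endpoint'' holds conditionally on the past and can be chained block-by-block with the tower property. A second delicate point is the quantitative net bound — verifying that the covering number of $\{(j,v)\}$ at scale $\asymp a_1\delta/(b_z\sqrt{n_x})$ really produces the stated $a_4^{n_xn_z}$ prefactor (up to the polynomial $\tilde O((n_xn_z)^{2.5})$ factors that the appendix tracks) and that the triangle-inequality slack from rounding $v$ to $\hat v$ is at most $\epsilon$, given $\|z_t\|_2\le b_z$. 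Once these are in place the lemma follows by the union bound as above.
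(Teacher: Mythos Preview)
Your overall plan---discretize directions via a net, define a per-block event for each net point that must hold on $\Ec_1\cap\Ec_2$, bound each block's conditional probability by $1-q_w$, and chain over blocks via the tower property---is exactly the paper's. But the step you yourself flag (``which time step per block'') is the heart of the argument, and your definition of $G_k(j,v)$ cannot work as written. On $\Ec_1\cap\Ec_2$ you only establish a statement about \emph{one} time $t$ per block (not all of them), and your direction is inverted: what $\gamma\in\Gamma_T$ together with $|(\gamma z_t)_j|\ge a_1\delta/(4\sqrt{n_x})$ actually forces is that $w_t^j$ is bounded \emph{away} from the specific endpoint $-\mathrm{sgn}((\gamma z_t)_j)\,w_{\max}$, and it is the \emph{complement} (``$w_t^j$ visits that endpoint'') whose probability is $\ge q_w$ under Assumption~\ref{ass: tight bound on wt}. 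The paper resolves both issues at once by defining, for each net matrix $\gamma_i$, a stopping time $L_{i,k}=\min\{l\ge1:\|\gamma_i z_{km+l}\|_\infty\ge a_1/\sqrt{n_x}\}$ and letting $b_{i,t},j_{i,t}$ be the sign and coordinate realizing $\|\gamma_i z_t\|_\infty$; the bad event is $G_{i,k}=\{b_{i,km+L_{i,k}}w_{km+L_{i,k}}^{j_{i,km+L_{i,k}}}\ge a_1\delta/(4\sqrt{n_x})-w_{\max}\}\cap\{\text{PE in block }k\}$. Since $\{L_{i,k}=l\}$, $b_{i,km+l}$, $j_{i,km+l}$ all lie in $\F_{km+l}$ while $w_{km+l}$ is independent of $\F_{km+l}$ (note: $z_t\in\F_t$, not $\F_{t-1}$ as you write, though this is harmless), one obtains $\Pb(G_{i,k}\mid\bigcap_{k'<k}G_{i,k'})\le1-q_w(a_1\delta/(4\sqrt{n_x}))$ and then chains.

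Your discretization also differs from the paper's. The paper nets the full \emph{matrix} unit sphere $\Sb_{n_x\times n_z}(0,1)$ at radius $1/a_4=a_1/(4b_z\sqrt{n_x})$, yielding $\tilde O((n_xn_z)^{2.5})a_4^{n_xn_z}$ points---this is where the $n_xn_z$ exponent in $\T_2$ comes from. Your net over pairs $(j,v)$ with $v\in\Sb_{n_z}(0,1)$ has size only $n_x\cdot(C/\rho)^{n_z}$, so your parenthetical ``hence the $n_xn_z$ in the exponent'' is incorrect. The net radius should also be $\delta$-independent: you need $|\hat v^\top z_t|\ge a_1\Rightarrow|v^\top z_t|\ge a_1/2$, which requires only $\|\hat v-v\|_2\le a_1/(2b_z)$, not $\rho\propto a_1\delta/\sqrt{n_x}$. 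If your row-based reduction were executed carefully with the stopping-time machinery above, it would in fact give a \emph{sharper} prefactor than $\T_2(\delta)$, and the lemma as stated would follow a fortiori.
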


Roughly, Lemma \ref{lem: bound PE c} indicates that PE holds with high probability, which is proved by leveraging the BMSB assumption and set discretization. 
The proof of Lemma \ref{lem: bound PE and gamma>delta/2} is more involved and is our major technical contribution. On a high level, the proof relies on  two technical lemmas below.
\begin{lemma}[Discretization of $\Ec_1\cap\Ec_2$ (informal)]\label{lem: discretize E1 E2}
	Let $\M=\{\gamma_1, \dots, \gamma_{v_\gamma}\}$ denote an $\epsilon_\gamma$-net of $\{\gamma: \|\gamma\|_F=1\}$. Under a proper choice of $\epsilon_\gamma$, we have $v_\gamma = \tilde O( {n_x^{2.5}}n_z^{2.5})a_4^{n_x n_z}$.\footnote{The exact formulas of $v_{\gamma}$ and $\epsilon_{\gamma}$ are in Lemma \ref{lem: mesh on unit sphere of matrices}.}
We can construct $\tilde \Gamma_T$ such that 
	\begin{align*}
		{\Pb}(\Ec_1\cap\Ec_2)&\leq  {\Pb}(\{\exists\,  1\leq i \leq v_\gamma, d\geq 0, \textup{ s.t. }d\gamma_i \in \tilde \Gamma_T\}\cap \Ec_2)\\
		& \leq \sum_{i=1}^{v_\gamma}  {\Pb}(\Ec_{1, {i}}\cap \Ec_2)
	\end{align*} 
	where $\Ec_{1, {i}}=\{\exists\,  d\geq 0, \textup{ s.t. }d\gamma_i \in \tilde \Gamma_T\}$. 
	
\end{lemma}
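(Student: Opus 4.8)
The plan is to combine a star-shapedness reduction with a volumetric net on the Frobenius unit sphere and a slab-enlargement step. First I would note that $\Gamma_T=\bigcap_{s=0}^{T-1}\{\gamma:\|w_s-\gamma z_s\|_\infty\le w_{\max}\}$ is convex (each slab is the preimage of an $\ell_\infty$-ball under the affine map $\gamma\mapsto w_s-\gamma z_s$) and contains the origin (take $\gamma=0$ and use $\|w_s\|_\infty\le w_{\max}$ from \Cref{ass: on wt}); hence $\Gamma_T$ is star-shaped about $0$. Consequently, whenever some $\gamma\in\Gamma_T$ has $\|\gamma\|_F\ge\delta/2$, its normalization $\bar\gamma=\gamma/\|\gamma\|_F$ satisfies $\tfrac{\delta}{2}\bar\gamma\in\Gamma_T$, because $\tfrac{\delta}{2}\bar\gamma$ lies on the segment from $0$ to $\gamma$. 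Thus $\Ec_1\subseteq\{\exists\,\bar\gamma:\ \|\bar\gamma\|_F=1,\ \tfrac{\delta}{2}\bar\gamma\in\Gamma_T\}$, which replaces the unbounded quantifier by one over a compact sphere; the event $\Ec_2$ plays no role in this step and is merely carried along.

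Next I would fix an $\epsilon_\gamma$-net $\M=\{\gamma_1,\dots,\gamma_{v_\gamma}\}$ of $\{\gamma\in\R^{n_x\times n_z}:\|\gamma\|_F=1\}$. With the resolution $\epsilon_\gamma$ prescribed in \Cref{lem: mesh on unit sphere of matrices} — essentially $\epsilon_\gamma\asymp a_1/(b_z\sqrt{n_x})$, i.e. fine enough that the approximation error below is a controlled fraction of the displacement margin that $\Ec_2$ guarantees — the standard volumetric bound $v_\gamma\le(1+2/\epsilon_\gamma)^{n_xn_z}$ yields $v_\gamma=\tilde O(n_x^{2.5}n_z^{2.5})\,a_4^{n_xn_z}$.

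For the approximation step: given $\bar\gamma$ on the sphere with $\tfrac{\delta}{2}\bar\gamma\in\Gamma_T$, pick $\gamma_i\in\M$ with $\|\bar\gamma-\gamma_i\|_F\le\epsilon_\gamma$; then for every $s$,
\begin{align*}
\|w_s-\tfrac{\delta}{2}\gamma_i z_s\|_\infty
&\le \|w_s-\tfrac{\delta}{2}\bar\gamma z_s\|_\infty+\tfrac{\delta}{2}\|(\bar\gamma-\gamma_i)z_s\|_2\\
&\le w_{\max}+\tfrac{\delta}{2}\,\epsilon_\gamma b_z,
\end{align*}
using $\|v\|_\infty\le\|v\|_2$, $\|(\bar\gamma-\gamma_i)z_s\|_2\le\|\bar\gamma-\gamma_i\|_F\,\|z_s\|_2$, and $\|z_s\|_2\le b_z$ from \Cref{ass: BMSB, bounded xt}. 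Hence, with $\tilde\Gamma_T:=\bigcap_{s=0}^{T-1}\{\gamma:\|w_s-\gamma z_s\|_\infty\le w_{\max}+\tfrac{\delta}{2}\epsilon_\gamma b_z\}$, the scaling $d=\delta/2$ puts $d\gamma_i\in\tilde\Gamma_T$ (and also records $\|d\gamma_i\|_F=\delta/2$), so $\Ec_{1,i}$ occurs. Therefore $\Ec_1\cap\Ec_2\subseteq\bigcup_{i=1}^{v_\gamma}(\Ec_{1,i}\cap\Ec_2)$, and a union bound over $i$ finishes the proof.

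The only delicate point is the calibration of $\epsilon_\gamma$: it must be small enough that the slab enlargement $\tfrac{\delta}{2}\epsilon_\gamma b_z$ stays a strictly controlled fraction of the per-direction displacement $\sim a_1\delta$ that the PE event $\Ec_2$ forces along $\gamma_i$ — this is exactly what makes the argument of $q_w$ in \Cref{lem: bound PE and gamma>delta/2} equal $a_1\delta/(4\sqrt{n_x})$ — yet not so small that $v_\gamma=(1+2/\epsilon_\gamma)^{n_xn_z}$ degrades the stated dimension dependence. The remaining work is routine $\ell_\infty$-versus-$\ell_2$ bookkeeping (the source of the $\sqrt{n_x}$ factors) plus carrying $\tilde\Gamma_T$ into the estimate of \Cref{lem: bound PE and gamma>delta/2}.
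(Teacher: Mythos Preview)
Your argument is correct and self-contained, but it takes a genuinely different route from the paper's formal proof. You exploit the convexity and star-shapedness of $\Gamma_T$ about the origin to normalize any large $\gamma$ onto the sphere, then absorb the net discretization error by inflating each slab, producing an explicit $\tilde\Gamma_T=\bigcap_{s}\{\gamma:\|w_s-\gamma z_s\|_\infty\le w_{\max}+\tfrac{\delta}{2}\epsilon_\gamma b_z\}$. In your argument the event $\Ec_2$ is indeed a passenger at this stage. By contrast, the paper's formal version (Lemma~\ref{lem: discretize E1 E2 formal}) never constructs a slab-enlarged $\tilde\Gamma_T$; instead it uses $\Ec_2$ already in the discretization step, via Lemmas~\ref{lem: implication of E2 on gammai}--\ref{lem: implication of E2 on gamma z}, to guarantee that at the stopping times $km+L_{i,k}$ one has $b_{i,km+L_{i,k}}(\gamma z_{km+L_{i,k}})^{j_{i,km+L_{i,k}}}\ge \tfrac{a_1}{2\sqrt{n_x}}\|\gamma\|_F$ for some net point $\gamma_i$, and then defines $\Ec_{1,i}$ directly in terms of these coordinates and signs. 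In effect, the paper front-loads the PE information so that $\Ec_{1,i}$ is already in the exact form the stopping-time argument of Lemma~\ref{lem: construct Gik and Lik} requires, whereas your route defers that step and must still combine $\tilde\Gamma_T$ with $\Ec_2$ downstream (as you correctly anticipate in your closing paragraph). Both calibrations of $\epsilon_\gamma$ coincide and both yield the same $q_w(a_1\delta/(4\sqrt{n_x}))$ in the end; the trade-off is that your decomposition is more elementary and modular, while the paper's avoids tracking an enlarged feasibility set through the subsequent lemma. One minor point: as you noticed, the informal definition $\Ec_{1,i}=\{\exists\,d\ge 0: d\gamma_i\in\tilde\Gamma_T\}$ is vacuous ($d=0$ always works); your fix of recording $d=\delta/2$ is exactly what is needed, and the paper's formal $\Ec_{1,i}$ likewise departs from the informal statement.
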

Lemma \ref{lem: discretize E1 E2} leverages finite set discretization to bound the existence of a feasible element in an infinite continuous set. The formal version of Lemma \ref{lem: discretize E1 E2} is provided as Lemma \ref{lem: discretize E1 E2 formal} in the appendix. 

\begin{lemma}[{Construction} of event $G_{i,k}$ via stopping times (informal)]\label{lem: construct Gik and Lik}
	Consider $\F_t$ as defined in Assumption \ref{ass: BMSB, bounded xt}. Under the conditions in Lemma \ref{lem: discretize E1 E2}, we  construct $G_{i,k}$ for all $i$ and all $0\leq k \leq T/m-1$ by
	\begin{align*}
		G_{i,k}=&\left\{\ b_{i, km+L_{i,k}} w_{km+L_{i,k}}^{j_{i, km+L_{i,k}}} \geq \frac{a_1\delta}{4\sqrt{n_x}}-w_{\max},   \textup{ and } \right.\\
		& \  \ \ \left.\frac{1}{m}\sum_{s= {1}}^{ {m}}  {z}_{km+s}  {z}_{km+s}^\top \succeq  a_1^2 I_{ {n_z}}\right\}.
	\end{align*}
where $b_{i,t}, j_{i,t}$ are measurable in $\F_t$, and $L_{i,k}$ is  constructed as a stopping time with respect to $\{\F_{km+l}\}_{l\geq 0}$. The formal definitions of $b_{i,t}, j_{i,t}, L_{i,k}$ are provided in Appendix \ref{append: define bi ji Li}.

Then, we have
	\begin{align*}
		{\Pb}&(\Ec_{1,i}\cap \Ec_2)\!\leq \!\Pb\left(\bigcap_{k=0}^{T/m-1}G_{i,k}\!\right)
\!\leq \!\left( 1-q_w\left(\frac{a_1 \delta}{4\sqrt n_x}\right)\!\right)^{\frac{T}{m}}
	\end{align*} 
 \vspace{-0.1in}
\end{lemma}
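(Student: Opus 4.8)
The plan is to split Lemma~\ref{lem: construct Gik and Lik} into a deterministic set inclusion, $\Ec_{1,i}\cap\Ec_2\subseteq\bigcap_{k=0}^{T/m-1}G_{i,k}$, and a probabilistic bound on $\Pb\big(\bigcap_k G_{i,k}\big)$ obtained by chaining a one-block estimate through the tower rule. Throughout write $\epsilon:=\tfrac{a_1\delta}{4\sqrt{n_x}}$, so that the target of the second part is $\Pb\big(\bigcap_k G_{i,k}\big)\le(1-q_w(\epsilon))^{T/m}$.

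\emph{Deterministic inclusion.} Fix a net point $\gamma_i$ (so $\|\gamma_i\|_F=1$) and a block $k$, and work on $\Ec_{1,i}\cap\Ec_2$. The $k$-th clause of $\Ec_2$ reads $\frac1m\sum_{s=1}^m z_{km+s}z_{km+s}^\top\succeq a_1^2 I_{n_z}$, so averaging $\|\gamma_i z_{km+s}\|_2^2=\tr\!\big(\gamma_i z_{km+s}z_{km+s}^\top\gamma_i^\top\big)$ over $s=1,\dots,m$ gives at least $a_1^2\|\gamma_i\|_F^2=a_1^2$; hence some index in the block has $\|\gamma_i z\|_2\ge a_1$. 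I take $L_{i,k}$ to be the \emph{first} such $l\in\{1,\dots,m\}$ — a stopping time for $\{\F_{km+l}\}_l$ since $z_{km+l}\in\F_{km+l}$ — set $j_{i,t}$ to the coordinate of $\gamma_i z_t$ of largest magnitude (so $|(\gamma_i z_t)^{j_{i,t}}|\ge\|\gamma_i z_t\|_2/\sqrt{n_x}$) and $b_{i,t}:=\operatorname{sign}\big((\gamma_i z_t)^{j_{i,t}}\big)\in\{-1,+1\}$; all three are $\F_t$-measurable. By the formal version of Lemma~\ref{lem: discretize E1 E2} (which traces back to $\Ec_1=\{\exists\,\gamma\in\Gamma_T:\|\gamma\|_F\ge\delta/2\}$), on $\Ec_{1,i}$ there is a scale $d\gtrsim\delta$ with $d\gamma_i\in\tilde\Gamma_T$, where $\tilde\Gamma_T$ inflates the bound $w_{\max}$ in the definition of $\Gamma_T$ by a term proportional to $b_z\,\epsilon_\gamma\|\gamma\|_F$; crucially this scaling by $\|\gamma\|_F$ lets $\epsilon_\gamma$ be a \emph{fixed}, $\delta$-independent constant $\asymp a_1/(\sqrt{n_x}\,b_z)$, which pins down the net size $v_\gamma$. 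Evaluating the relaxed constraint of $d\gamma_i\in\tilde\Gamma_T$ at $t=km+L_{i,k}$ and coordinate $j=j_{i,t}$, then multiplying by $b_{i,t}$ and using $b_{i,t}\,d(\gamma_i z_t)^j=d\,|(\gamma_i z_t)^j|\ge d\,a_1/\sqrt{n_x}$, yields $b_{i,t}w_t^{j}\ge d\,a_1/\sqrt{n_x}-w_{\max}-O(b_z\epsilon_\gamma d)$; since signal and correction both scale like $d$ and $d\gtrsim\delta$, this is at least $\epsilon-w_{\max}$. Combined with the $k$-th block-PE clause (which appears in $G_{i,k}$ verbatim), this is exactly $G_{i,k}$.

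\emph{Probabilistic bound.} I would first show $\Pb(G_{i,k}\mid\F_{km})\le 1-q_w(\epsilon)$ for each $k$. On $G_{i,k}$ the block-PE clause forces $L_{i,k}\in\{1,\dots,m\}$, so $\one_{G_{i,k}}\le\sum_{l=1}^m\one_{\{L_{i,k}=l\}}\,\one_{B_{km+l}}$ with $B_t:=\{b_{i,t}w_t^{j_{i,t}}\ge\epsilon-w_{\max}\}$. Since $\{L_{i,k}=l\}\in\F_{km+l}$ and $b_{i,km+l},j_{i,km+l}$ are $\F_{km+l}$-measurable while $w_{km+l}$ is independent of $\F_{km+l}$ (Assumption~\ref{ass: on wt} together with the causal structure of $\{z_t\}$), conditioning on $\F_{km+l}$ reduces $\Pb(B_{km+l}\mid\F_{km+l})$ to a marginal tail probability of $w^{j}_{km+l}$ near $\pm w_{\max}$ on the side dictated by $b_{i,km+l}$, which Assumption~\ref{ass: tight bound on wt} bounds by $1-q_w(\epsilon)$ (via one of its two inequalities). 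Summing over the disjoint events $\{L_{i,k}=l\}$ gives the one-block bound. Finally $G_{i,k}\in\F_{(k+1)m}$, so the tower rule peels off the last block, $\Pb\big(\bigcap_{k=0}^{T/m-1}G_{i,k}\big)\le(1-q_w(\epsilon))\,\Pb\big(\bigcap_{k=0}^{T/m-2}G_{i,k}\big)$, and induction over the $T/m$ blocks gives $(1-q_w(\epsilon))^{T/m}$, which together with the inclusion completes the chain of inequalities in the lemma.

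\emph{Where the difficulty lies.} The delicate step is the deterministic inclusion: $L_{i,k},j_{i,t},b_{i,t}$ must be simultaneously $\F_t$-measurable and strong enough that $\Ec_{1,i}\cap\Ec_2$ genuinely forces $G_{i,k}$; the trick that makes this work with a $\delta$-independent net (hence the clean $v_\gamma$) is defining $\tilde\Gamma_T$ with a relaxation proportional to $\|\gamma\|_F$, so that on $\Ec_2$ the discretization error, the a priori boundedness of $\Gamma_T$, and the lower bound $d\gtrsim\delta$ all scale together. The probabilistic half is comparatively routine; its one conceptual point is that $L_{i,k}$ must be a \emph{stopping time} rather than a fixed within-block index — the BMSB/PE guarantee only promises a good direction \emph{somewhere} in each block, and only the stopping-time formulation is compatible with the tower-rule decoupling of the $T/m$ blocks despite $z_t$ depending on the past disturbances.
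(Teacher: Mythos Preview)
Your probabilistic half is essentially the paper's argument: decompose by the value of the stopping time, use that $b_{i,km+l},j_{i,km+l}\in\F_{km+l}$ while $w_{km+l}$ is independent of $\F_{km+l}$, and chain the blocks. One small slip: you claim $G_{i,k}\in\F_{(k+1)m}$, but when $L_{i,k}=m$ the event involves $w_{(k+1)m}$, which is \emph{not} in $\F_{(k+1)m}=\F(w_0,\dots,w_{(k+1)m-1},z_0,\dots,z_{(k+1)m})$. The paper sidesteps this by conditioning directly on $\bigcap_{k'<k}G_{i,k'}$ (and then on the full disturbance history $w_{0:km+l}$) rather than on $\F_{km}$. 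Your version is easily patched by conditioning on $\F_{km+1}$, so this is cosmetic, not structural.

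The deterministic half is where you diverge from the paper. You work from the informal $\Ec_{1,i}=\{\exists\,d\ge 0: d\gamma_i\in\tilde\Gamma_T\}$ and reverse-engineer $\tilde\Gamma_T$ as a $\|\gamma\|_F$-scaled relaxation of $\Gamma_T$, then carry the discretization error $O(b_z\epsilon_\gamma d)$ through to reach $G_{i,k}$. The paper does \emph{not} construct $\tilde\Gamma_T$ at all. Its formal $\Ec_{1,i}$ is
\[
\Ec_{1,i}=\Bigl\{\exists\,\gamma\in\Gamma_T:\ b_{i,km+L_{i,k}}(\gamma z_{km+L_{i,k}})^{j_{i,km+L_{i,k}}}\ge \tfrac{a_1\delta}{4\sqrt{n_x}}\ \forall k\Bigr\},
\]
i.e.\ it keeps the \emph{unrelaxed} membership $\gamma\in\Gamma_T$ and bakes the target inequality directly into the event; the net point $\gamma_i$ enters only through the choice of $b_{i,\cdot},j_{i,\cdot},L_{i,\cdot}$. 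With this definition the inclusion $\Ec_{1,i}\cap\Ec_2\subseteq\bigcap_k G_{i,k}$ is a one-liner: $\gamma\in\Gamma_T$ gives $b(\gamma z_t)^j\le bw_t^j+w_{\max}$, and the built-in lower bound finishes it. All the discretization work (proving that under $\Ec_2$ every $\gamma$ with $\|\gamma\|_F\ge\delta/2$ satisfies the displayed inequality for \emph{some} $i$) is pushed into the formal Lemma~\ref{lem: discretize E1 E2}. Your route is valid---your computation $d\,a_1/\sqrt{n_x}-O(b_z\epsilon_\gamma d)\ge \epsilon$ with $\epsilon_\gamma=a_1/(4b_z\sqrt{n_x})$ and $d\ge\delta/2$ checks out---but it repackages the same ingredients with a less clean split between the two lemmas. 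Minor note: the paper's stopping time uses the $\ell_\infty$ threshold $\|\gamma_i z\|_\infty\ge a_1/\sqrt{n_x}$ rather than your $\ell_2$ threshold $\|\gamma_i z\|_2\ge a_1$; both are stopping times and both fire on each block under $\Ec_2$, so this is a matter of taste.
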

 \vspace{-0.1in}
The constructions of $G_{i,k}$ and $L_{i,k}$ in Lemma \ref{lem: construct Gik and Lik} are our major technical contribution. With the constructions above, the proof can be completed by  leveraging the conditional independence property of stopping times,  which  is briefly discussed below. Notice that by conditioning on the event $\{L_{i,k}=l\}$, we have $ w_{km+L_{i,k}}=w_{km+l}$ and $w_{km+l}$  is independent of  $ \F_{km+l}$. Consequently,  $w_{km+l}$ is also independent of $b_{i, km+L_{i,k}}, j_{i, km+L_{i,k}} $ conditioning on $\{L_{i,k}=l\}$ since $b_{i, km+l}, j_{i, km+l} $ are measurable in $\F_{km+l}$. Therefore, the probability of $G_{i,k}$ conditioning on $\{L_{i,k}=l\}$ can be bounded by the probability distribution of $w_t$, which enjoys good properties such as Assumption \ref{ass: tight bound on wt}. More details of the proof are in Appendix \ref{sec:D33}.

In conclusion, 
Lemma \ref{lem: bound PE and gamma>delta/2} follows directly from Lemma \ref{lem: discretize E1 E2} and Lemma \ref{lem: construct Gik and Lik}. Combining Lemma \ref{lem: bound PE and gamma>delta/2} and Lemma \ref{lem: bound PE c} completes the proof of Theorem \ref{thm: estimation err bdd}.

\begin{remark}[Convergence rate of SME for general time series] Similar to Theorem 2.4 in \cite{simchowitz2018learning}, our results for linear dynamical systems can also be generalized to general time series with linear responses:
$$y_t =\theta^* z_t+w_t, \quad t\geq 0,$$
where $\F_t^y=\F(w_0, \dots, w_{t}, z_0, \dots, z_t)$, $y_t\in \R^{n_y}$ is measurable in $\F^y_t$ but not in $\F^y_{t-1}$. The SME algorithm is 
$$\Theta_T^y=\bigcap_{t=0}^{T-1}\{\hat \theta: y_t-\hat \theta z_t\in\W\}.$$ Under Assumptions \ref{ass: on wt}, \ref{ass: BMSB, bounded xt}, and \ref{ass: tight bound on wt}, we have
\begin{align*}
		\Pb(\diam(&\Theta^y_T)>\delta)\leq  {\frac{T}{m}  \tilde O(n_z^{2.5}) a_2^{n_z} \exp(-a_3 m)}\\
	&	+ {\tilde O((n_yn_z)^{2.5})a_4^{n_yn_z}\left(1-q_w\left(\frac{a_1\delta}{4\sqrt{n_y}}\right)\right)^{\ceil{T/m}} },
	\end{align*}
 where $a_1,a_2, a_3$ are defined in Theorem \ref{thm: estimation err bdd} and $a_4=\frac{4b_z \sqrt{n_y}}{a_1}$.

\end{remark}

\section{Applications to Robust Adaptive  Control}
\label{sec:application}

Robust adaptive control usually involves two steps: updating the uncertainty set estimation, and designing robust controllers based on the updated uncertainty set. SME can be naturally applied to robust adaptive control as the updating rule of the uncertainty set estimation. To illustrate this, we discuss the applications of SME to two popular controllers, robust adaptive MPC and robust SLS. We focus on the implications of our convergence rates. 

\textbf{Application of SME to robust adaptive MPC.} SME has long been adopted in the robust adaptive MPC design (see e.g., \cite{kohler2019linear,lorenzen2019robust,lu2023robust}). Despite the regret analysis for unconstrained MPC and its variants (e.g. \cite{zhang2021regret,li2019online}), the non-asymptotic analysis for robust adaptive MPC remains unsolved.  
Applying \Cref{thm: estimation err bdd} straightforwardly, we can obtain a non-asymptotic estimation error bound  for robust adaptive MPC below, which lays a foundation for future regret analysis. For simplicity, we consider a tight bound $\W$ is known below, but our results for unknown $\W$ can also be applied similarly.


\begin{corollary}
Consider the robust adaptive MPC controller introduced in Example \ref{example: rampc}, where $\Theta_t$ is updated by SME and $\W$ is known.\footnote{When $\W$ is unknown, Theorems \ref{thm: loose bound}-\ref{thm: convergence of ucb-sme} all apply.} Under the conditions of Corollary \ref{cor: uniform dist est. bdd}, the estimation error for any $\hat \theta_T \in \Theta_T$ can be bounded by  $\|\hat \theta_T-\theta^*\|_F \leq \tilde O(\frac{n_x^{1.5}n_z^2}{T})$ with high probability. 
 \label{cor:rmpc}
\end{corollary}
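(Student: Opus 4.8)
The plan is to reduce \Cref{cor:rmpc} to a direct invocation of \Cref{cor: uniform dist est. bdd}. Since \Cref{cor: uniform dist est. bdd} already presupposes \Cref{ass: on wt} and \Cref{ass: tight bound on wt} with $q_w(\epsilon)=\Omega(\epsilon)$, the only thing left to verify is that the closed loop generated by the randomly-perturbed robust adaptive MPC controller $u_t=\pi_{\textup{RMPC}}(x_t;\Theta_t,\W)+\eta_t$ of \Cref{example: rampc} satisfies \Cref{ass: BMSB, bounded xt}, i.e.\ BMSB of $\{z_t\}$ with respect to $\F_t=\F(w_0,\dots,w_{t-1},z_0,\dots,z_t)$ together with an almost-sure bound $\|z_t\|_2\le b_z$. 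A preliminary observation I would record first is that the SME recursion keeps $\theta^*\in\Theta_t\subseteq\Theta_0$ at every step: the inclusion holds because $\Theta_t$ is intersected with the a priori set $\Theta_0$, and $\theta^*\in\Theta_t$ because $w_s\in\W$ for all $s$ (cf.\ the argument below \eqref{eq:membership_set}).

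For boundedness, note that $\pi_{\textup{RMPC}}(\cdot;\Theta_t,\W)$ enforces the robust constraint \eqref{equ: robust constraint} for every $\theta\in\Theta_t$, in particular for $\theta^*$, so along the realized trajectory the nominal pair $(x_t^\top,\pi_{\textup{RMPC}}(x_t;\Theta_t,\W)^\top)^\top$ stays in the bounded safe set $\mathbb Z_{\textup{safe}}$; since $\eta_t$ is uniformly bounded, say $\|\eta_t\|_2\le\bar\eta$, this yields a deterministic $b_z$ with $\|z_t\|_2\le b_z$ a.s. The only subtlety here is recursive feasibility of the \emph{perturbed} scheme, which I would handle exactly as in \cite{li2023non}: enlarge the nominal disturbance set from $\W$ to $\W\oplus\{\eta:\|\eta\|_2\le\bar\eta\}$ in the tube design so that $\eta_t$ is absorbed as an additional disturbance.

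For BMSB, fix a unit vector $\lambda=(\lambda_x^\top,\lambda_u^\top)^\top\in\R^{n_z}$ and condition on $\F_t$. The structural fact I would exploit is that, given $\F_t$, the scalar $\lambda^\top z_{t+1}$ is a deterministic function of the two fresh noises $w_t$ and $\eta_{t+1}$, which are mutually independent and independent of $\F_t$: indeed $\lambda^\top z_{t+1}=\lambda_x^\top\theta^* z_t+\lambda_x^\top w_t+\lambda_u^\top\pi_{\textup{RMPC}}(x_{t+1};\Theta_{t+1},\W)+\lambda_u^\top\eta_{t+1}$ with $x_{t+1}=\theta^* z_t+w_t$ and $\Theta_{t+1}$ a function of $(\F_t,w_t)$. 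I would then split into two regimes. If $\|\lambda_u\|_2\ge c_0$ for a small absolute constant $c_0$, condition additionally on $w_t$: then $\lambda^\top z_{t+1}$ is an $(\F_t\vee\sigma(w_t))$-measurable constant plus $\lambda_u^\top\eta_{t+1}$, and a Paley--Zygmund/anti-concentration bound (as used for BMSB verification in \cite{simchowitz2018learning,li2023non}), valid because $\eta_{t+1}$ is bounded with positive definite covariance, gives $\Pb(|\lambda^\top z_{t+1}|\ge c_1\|\lambda_u\|_2\mid\F_t,w_t)\ge p_1$; integrating over $w_t$ preserves it. If $\|\lambda_u\|_2<c_0$, then $\|\lambda_x\|_2\ge\sqrt{1-c_0^2}$, the MPC term and the $\eta_{t+1}$ term together are bounded by $c_0(b_z+\bar\eta)$, and $\lambda_x^\top w_t$ has second moment $\lambda_x^\top\Sigma_w\lambda_x\ge\sigma_{\min}(\Sigma_w)(1-c_0^2)$, so anti-concentration of $\lambda_x^\top\theta^* z_t+\lambda_x^\top w_t$ combined with a choice of $c_0$ small enough to make $c_0(b_z+\bar\eta)$ below half the resulting threshold yields $\Pb(|\lambda^\top z_{t+1}|\ge c_2\mid\F_t)\ge p_2$. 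Taking $\sigma_z^2=\min(c_1^2c_0^2,c_2^2)$ and $p_z=\min(p_1,p_2)$ establishes $(1,\sigma_z^2 I_{n_z},p_z)$-BMSB; this is the argument of \cite{li2023non}.

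With all three assumptions verified, \Cref{cor: uniform dist est. bdd} applies directly: taking $m\ge O(n_z+\log T+\log(1/\epsilon))$ and $\epsilon$ a fixed small constant (or inverse-polynomial in $T$, which only affects the suppressed logarithmic factors), with probability at least $1-2\epsilon$ we obtain $\|\hat\theta_T-\theta^*\|_F\le\diam(\Theta_T)\le\tilde O(n_x^{1.5}(n_x+n_u)^2/T)=\tilde O(n_x^{1.5}n_z^2/T)$, which is the claim. The step I expect to be the main obstacle is the BMSB verification, because $\pi_{\textup{RMPC}}$ is only implicitly defined (the solution of a robust optimal control problem, not a linear feedback law) and the set $\Theta_t$ driving it is itself data-dependent and time-varying; the resolution built into the plan is that this entire dependence is $\F_t$-measurable once $w_t$ is revealed, so the small-ball lower bound is inherited from the injected randomness uniformly over the MPC map and over the trajectory of $\{\Theta_t\}$, while the time variation of $\Theta_t$ is harmless for boundedness precisely because $\theta^*\in\Theta_t$ keeps the realized closed loop inside $\mathbb Z_{\textup{safe}}$.
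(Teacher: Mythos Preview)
Your proposal is correct and follows the same approach as the paper: verify that the randomly-perturbed robust adaptive MPC controller of \Cref{example: rampc} satisfies \Cref{ass: BMSB, bounded xt} (boundedness from the robust constraint enforcement and $\theta^*\in\Theta_t$, BMSB from the injected noise via \cite{li2023non}), then invoke \Cref{cor: uniform dist est. bdd} directly. The paper itself treats this corollary as an immediate consequence and defers the BMSB verification entirely to \cite{li2023non}, whereas you spell out the two-regime small-ball argument in more detail; but the route is the same.
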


\textbf{Application of SME to robust SLS.} Robust SLS has been proposed in \cite{dean2019safely} for robust constrained control under system uncertainties  \cite{dean2019safely}. Since \cite{dean2019safely} assumes bounded disturbances, one can apply SME for the uncertainty set estimation in place of the LSE's confidence regions in \cite{dean2019safely}. Then, by leveraging Theorems 3.1, 4.1 in \cite{dean2019safely} and our Theorem \ref{thm: estimation err bdd}, we can directly obtain a non-asymptotic sub-optimality gap for learning-based robust SLS with SME as the uncertainty set estimation. For simplicity, we consider a known tight bound $\W$, but our results for unknown $\W$ can also be similarly applied here.
\begin{corollary}
    Under the conditions in Theorem 3.1 in \cite{dean2019safely} and Corollary \ref{cor: uniform dist est. bdd}, for large enough $T$, we have  $ \frac{J(A^*, B^*, \hat{\mathbf K})- J^*}{J^*} \leq \tilde O(n_x^{1.5}n_z^2/T)$,   where $\hat{\mathbf K}$  denotes the robust SLS controller in \cite{dean2019safely} under the uncertainty set $\Theta_T$ constructed by SME, $J(A^*, B^*, \hat{\mathbf K})=\lim_{T\to +\infty}\E\frac{1}{T} \sum_{t=0}^{T-1}(x_t^\top Q x_t + u_t^\top R u_t)$ denotes the infinite-horizon averaged total cost by implementing the robust SLS controller $\hat{\mathbf K}$, and $J^*$ denotes the optimal infinite-horizon averaged total cost.
   
\end{corollary}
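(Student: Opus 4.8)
The plan is to obtain this corollary as a direct composition of two ingredients: our non-asymptotic control of the size of the SME uncertainty set (\Cref{cor: uniform dist est. bdd}) and the robustness guarantee of the robust SLS synthesis of \cite{dean2019safely}, which bounds the relative sub-optimality of the synthesized controller in terms of the radius of the uncertainty set it is robustified against. Since $\theta^* \in \Theta_T$ always holds for SME, \Cref{cor: uniform dist est. bdd} implies that, with probability at least $1-2\epsilon$, the whole set $\Theta_T$ is contained in a Frobenius ball around $\theta^*$ of radius $\rho_T = \tilde O(n_x^{1.5} n_z^2 / T)$; splitting $\theta = (A,B)$, this yields uniform component-wise bounds $\|\hat A - A^*\| \le \rho_T$ and $\|\hat B - B^*\| \le \rho_T$ over all $(\hat A, \hat B) \in \Theta_T$, which is the form of uncertainty description used by the robust synthesis in \cite{dean2019safely} (or an over-approximation of it, obtained by enclosing the polytopic $\Theta_T$ in that Frobenius ball; this is harmless since synthesizing against a larger set can only degrade the controller and the sub-optimality bound is monotone in the uncertainty radius).

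Next I would invoke Theorem~3.1 (and its constrained counterpart Theorem~4.1) of \cite{dean2019safely}: whenever the uncertainty radius lies below an instance-dependent threshold, the robust SLS program is feasible, the resulting controller $\hat{\mathbf K}$ robustly satisfies the safety constraints, and it obeys a relative sub-optimality bound that grows at most linearly in the uncertainty radius, with a proportionality constant depending only on the problem data $(Q, R, A^*, B^*, \mathbb Z_{\textup{safe}})$. Because $\rho_T \to 0$ as $T \to \infty$, for all sufficiently large $T$ the radius $\rho_T$ falls below this threshold, so on the high-probability event of \Cref{cor: uniform dist est. bdd} the theorem applies; substituting $\rho_T = \tilde O(n_x^{1.5} n_z^2 / T)$ into its bound and absorbing the $O(1)$ proportionality constant and the logarithmic factors into $\tilde O(\cdot)$ gives $\frac{J(A^*, B^*, \hat{\mathbf K}) - J^*}{J^*} \le \tilde O(n_x^{1.5} n_z^2 / T)$. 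If the sub-optimality of \cite{dean2019safely} happens to be quadratic rather than linear in the radius, the conclusion only becomes stronger and still implies the stated bound for large $T$.

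The main obstacle is not a deep argument but threading the qualifiers consistently: one must (i) verify that the standing assumptions behind \Cref{cor: uniform dist est. bdd} remain in force for the closed loop driven by $\hat{\mathbf K}$ --- in particular Assumption~\ref{ass: BMSB, bounded xt}, which, exactly as in \Cref{example: rampc}, can be secured by superposing a small bounded i.i.d.\ excitation on the robust SLS input so that $\{z_t\}$ satisfies BMSB while the robust constraints keep $z_t$ bounded; and (ii) reconcile the ``large enough $T$'' needed for $\rho_T$ to clear the feasibility threshold of the robust synthesis with the sample-size conditions already required by \Cref{cor: uniform dist est. bdd}, taking the larger of the two. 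Once these are aligned, the sub-optimality bound follows immediately from the monotone substitution above.
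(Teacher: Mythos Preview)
Your proposal is correct and takes essentially the same approach as the paper: the corollary is stated there without a detailed proof, simply as a direct consequence of combining the SME diameter bound (via \Cref{thm: estimation err bdd}/\Cref{cor: uniform dist est. bdd}) with Theorems~3.1 and~4.1 of \cite{dean2019safely}. Your additional care about threading the assumptions (BMSB via exploration noise, reconciling the ``large enough $T$'' thresholds, and the over-approximation of $\Theta_T$ by a Frobenius ball) goes beyond what the paper spells out but is exactly the right way to make the informal claim rigorous.
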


\vspace{-0.1in}
\section{{Numerical Experiments}}\label{sec: numerical experiments}

We evaluate the empirical performance of SME on various systems and applications. For all experiments, we use the $90\%$ confidence regions of LSE computed by Lemma E.3 in \cite{simchowitz2020naive} and Theorem 1 in \cite{abbasi2011regret} as the baseline. The details of the simulation settings are provided in Appendix \ref{append: simulation}.\footnote{The code to reproduce all the experimental results can be found at \url{https://github.com/jy-cds/non-asymptotic-set-membership}.}

\paragraph{Comparison of SME, SME with loose bound, UCB-SME, and LSE.}  This experiment is  based on the linearized longitudinal flight control dynamics of Boeing 747 as studied in recent literature on learning-based control of linear systems \cite{lale2022reinforcement, mete2022augmented}.

In \Cref{fig:loose_bound}, we show the diameters of SME, SME with loose disturbance bounds, and UCB-SME on the identification problem of the Boeing 747 dynamics with i.i.d. truncated Gaussian (\Cref{fig:loose_gaussian}) and uniform (\Cref{fig:loose_uniform}) disturbances. We use control actions sampled from a uniform distribution in both cases. In \Cref{fig:learning_wmax}, we show that both  the upper bound $\hat w_{\max}$ used for UCB-SME and the lower bound $\bar w_{\max}$ in \eqref{equ: define wbarmax} converge to the true bound $w_{\max}$ as $T$ increases.
The quantitative behaviors of SME and its variants are consistent with those predicted by our theoretical results. {In particular, in  \Cref{fig:loose_gaussian} and \Cref{fig:loose_uniform},  SME and UCB-SME outperform the $90\%$ confidence regions of  LSE in both the magnitude and the convergence rate. In \Cref{fig:learning_wmax}, we verify that the UCB estimation $\hat w^{(T)}_{\max}$ converges to the true disturbance bound $w_{\max}$ from above, while the estimation $\bar w^{(T)}_{\max}$ converges from below. It is worth noting that $\bar w^{(T)}_{\max}$ converges to $w_{\max}$ very quickly in the  simulations, allowing $\bar w^{(T)}_{\max}$ to be another potential approximation of $w_{\max}$ for SME when $T$ is very large.}

\begin{figure}
	\centering
\includegraphics[width=0.45\textwidth,trim=0cm 0.1cm 0cm 0.1cm,clip]{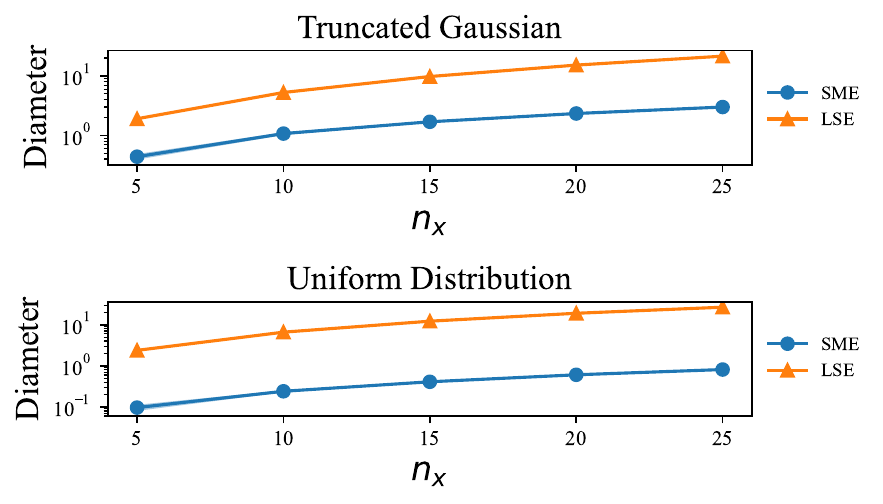}
\caption{{Diameters of the uncertainty sets constructed by SME, UCB-SME, and LSE for systems with different  dimensions.}}
		\label{fig:dimension_gaussian}
\end{figure}

\vspace{-0.12in}
\paragraph{Scaling with dimension.}
We compare the scaling of SME, SME-UCB, and LSE with respect to the system dimensions in \Cref{fig:dimension_gaussian}. We use an autonomous system $x_{t+1} = A^\star x_{t} + w_{t}$, where $A \in \mathbb{R}^{n_x\times n_x}$ has varying $n_x$. Disturbances $w_t$ are sampled from a truncated Gaussian distribution  and uniform distribution  with $w_{\max}=2$. {Surprisingly, the scaling of SME with respect to the dimension of the system is not significantly worse than that of LSE in the simulation. This suggests that the  convergence rate in \Cref{cor: uniform dist est. bdd} can potentially be  improved in terms of the dimension dependence, which is left for future investigation.} 

\begin{figure}[ht]
    \centering
     \includegraphics[width=0.38\textwidth, trim=0cm 6cm 0cm 7.2cm,clip]{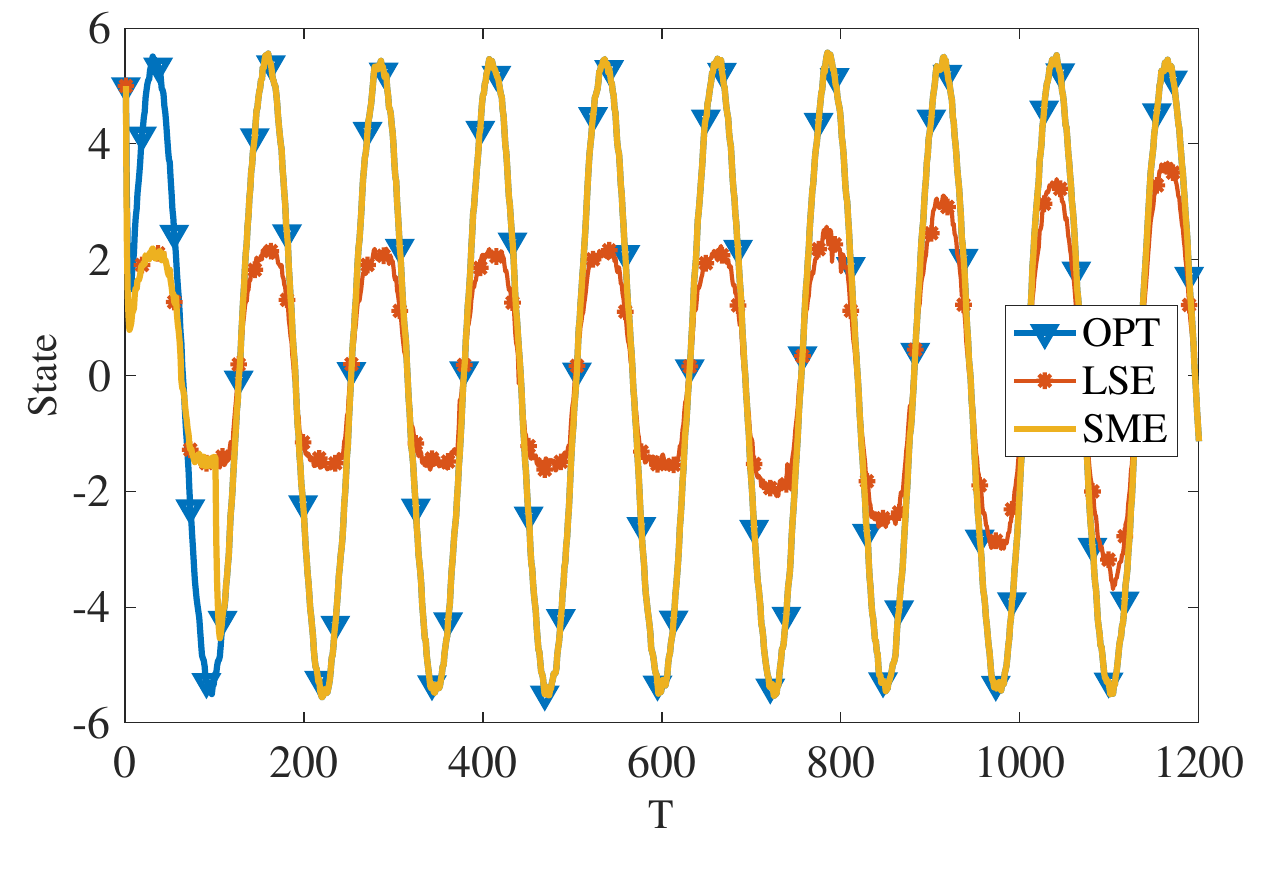}
    \vspace{-0.15cm}
    \caption{Linear quadratic tracking  of robust adaptive MPC based on SME, LSE's confidence regions, and the accurate model (OPT).}
    \label{fig:ra-mpc}
\end{figure}

\textbf{Application to robust adaptive MPC. } We provide an example of the quantitative impact of using SME for adaptive robust MPC in \Cref{fig:ra-mpc}. We consider the task of constrained linear quadratic tracking problem as in \cite{li2023non}. The model uncertainty set is estimated online with SME and LSE's 90 \% confidence region. Control actions are computed using the  tube-based robust MPC \cite{rawlings2017model,mayne2005robust} with the uncertainty sets. We also plot the  optimal MPC controller with accurate model information. Thanks to the fast convergence of  SME, the tracking performance of the tube-based robust MPC with SME estimation quickly coincides with OPT, while the same controller based on LSE's confidence region estimation converges more slowly.

\vspace{-10pt}

\section{Concluding Remarks}
\vspace{-5pt}

This work provides the first convergence rates for SME in linear dynamical systems with bounded disturbances and discusses variants of SME with unknown bound on $w_t$. Numerical experiments demonstrate SME's promising performance under bounded disturbances.

\vspace{-1pt}
Regarding future directions, this work only considers box constraints on $w_t$, so it is worth extending the analysis to more general constraints. This paper only measures the size of the uncertainty sets by their diameters. We leave for future work to consider other metrics, such as volume. Further, our bounds suffer poor dependence on the system dimension, which is not reflected in simulations. Hence, it is important to refine the bounds and discuss the fundamental limits. Another exciting direction is to speed up the computation of SME since the current computation complexity increases linearly with the sample size. The convergence rate of such algorithms is an important \textit{open question}. Other interesting directions include the extensions of the SME analysis to nonlinear systems, where recent nonlinear system identification literature \cite{sattar2022finite,foster2020learning} may provide insights; and analyzing SME in the presence of other uncertainties, e.g. measurement noises \cite{sarkar2019near}.

\vspace{-1pt}
SME is a valid estimation for bounded {non-stochastic} disturbances \cite{fogel1982value,milanese2013bounding, lauricella2020set,livstone1996asymptotic}. Thus, a fruitful direction is to study SME's convergence rates under non-stochastic $w_t$. Another  method for uncertainty set estimation is the credible regions of Bayesian approaches, e.g. Thompson sampling for linear systems \cite{kargin2022thompson,abeille2017thompson} and Gaussian processes for nonlinear systems \cite{fisac2018general}. A future direction is to study the convergence rates of credible regions.

\section*{Impact Statement}
This paper presents work whose goal is to advance the field of 
Machine Learning. There are many potential societal consequences 
of our work, none of which we feel must be specifically highlighted here.

\bibliography{citation4setmember}

\begin{thebibliography}{91}
\providecommand{\natexlab}[1]{#1}
\providecommand{\url}[1]{\texttt{#1}}
\expandafter\ifx\csname urlstyle\endcsname\relax
  \providecommand{\doi}[1]{doi: #1}\else
  \providecommand{\doi}{doi: \begingroup \urlstyle{rm}\Url}\fi

\bibitem[Abbasi-Yadkori \& Szepesv{\'a}ri(2011)Abbasi-Yadkori and Szepesv{\'a}ri]{abbasi2011regret}
Abbasi-Yadkori, Y. and Szepesv{\'a}ri, C.
\newblock Regret bounds for the adaptive control of linear quadratic systems.
\newblock In \emph{Proceedings of the 24th Annual Conference on Learning Theory}, pp.\  1--26. JMLR Workshop and Conference Proceedings, 2011.

\bibitem[Abbasi-Yadkori et~al.(2011)Abbasi-Yadkori, P{\'a}l, and Szepesv{\'a}ri]{abbasi2011online}
Abbasi-Yadkori, Y., P{\'a}l, D., and Szepesv{\'a}ri, C.
\newblock Online least squares estimation with self-normalized processes: An application to bandit problems.
\newblock \emph{arXiv preprint arXiv:1102.2670}, 2011.

\bibitem[Abeille \& Lazaric(2017)Abeille and Lazaric]{abeille2017thompson}
Abeille, M. and Lazaric, A.
\newblock Thompson sampling for linear-quadratic control problems.
\newblock In \emph{Artificial intelligence and statistics}, pp.\  1246--1254. PMLR, 2017.

\bibitem[Adetola \& Guay(2011)Adetola and Guay]{adetola2011robust}
Adetola, V. and Guay, M.
\newblock Robust adaptive mpc for constrained uncertain nonlinear systems.
\newblock \emph{International Journal of Adaptive Control and Signal Processing}, 25\penalty0 (2):\penalty0 155--167, 2011.

\bibitem[Ak{\c{c}}ay(2004)]{akccay2004size}
Ak{\c{c}}ay, H.
\newblock The size of the membership-set in a probabilistic framework.
\newblock \emph{Automatica}, 40\penalty0 (2):\penalty0 253--260, 2004.

\bibitem[Bai et~al.(1995)Bai, Tempo, and Cho]{bai1995membership}
Bai, E.-W., Tempo, R., and Cho, H.
\newblock Membership set estimators: size, optimal inputs, complexity and relations with least squares.
\newblock \emph{IEEE Transactions on Circuits and Systems I: Fundamental Theory and Applications}, 42\penalty0 (5):\penalty0 266--277, 1995.

\bibitem[Bai et~al.(1998)Bai, Cho, and Tempo]{bai1998convergence}
Bai, E.-W., Cho, H., and Tempo, R.
\newblock Convergence properties of the membership set.
\newblock \emph{Automatica}, 34\penalty0 (10):\penalty0 1245--1249, 1998.

\bibitem[Benevides et~al.(2022)Benevides, Paiva, Simpl{\'\i}cio, Inoue, and Terra]{benevides2022disturbance}
Benevides, J.~R., Paiva, M.~A., Simpl{\'\i}cio, P.~V., Inoue, R.~S., and Terra, M.~H.
\newblock Disturbance observer-based robust control of a quadrotor subject to parametric uncertainties and wind disturbance.
\newblock \emph{IEEE Access}, 10:\penalty0 7554--7565, 2022.

\bibitem[Bertsekas(1971)]{bertsekas1971control}
Bertsekas, D.~P.
\newblock \emph{Control of uncertain systems with a set-membership description of the uncertainty.}
\newblock PhD thesis, Massachusetts Institute of Technology, 1971.

\bibitem[Boyd \& Vandenberghe(2004)Boyd and Vandenberghe]{boyd2004convex}
Boyd, S. and Vandenberghe, L.
\newblock \emph{Convex optimization}.
\newblock Cambridge university press, 2004.

\bibitem[Brunke et~al.(2022)Brunke, Greeff, Hall, Yuan, Zhou, Panerati, and Schoellig]{brunke2022safe}
Brunke, L., Greeff, M., Hall, A.~W., Yuan, Z., Zhou, S., Panerati, J., and Schoellig, A.~P.
\newblock Safe learning in robotics: From learning-based control to safe reinforcement learning.
\newblock \emph{Annual Review of Control, Robotics, and Autonomous Systems}, 5:\penalty0 411--444, 2022.

\bibitem[Bujarbaruah et~al.(2020)Bujarbaruah, Zhang, Tanaskovic, and Borrelli]{bujarbaruah2020adaptive}
Bujarbaruah, M., Zhang, X., Tanaskovic, M., and Borrelli, F.
\newblock Adaptive stochastic mpc under time-varying uncertainty.
\newblock \emph{IEEE Transactions on Automatic Control}, 66\penalty0 (6):\penalty0 2840--2845, 2020.

\bibitem[Campi \& Weyer(2002)Campi and Weyer]{campi2002finite}
Campi, M.~C. and Weyer, E.
\newblock Finite sample properties of system identification methods.
\newblock \emph{IEEE Transactions on Automatic Control}, 47\penalty0 (8):\penalty0 1329--1334, 2002.

\bibitem[Chang \& Shahrampour(2024)Chang and Shahrampour]{Chang2024regret}
Chang, T.-J. and Shahrampour, S.
\newblock Regret analysis of distributed online lqr control for unknown {LTI} systems.
\newblock \emph{IEEE Transactions on Automatic Control}, 69\penalty0 (1):\penalty0 667--673, 2024.

\bibitem[Chen \& Hazan(2021)Chen and Hazan]{chen2021black}
Chen, X. and Hazan, E.
\newblock Black-box control for linear dynamical systems.
\newblock In \emph{Conference on Learning Theory}, pp.\  1114--1143. PMLR, 2021.

\bibitem[Dean et~al.(2018)Dean, Mania, Matni, Recht, and Tu]{dean2018regret}
Dean, S., Mania, H., Matni, N., Recht, B., and Tu, S.
\newblock Regret bounds for robust adaptive control of the linear quadratic regulator.
\newblock \emph{Advances in Neural Information Processing Systems}, 31:\penalty0 4188--4197, 2018.

\bibitem[Dean et~al.(2019{\natexlab{a}})Dean, Mania, Matni, Recht, and Tu]{dean2019sample}
Dean, S., Mania, H., Matni, N., Recht, B., and Tu, S.
\newblock On the sample complexity of the linear quadratic regulator.
\newblock \emph{Foundations of Computational Mathematics}, pp.\  1--47, 2019{\natexlab{a}}.

\bibitem[Dean et~al.(2019{\natexlab{b}})Dean, Tu, Matni, and Recht]{dean2019safely}
Dean, S., Tu, S., Matni, N., and Recht, B.
\newblock Safely learning to control the constrained linear quadratic regulator.
\newblock In \emph{2019 American Control Conference (ACC)}, pp.\  5582--5588. IEEE, 2019{\natexlab{b}}.

\bibitem[Eising \& Cortes(2023)Eising and Cortes]{eising2023sampling}
Eising, J. and Cortes, J.
\newblock When sampling works in data-driven control: Informativity for stabilization in continuous time.
\newblock \emph{arXiv preprint arXiv:2301.10873}, 2023.

\bibitem[Eising et~al.(2022)Eising, Liu, Mart{\'\i}nez, and Cort{\'e}s]{eising2022using}
Eising, J., Liu, S., Mart{\'\i}nez, S., and Cort{\'e}s, J.
\newblock Using data informativity for online stabilization of unknown switched linear systems.
\newblock In \emph{2022 IEEE 61st Conference on Decision and Control (CDC)}, pp.\  8--13. IEEE, 2022.

\bibitem[Faradonbeh et~al.(2018)Faradonbeh, Tewari, and Michailidis]{faradonbeh2018finite}
Faradonbeh, M. K.~S., Tewari, A., and Michailidis, G.
\newblock Finite-time adaptive stabilization of linear systems.
\newblock \emph{IEEE Transactions on Automatic Control}, 64\penalty0 (8):\penalty0 3498--3505, 2018.

\bibitem[Fisac et~al.(2018)Fisac, Akametalu, Zeilinger, Kaynama, Gillula, and Tomlin]{fisac2018general}
Fisac, J.~F., Akametalu, A.~K., Zeilinger, M.~N., Kaynama, S., Gillula, J., and Tomlin, C.~J.
\newblock A general safety framework for learning-based control in uncertain robotic systems.
\newblock \emph{IEEE Transactions on Automatic Control}, 64\penalty0 (7):\penalty0 2737--2752, 2018.

\bibitem[Fogel \& Huang(1982)Fogel and Huang]{fogel1982value}
Fogel, E. and Huang, Y.-F.
\newblock On the value of information in system identification—bounded noise case.
\newblock \emph{Automatica}, 18\penalty0 (2):\penalty0 229--238, 1982.

\bibitem[Foster et~al.(2020)Foster, Sarkar, and Rakhlin]{foster2020learning}
Foster, D., Sarkar, T., and Rakhlin, A.
\newblock Learning nonlinear dynamical systems from a single trajectory.
\newblock In \emph{Learning for Dynamics and Control}, pp.\  851--861. PMLR, 2020.

\bibitem[Gao et~al.(2024)Gao, Tang, Qi, and Yang]{gao2024closure}
Gao, Y., Tang, Y., Qi, H., and Yang, H.
\newblock Closure: Fast quantification of pose uncertainty sets.
\newblock \emph{arXiv preprint arXiv:2403.09990}, 2024.

\bibitem[Hespanha(2018)]{hespanha2018linear}
Hespanha, J.~P.
\newblock \emph{Linear systems theory}.
\newblock Princeton university press, 2018.

\bibitem[Hewing et~al.(2020)Hewing, Wabersich, Menner, and Zeilinger]{hewing2020learning}
Hewing, L., Wabersich, K.~P., Menner, M., and Zeilinger, M.~N.
\newblock Learning-based model predictive control: Toward safe learning in control.
\newblock \emph{Annual Review of Control, Robotics, and Autonomous Systems}, 3:\penalty0 269--296, 2020.

\bibitem[Ho et~al.(2021)Ho, Le, Doyle, and Yue]{ho2021online}
Ho, D., Le, H., Doyle, J., and Yue, Y.
\newblock Online robust control of nonlinear systems with large uncertainty.
\newblock In \emph{Proceedings of The 24th International Conference on Artificial Intelligence and Statistics (AISTATS)}, volume 130, pp.\  3475--3483. PMLR, 4 2021.

\bibitem[Hsu et~al.(2012)Hsu, Kakade, and Zhang]{hsu2012random}
Hsu, D., Kakade, S.~M., and Zhang, T.
\newblock Random design analysis of ridge regression.
\newblock In \emph{Conference on learning theory}, pp.\  9--1. JMLR Workshop and Conference Proceedings, 2012.

\bibitem[Kargin et~al.(2022)Kargin, Lale, Azizzadenesheli, Anandkumar, and Hassibi]{kargin2022thompson}
Kargin, T., Lale, S., Azizzadenesheli, K., Anandkumar, A., and Hassibi, B.
\newblock Thompson sampling achieves $\tilde o \sqrt t$ regret in linear quadratic control.
\newblock In \emph{Conference on Learning Theory}, pp.\  3235--3284. PMLR, 2022.

\bibitem[Kitamura \& Fujisaki(2003)Kitamura and Fujisaki]{kitamura2003convergence}
Kitamura, W. and Fujisaki, Y.
\newblock Convergence properties of the membership set in the presence of disturbance and parameter uncertainty.
\newblock \emph{Transactions of the Society of Instrument and Control Engineers}, 39\penalty0 (4):\penalty0 382--387, 2003.

\bibitem[Kitamura et~al.(2005)Kitamura, Fujisaki, and Bai]{kitamura2005size}
Kitamura, W., Fujisaki, Y., and Bai, E.-W.
\newblock The size of the membership set in the presence of disturbance and parameter uncertainty.
\newblock In \emph{Proceedings of the 44th IEEE Conference on Decision and Control}, pp.\  5698--5703. IEEE, 2005.

\bibitem[K{\"o}hler et~al.(2019)K{\"o}hler, Andina, Soloperto, M{\"u}ller, and Allg{\"o}wer]{kohler2019linear}
K{\"o}hler, J., Andina, E., Soloperto, R., M{\"u}ller, M.~A., and Allg{\"o}wer, F.
\newblock Linear robust adaptive model predictive control: Computational complexity and conservatism.
\newblock In \emph{2019 IEEE 58th Conference on Decision and Control (CDC)}, pp.\  1383--1388. IEEE, 2019.

\bibitem[Lale et~al.(2022)Lale, Azizzadenesheli, Hassibi, and Anandkumar]{lale2022reinforcement}
Lale, S., Azizzadenesheli, K., Hassibi, B., and Anandkumar, A.
\newblock Reinforcement learning with fast stabilization in linear dynamical systems.
\newblock In \emph{International Conference on Artificial Intelligence and Statistics}, pp.\  5354--5390. PMLR, 2022.

\bibitem[Lauricella \& Fagiano(2020)Lauricella and Fagiano]{lauricella2020set}
Lauricella, M. and Fagiano, L.
\newblock Set membership identification of linear systems with guaranteed simulation accuracy.
\newblock \emph{IEEE Transactions on Automatic Control}, 65\penalty0 (12):\penalty0 5189--5204, 2020.

\bibitem[Li et~al.(2019)Li, Chen, and Li]{li2019online}
Li, Y., Chen, X., and Li, N.
\newblock Online optimal control with linear dynamics and predictions: Algorithms and regret analysis.
\newblock In \emph{Advances in Neural Information Processing Systems}, pp.\  14887--14899, 2019.

\bibitem[Li et~al.(2021{\natexlab{a}})Li, Das, and Li]{li2021online}
Li, Y., Das, S., and Li, N.
\newblock Online optimal control with affine constraints.
\newblock In \emph{Proceedings of the AAAI Conference on Artificial Intelligence}, volume~35, pp.\  8527--8537, 2021{\natexlab{a}}.

\bibitem[Li et~al.(2021{\natexlab{b}})Li, Das, Shamma, and Li]{li2021safe}
Li, Y., Das, S., Shamma, J., and Li, N.
\newblock Safe adaptive learning-based control for constrained linear quadratic regulators with regret guarantees.
\newblock \emph{arXiv preprint arXiv:2111.00411}, 2021{\natexlab{b}}.

\bibitem[Li et~al.(2021{\natexlab{c}})Li, Tang, Zhang, and Li]{li2021distributed}
Li, Y., Tang, Y., Zhang, R., and Li, N.
\newblock Distributed reinforcement learning for decentralized linear quadratic control: A derivative-free policy optimization approach.
\newblock \emph{IEEE Transactions on Automatic Control}, 67\penalty0 (12):\penalty0 6429--6444, 2021{\natexlab{c}}.

\bibitem[Li et~al.(2023{\natexlab{a}})Li, Preiss, Li, Lin, Wierman, and Shamma]{li2023online}
Li, Y., Preiss, J.~A., Li, N., Lin, Y., Wierman, A., and Shamma, J.~S.
\newblock Online switching control with stability and regret guarantees.
\newblock In \emph{Learning for Dynamics and Control Conference}, pp.\  1138--1151. PMLR, 2023{\natexlab{a}}.

\bibitem[Li et~al.(2023{\natexlab{b}})Li, Zhang, Das, Shamma, and Li]{li2023non}
Li, Y., Zhang, T., Das, S., Shamma, J., and Li, N.
\newblock Non-asymptotic system identification for linear systems with nonlinear policies.
\newblock \emph{arXiv preprint arXiv:2306.10369}, 2023{\natexlab{b}}.

\bibitem[Liu et~al.(2023)Liu, Yang, and Ying]{liu2023online}
Liu, X., Yang, Z., and Ying, L.
\newblock Online nonstochastic control with adversarial and static constraints.
\newblock \emph{arXiv preprint arXiv:2302.02426}, 2023.

\bibitem[Liu et~al.(2016)Liu, Zhao, and Wu]{liu2016ellipsoidal}
Liu, Y., Zhao, Y., and Wu, F.
\newblock Ellipsoidal state-bounding-based set-membership estimation for linear system with unknown-but-bounded disturbances.
\newblock \emph{IET Control Theory \& Applications}, 10\penalty0 (4):\penalty0 431--442, 2016.

\bibitem[Livstone \& Dahleh(1996)Livstone and Dahleh]{livstone1996asymptotic}
Livstone, M.~M. and Dahleh, M.~A.
\newblock Asymptotic properties of set membership identification algorithms.
\newblock \emph{Systems \& control letters}, 27\penalty0 (3):\penalty0 145--155, 1996.

\bibitem[Ljung(1998)]{ljung1998system}
Ljung, L.
\newblock \emph{System identification}.
\newblock Springer, 1998.

\bibitem[Lopez et~al.(2020)Lopez, Slotine, and How]{lopez2020robust}
Lopez, B.~T., Slotine, J.-J.~E., and How, J.~P.
\newblock Robust adaptive control barrier functions: An adaptive and data-driven approach to safety.
\newblock \emph{IEEE Control Systems Letters}, 5\penalty0 (3):\penalty0 1031--1036, 2020.

\bibitem[Lorenzen et~al.(2019)Lorenzen, Cannon, and Allg{\"o}wer]{lorenzen2019robust}
Lorenzen, M., Cannon, M., and Allg{\"o}wer, F.
\newblock Robust mpc with recursive model update.
\newblock \emph{Automatica}, 103:\penalty0 461--471, 2019.

\bibitem[Lu \& Cannon(2023)Lu and Cannon]{lu2023robust}
Lu, X. and Cannon, M.
\newblock Robust adaptive model predictive control with persistent excitation conditions.
\newblock \emph{Automatica}, 152:\penalty0 110959, 2023.

\bibitem[Lu et~al.(2019)Lu, Cannon, and Koksal-Rivet]{lu2019robust}
Lu, X., Cannon, M., and Koksal-Rivet, D.
\newblock Robust adaptive model predictive control: Performance and parameter estimation.
\newblock \emph{International Journal of Robust and Nonlinear Control}, 2019.

\bibitem[Mania et~al.(2019)Mania, Tu, and Recht]{mania2019certainty}
Mania, H., Tu, S., and Recht, B.
\newblock Certainty equivalence is efficient for linear quadratic control.
\newblock In \emph{Advances in Neural Information Processing Systems}, volume~32, pp.\  10154--10164. Curran Associates, Inc., 2019.

\bibitem[Mayne et~al.(2005)Mayne, Seron, and Rakovic]{mayne2005robust}
Mayne, D.~Q., Seron, M.~M., and Rakovic, S.
\newblock Robust model predictive control of constrained linear systems with bounded disturbances.
\newblock \emph{Automatica}, 41\penalty0 (2):\penalty0 219--224, 2005.

\bibitem[Mete et~al.(2022)Mete, Singh, and Kumar]{mete2022augmented}
Mete, A., Singh, R., and Kumar, P.
\newblock Augmented rbmle-ucb approach for adaptive control of linear quadratic systems.
\newblock \emph{Advances in Neural Information Processing Systems}, 35:\penalty0 9302--9314, 2022.

\bibitem[Milanese \& Vicino(1991)Milanese and Vicino]{milanese1991optimal}
Milanese, M. and Vicino, A.
\newblock Optimal estimation theory for dynamic systems with set membership uncertainty: An overview.
\newblock \emph{Automatica}, 27\penalty0 (6):\penalty0 997--1009, 1991.

\bibitem[Milanese et~al.(2013)Milanese, Norton, Piet-Lahanier, and Walter]{milanese2013bounding}
Milanese, M., Norton, J., Piet-Lahanier, H., and Walter, {\'E}.
\newblock \emph{Bounding approaches to system identification}.
\newblock Springer Science \& Business Media, 2013.

\bibitem[Narendra \& Annaswamy(1986)Narendra and Annaswamy]{narendra1986robust}
Narendra, K. and Annaswamy, A.
\newblock Robust adaptive control in the presence of bounded disturbances.
\newblock \emph{IEEE Transactions on Automatic Control}, 31\penalty0 (4):\penalty0 306--315, 1986.

\bibitem[Narendra \& Annaswamy(1987)Narendra and Annaswamy]{narendra1987persistent}
Narendra, K.~S. and Annaswamy, A.~M.
\newblock Persistent excitation in adaptive systems.
\newblock \emph{International Journal of Control}, 45\penalty0 (1):\penalty0 127--160, 1987.

\bibitem[Oymak \& Ozay(2019)Oymak and Ozay]{oymak2019non}
Oymak, S. and Ozay, N.
\newblock Non-asymptotic identification of lti systems from a single trajectory.
\newblock In \emph{2019 American control conference (ACC)}, pp.\  5655--5661. IEEE, 2019.

\bibitem[Parsi et~al.(2020{\natexlab{a}})Parsi, Iannelli, and Smith]{parsi2020active}
Parsi, A., Iannelli, A., and Smith, R.~S.
\newblock Active exploration in adaptive model predictive control.
\newblock In \emph{2020 59th IEEE Conference on Decision and Control (CDC)}, pp.\  6186--6191. IEEE, 2020{\natexlab{a}}.

\bibitem[Parsi et~al.(2020{\natexlab{b}})Parsi, Iannelli, Yin, Khosravi, and Smith]{parsi2020robust}
Parsi, A., Iannelli, A., Yin, M., Khosravi, M., and Smith, R.~S.
\newblock Robust adaptive model predictive control with worst-case cost.
\newblock \emph{IFAC-PapersOnLine}, 53\penalty0 (2):\penalty0 4222--4227, 2020{\natexlab{b}}.

\bibitem[Petrik \& Russel(2019)Petrik and Russel]{petrik2019beyond}
Petrik, M. and Russel, R.~H.
\newblock Beyond confidence regions: Tight bayesian ambiguity sets for robust mdps.
\newblock \emph{Advances in neural information processing systems}, 32, 2019.

\bibitem[Qi et~al.(2012)Qi, He, Mei, and Liu]{qi2012power}
Qi, J., He, G., Mei, S., and Liu, F.
\newblock Power system set membership state estimation.
\newblock In \emph{2012 IEEE Power and Energy Society General Meeting}, pp.\  1--7. IEEE, 2012.

\bibitem[Rantzer(2018)]{rantzer2018concentration}
Rantzer, A.
\newblock Concentration bounds for single parameter adaptive control.
\newblock In \emph{2018 Annual American Control Conference (ACC)}, pp.\  1862--1866. IEEE, 2018.

\bibitem[Rawlings \& Mayne(2009)Rawlings and Mayne]{rawlings2009model}
Rawlings, J.~B. and Mayne, D.~Q.
\newblock \emph{Model predictive control: Theory and design}.
\newblock Nob Hill Pub., 2009.

\bibitem[Rawlings et~al.(2017)Rawlings, Mayne, and Diehl]{rawlings2017model}
Rawlings, J.~B., Mayne, D.~Q., and Diehl, M.
\newblock \emph{Model predictive control: theory, computation, and design}, volume~2.
\newblock Nob Hill Publishing Madison, WI, 2017.

\bibitem[Rogers(1963)]{rogers1963covering}
Rogers, C.
\newblock Covering a sphere with spheres.
\newblock \emph{Mathematika}, 10\penalty0 (2):\penalty0 157--164, 1963.

\bibitem[Sarkar \& Rakhlin(2019)Sarkar and Rakhlin]{sarkar2019near}
Sarkar, T. and Rakhlin, A.
\newblock Near optimal finite time identification of arbitrary linear dynamical systems.
\newblock In \emph{International Conference on Machine Learning}, pp.\  5610--5618. PMLR, 2019.

\bibitem[Sarker et~al.(2023)Sarker, Fisher, Gaudio, and Annaswamy]{sarker2023accurate}
Sarker, A., Fisher, P., Gaudio, J.~E., and Annaswamy, A.~M.
\newblock Accurate parameter estimation for safety-critical systems with unmodeled dynamics.
\newblock \emph{Artificial Intelligence}, pp.\  103857, 2023.

\bibitem[Sasfi et~al.(2022)Sasfi, Zeilinger, and K{\"o}hler]{sasfi2022robust}
Sasfi, A., Zeilinger, M.~N., and K{\"o}hler, J.
\newblock Robust adaptive mpc using control contraction metrics.
\newblock \emph{arXiv preprint arXiv:2209.11713}, 2022.

\bibitem[Sattar et~al.(2022)Sattar, Oymak, and Ozay]{sattar2022finite}
Sattar, Y., Oymak, S., and Ozay, N.
\newblock Finite sample identification of bilinear dynamical systems.
\newblock In \emph{2022 IEEE 61st Conference on Decision and Control (CDC)}, pp.\  6705--6711. IEEE, 2022.

\bibitem[Simchowitz \& Foster(2020)Simchowitz and Foster]{simchowitz2020naive}
Simchowitz, M. and Foster, D.
\newblock Naive exploration is optimal for online lqr.
\newblock In \emph{International Conference on Machine Learning}, pp.\  8937--8948. PMLR, 2020.

\bibitem[Simchowitz et~al.(2018)Simchowitz, Mania, Tu, Jordan, and Recht]{simchowitz2018learning}
Simchowitz, M., Mania, H., Tu, S., Jordan, M.~I., and Recht, B.
\newblock Learning without mixing: Towards a sharp analysis of linear system identification.
\newblock In \emph{Conference On Learning Theory}, pp.\  439--473. PMLR, 2018.

\bibitem[Tanaskovic et~al.(2013)Tanaskovic, Fagiano, Smith, Goulart, and Morari]{tanaskovic2013adaptive}
Tanaskovic, M., Fagiano, L., Smith, R., Goulart, P., and Morari, M.
\newblock Adaptive model predictive control for constrained linear systems.
\newblock In \emph{2013 European Control Conference (ECC)}, pp.\  382--387. IEEE, 2013.

\bibitem[Tang et~al.(2023)Tang, Lasserre, and Yang]{tang2023uncertainty}
Tang, Y., Lasserre, J.-B., and Yang, H.
\newblock Uncertainty quantification of set-membership estimation in control and perception: Revisiting the minimum enclosing ellipsoid.
\newblock \emph{arXiv preprint arXiv:2311.15962}, 2023.

\bibitem[Tsiamis et~al.(2022)Tsiamis, Ziemann, Matni, and Pappas]{tsiamis2022statistical}
Tsiamis, A., Ziemann, I., Matni, N., and Pappas, G.~J.
\newblock Statistical learning theory for control: A finite sample perspective.
\newblock \emph{arXiv preprint arXiv:2209.05423}, 2022.

\bibitem[Tu(2019)]{tu2019sample}
Tu, S.~L.
\newblock \emph{Sample complexity bounds for the linear quadratic regulator}.
\newblock University of California, Berkeley, 2019.

\bibitem[Van~Overschee \& De~Moor(2012)Van~Overschee and De~Moor]{van2012subspace}
Van~Overschee, P. and De~Moor, B.
\newblock \emph{Subspace identification for linear systems: Theory—Implementation—Applications}.
\newblock Springer Science \& Business Media, 2012.

\bibitem[van Waarde et~al.(2023)van Waarde, Eising, Camlibel, and Trentelman]{van2023informativity}
van Waarde, H.~J., Eising, J., Camlibel, M.~K., and Trentelman, H.~L.
\newblock The informativity approach to data-driven analysis and control.
\newblock \emph{arXiv preprint arXiv:2302.10488}, 2023.

\bibitem[Verger-Gaugry(2005)]{verger2005covering}
Verger-Gaugry, J.-L.
\newblock Covering a ball with smaller equal balls in {$\mathbb{R}^n$}.
\newblock \emph{Discrete and Computational Geometry}, 33:\penalty0 143--155, 2005.

\bibitem[Vidyasagar \& Karandikar(2006)Vidyasagar and Karandikar]{vidyasagar2006learning}
Vidyasagar, M. and Karandikar, R.~L.
\newblock A learning theory approach to system identification and stochastic adaptive control.
\newblock \emph{Probabilistic and randomized methods for design under uncertainty}, pp.\  265--302, 2006.

\bibitem[Wabersich et~al.(2023)Wabersich, Taylor, Choi, Sreenath, Tomlin, Ames, and Zeilinger]{wabersich2023data}
Wabersich, K.~P., Taylor, A.~J., Choi, J.~J., Sreenath, K., Tomlin, C.~J., Ames, A.~D., and Zeilinger, M.~N.
\newblock Data-driven safety filters: Hamilton-jacobi reachability, control barrier functions, and predictive methods for uncertain systems.
\newblock \emph{IEEE Control Systems Magazine}, 43\penalty0 (5):\penalty0 137--177, 2023.

\bibitem[Wagenmaker \& Jamieson(2020)Wagenmaker and Jamieson]{wagenmaker2020active}
Wagenmaker, A. and Jamieson, K.
\newblock Active learning for identification of linear dynamical systems.
\newblock In \emph{Conference on Learning Theory}, pp.\  3487--3582. PMLR, 2020.

\bibitem[Wu et~al.(2013)Wu, Teo, and Wu]{wu2013min}
Wu, C., Teo, K.~L., and Wu, S.
\newblock Min--max optimal control of linear systems with uncertainty and terminal state constraints.
\newblock \emph{Automatica}, 49\penalty0 (6):\penalty0 1809--1815, 2013.

\bibitem[Xu \& Li(2024)Xu and Li]{xu2024convergence}
Xu, H. and Li, Y.
\newblock On the convergence rates of set membership estimation of linear systems with disturbances bounded by general convex sets.
\newblock \emph{arXiv preprint arXiv:2406.00574}, 2024.

\bibitem[Yeh et~al.(2022)Yeh, Yu, Shi, and Wierman]{yeh2022robust}
Yeh, C., Yu, J., Shi, Y., and Wierman, A.
\newblock Robust online voltage control with an unknown grid topology.
\newblock In \emph{Proceedings of the Thirteenth ACM International Conference on Future Energy Systems}, pp.\  240--250, 2022.

\bibitem[Yu et~al.(2023{\natexlab{a}})Yu, Gupta, and Wierman]{yu2023ltv}
Yu, J., Gupta, V., and Wierman, A.
\newblock Online stabilization of unknown linear time-varying systems.
\newblock \emph{arXiv preprint arXiv:2304.02878}, 2023{\natexlab{a}}.

\bibitem[Yu et~al.(2023{\natexlab{b}})Yu, Ho, and Wierman]{yu2023online}
Yu, J., Ho, D., and Wierman, A.
\newblock Online adversarial stabilization of unknown networked systems.
\newblock \emph{Proceedings of the ACM on Measurement and Analysis of Computing Systems}, 7\penalty0 (1):\penalty0 1--43, 2023{\natexlab{b}}.

\bibitem[Zhang et~al.(2021{\natexlab{a}})Zhang, Sun, and Shi]{zhang2021trajectory}
Zhang, K., Sun, Q., and Shi, Y.
\newblock Trajectory tracking control of autonomous ground vehicles using adaptive learning mpc.
\newblock \emph{IEEE Transactions on Neural Networks and Learning Systems}, 32\penalty0 (12):\penalty0 5554--5564, 2021{\natexlab{a}}.

\bibitem[Zhang et~al.(2021{\natexlab{b}})Zhang, Li, and Li]{zhang2021regret}
Zhang, R., Li, Y., and Li, N.
\newblock On the regret analysis of online lqr control with predictions.
\newblock In \emph{2021 American Control Conference (ACC)}, pp.\  697--703. IEEE, 2021{\natexlab{b}}.

\bibitem[Zhang et~al.(2016)Zhang, Shi, Li, Yan, Malkawi, and Li]{zhang2016decentralized}
Zhang, X., Shi, W., Li, X., Yan, B., Malkawi, A., and Li, N.
\newblock Decentralized temperature control via {HVAC} systems in energy efficient buildings: An approximate solution procedure.
\newblock In \emph{Proceedings of 2016 IEEE Global Conference on Signal and Information Processing}, pp.\  936--940, 2016.

\bibitem[Zhao \& Li(2022)Zhao and Li]{zhao2022adaptive}
Zhao, Z. and Li, Q.
\newblock Adaptive sampling methods for learning dynamical systems.
\newblock In \emph{Mathematical and Scientific Machine Learning}, pp.\  335--350. PMLR, 2022.

\bibitem[Zheng \& Li(2020)Zheng and Li]{zheng2020non}
Zheng, Y. and Li, N.
\newblock Non-asymptotic identification of linear dynamical systems using multiple trajectories.
\newblock \emph{IEEE Control Systems Letters}, 5\penalty0 (5):\penalty0 1693--1698, 2020.

\end{thebibliography}

\bibliographystyle{icml2024}


\newpage
\appendix
\onecolumn



\section*{Roadmap for the appendices}
\begin{itemize}
    \item \Cref{subsec:notation} introduces additional notation used throughout the Appendix.
    \item \Cref{subsec:related_work} provides more  literature review on LSE and SM,  and  a more detailed discussion on the technical contributions of this paper.
    \item \Cref{appendix:assumption} provides more discussions on  examples that satisfy Assumptions \ref{ass: BMSB, bounded xt} and \ref{ass: tight bound on wt}.
    \item \Cref{appendix:proof_main} presents the  proof of \Cref{thm: estimation err bdd}. In particular, we provide  helper lemmas in \Cref{sec:D1} and prove \Cref{lem: bound PE c}, \Cref{lem: bound PE and gamma>delta/2} in \Cref{sec:D2} and \Cref{sec:D3} respectively. A more precise upper bound for Theorem \ref{thm: estimation err bdd} (without the $\tilde O(\cdot)$ notation) is provided in Appendix \ref{append: precise upper bdd}.
    \item \Cref{sec:cor1} presents a  proof of \Cref{cor: uniform dist est. bdd}
     \item \Cref{sec:cor2} provides a proof of \Cref{cor: estimation err bdd for B=0}.
    \item \Cref{sec:thrm2} presents a  proof of \Cref{thm: loose bound}.
   \item Appendix \ref{append: unknown wmax} provides proofs of Theorem 3, Corollary 3, and Theorem 4.

    \item \Cref{append: simulation} provides details of the simulation.

\end{itemize}


\section{Additional notations}
\label{subsec:notation}
	Let $\Sb_n(0,1)$ denote the unit sphere in $\R^n$ in $l_2$ norm, i.e., $\Sb_n(0,1)=\{x\in \R^n: \|x\|_2=1\}$. Let $\Sb_{n\times m}(0,1)$ denote the unit sphere in $\R^{n\times m}$ with respect to the Frobenius norm, i.e., $\Sb_{n\times m}(0,1)=\{M\in \R^{n\times m}: \|M\|_F=1\}$. Let $\bar B_n(0,1)$ denote the closed unit ball  in $\R^n$ in $l_2$ norm, i.e., $\bar B_n(0,1)=\{x\in \R^n: \|x\|_2\leq1\}$. Let  $\bar B_{n\times m}(0,1)$ denote the closed unit ball in $\R^{n\times m}$ in Frobenius norm, i.e., $\bar B_{n\times m}(0,1)=\{M\in \R^{n\times m}: \|M\|_F\leq 1\}$. For a matrix $M\in \R^{n\times m}$, $\text{vec}(M)$ is the vectorization of $M$. Moreover, we define the inverse mapping of $\text{vec}(\cdot)$ as $\text{mat}(\cdot)$, i.e.,  for a vector $d\in \R^{nm}$, $\text{mat}(d)\in \R^{n\times m}$. Consider a $\sigma$-algebra $\F$ and a random variable $X$, we write $X\in \F$ if $X $ is measurable with respect to $\F$, i.e., for all Borel measurable sets $B\subseteq \R$, we have $X^{-1}(B)\in \F$. We can similarly define $\F$-measurable random matrices and random vectors. Further, consider a  polyhedral $\D=\{x: Ax\leq b\}$, we write $\D\in \F$ if matrix $A$ and vector $b$ are measurable with respect to $\F$. Consider two symmetric matrices $A,B \in \R^{n\times n}$, we write $A\succeq B$ if $A-B$ is a positive definite matrix. We define $\min \emptyset =+\infty$. For a set $\Ec$, let $\one_{\Ec}$ denote the indicator function on $\Ec.$ For a vector $x\in\mathbb{R}^n$, we use $x^j$ to denote the $j$th coordinate of $x$. Throughout the paper, we use
$\text{TrunGauss}(0,\sigma_w, [-w_{\max}, w_{\max}])$ to refer to the truncated Gaussian distribution generated by Gaussian distribution with zero mean and $\sigma_w^2$ variance with truncated range $[-w_{\max}, w_{\max}]$. The same applies to multi-variate truncated Gaussian distributions.


\section{More discussions on  least square and set membership}
\label{subsec:related_work}
System identification studies the problem of estimating the parameters of an unknown dynamical systems from trajectory data. There are two main classes of estimation methods: point estimator such as least square estimation (LSE), and set estimator such as set membership estimation (SME). In the following, we provide more discussions and literature review on  LSE and SME. We will also discuss the major technical novelties of this work.

\subsection{Least square estimation}
For linear dynamical systems $x_{t+1}=A^*x_t +B^*u_t+w_t=\theta^* z_t+w_t$, given a trajectory of data $\{x_t, u_t\}_{t\geq 0}$, least square estimation generates a point estimator that minimizes the following quadratic error \cite{van2012subspace, ljung1998system}:
\begin{align*}
    \hat \theta_{\text{LSE}}=\min_{\hat \theta}\sum_{t=0}^{T-1}\|x_{t+1}-\hat \theta z_t\|_2^2.
\end{align*}

Least-square estimation is widely used and its convergence (rate) guarantees have been investigated for a long time. 
In particular, non-asymptotic  convergence rate guarantees of LSE has become increasingly important as these guarantees are the foundations for non-asymptotic performance analysis of learning-based/adaptive control algorithms. Earlier non-asymptotic analysis of LSE  focused on the simpler regression model $y_t=\theta^* x_t +w_t$, where $x_t$ and $y_t$ are independent \cite{campi2002finite, vidyasagar2006learning, hsu2012random}.  

Recently, there is one major breakthrough  in \cite{simchowitz2018learning} that provides LSE's convergence rate analysis for linear dynamical system $x_{t+1}=\theta^* z_t +w_t$, where $x_{t+1}$ and $z_t=[x_t^\top, u_t^\top]^{\top}$ are correlated. More specifically, \cite{simchowitz2018learning} establishes a fundamental property, \textit{block-martingale small-ball (BMSB)}, to analyze LSE under correlated data. BMSB enables a long list of subsequent literature on LSE's non-asymptotic analysis for different types of dynamical systems, e.g., \cite{oymak2019non,dean2019sample,zheng2020non,rantzer2018concentration,faradonbeh2018finite, wagenmaker2020active, tsiamis2022statistical, zhao2022adaptive,li2021safe}.

Though LSE is a point estimator, one can establish confidence region of LSE based on proper statistical assumptions on $w_t$. The pioneer works on the confidence region of LSE for linear dynamical systems are \cite{abbasi2011online,abbasi2011regret}, which construct ellipsoid confidence regions for LSE. Moreover, the non-asymptotic bounds on estimation errors established in \cite{simchowitz2018learning,dean2019safely} can also be viewed as confidence bounds. Further, the estimation error $\tilde O(\frac{\sqrt{n_x+n_z}}{\sqrt T})$ has been shown to match the fundamental lower bound for any estimation methods for unbounded disturbances in \cite{simchowitz2018learning}. However, these confidence bounds all rely on statistical inequalities, which may result in loose constant factors despite an optimal convergence rate. When applying these confidence bounds to robust control, where the controller is required to satisfy certain stability and constraint satisfaction properties for every possible system in the confidence region, a loose constant factor will result in a larger confidence region and a more conservative control design. Finally, in robust control and many practical applications, the disturbances are usually bounded, and it will be interesting to see how the knowledge of the boundedness will improve the uncertainty set estimation.

On a side note, this paper is also related with the ambiguity set estimation for the transition probabilities in robust Markov decision processes \cite{petrik2019beyond}. There are  attempts on improving the ambiguity set estimation based on LSE for less conservative robust MDP \cite{petrik2019beyond}.

\subsection{Set membership}
\label{subsec:sm}

Set membership is commonly used in robust control for uncertainty set estimation \cite{milanese1991optimal,adetola2011robust, tanaskovic2013adaptive,bujarbaruah2020adaptive,zhang2021trajectory, parsi2020robust, parsi2020active, sasfi2022robust}. 
There is a long history of research on SME for both deterministic disturbances, such as \cite{bai1995membership,fogel1982value, kitamura2003convergence,milanese2013bounding, lauricella2020set,livstone1996asymptotic}, and 
 stochastic disturbances, such as \cite{bai1998convergence, bai1995membership, kitamura2005size,akccay2004size,lu2019robust}. For the stochastic disturbances, both convergence and convergence rate analysis have been investigated under the persistent excitation (PE) condition. However, the existing convergence rates are only established for simpler regression problems, $y_t=\theta^*x_t +w_t$, where $y_t$ and $x_t$ are independent \cite{akccay2004size,bai1995membership,bai1998convergence,kitamura2005size}. 

 Recently, \cite{lu2019robust} provided an initial attempt to establish the convergence guarantee of SME for linear dynamical systems $x_{t+1}=\theta^* z_t+w_t$ for correlated data $x_{t+1}$ and $z_t$. However, \cite{lu2019robust} assumes that PE holds deterministically, and designs a special control design based on constrained optimization to satisfy PE deterministically. Therefore, the convergence for general control design and the convergence rate analysis remain open questions for correlated data arising from dynamical systems.

 In this paper, we establish the convergence rate guarantees of SME on linear dynamical systems under the BMSB conditions in \cite{simchowitz2018learning}. Compared with \cite{lu2019robust}, BMSB condition can be satisfied by adding an i.i.d. random noise to a general class of control designs \cite{li2021safe}. 

\nbf{Technically}, one major challenge of SME analysis compared with the LSE analysis  is that the diameter of the membership set does not have an explicit formula, which is in stark contrast with LSE, where the point estimator is the solution to a quadratic program and has explicit form. A common trick to address this issue in the analysis of SME is to connect the diameter bound with the values of disturbances subsequences $\{w_{s_k}\}_{k\geq 0}$: it can be generally shown that a large diameter indicates that a long subsequence of disturbances are far away from the boundary of $\W$. However, existing construction methods of $\{w_{s_k}\}_{k\geq 0}$ will cause the time indices $\{s_k\}_{k\geq 0}$ to \textit{correlate} with the realization of the sequences $\{x_t, u_t, w_t\}_{t\geq 0}$ \cite{akccay2004size,lu2019robust,bai1995membership}.\footnote{In \cite{lu2019robust}, the correlation between $\{s_k\}_{k\geq 0}$ and $\{x_t, u_t,w_t\}_{t\geq 0}$ is via the PE condition, but \cite{lu2019robust} assume  deterministic PE to avoid this correlation issue.} Consequently, in the correlated-data scenario and when PE does not hold deterministically, under the existing construction methods in \cite{akccay2004size,lu2019robust,bai1995membership}, the probability of $\{w_{s_k}\}_{k\geq 0}$ with correlated time indices \textit{cannot} be  bounded by the probability of the \textit{independent} sequence  $\{w_t\}_{t\geq 0}$. One major \nbf{technical contribution} of this paper is to provide a novel construction of  $\{w_{s_k}\}_{k\geq 0}$ based on a sequence of stopping times and establish conditional independence properties despite correlated data and stochastic PE condition (BMSB). More details can be found in Lemma \ref{lem: construct Gik and Lik} and the proof or Lemma \ref{lem: bound PE and gamma>delta/2}.

Though we only consider box constraints for $w_t$, it is worth mentioning that SME can be applied to much more general forms of disturbances. For example, a common alternative is the ellipsoidal-bounded disturbance where $\mathcal{W}:=\{ w\in \mathcal{R}^{n_x}: w^\top P w \leq 1 \}$ with positive definite $P \in \mathcal{R}^{n_x \times n_x}$ \cite{bai1995membership,van2023informativity, eising2023sampling,liu2016ellipsoidal} and polytopic-bounded disturbance $\mathcal{W}:=\{ w\in \mathcal{R}^{n_x}: Gw \leq h \}$ for positive definite $G\in\mathcal{R}^{n_x \times n_x}$ and $h\in\mathcal{R}^{n_x}$ \cite{fogel1982value,lu2019robust,lu2023robust}. There are also SME literature assuming bounded energy of the disturbance sequences \cite{bai1995membership}. It is an interesting future direction to extend the analysis in this paper to more general disturbance constraints.



Further, exact SME involves the intersection of an increasing number of sets, thus causing the computation complexity increases with time $t$, which can become prohibitive when $t$ is large. There are many methods trying to reduce the computation complexity by approximating the membership sets (see e.g., \cite{livstone1996asymptotic,lu2019robust}, etc.). It is an exciting future direction to study the diameter bounds of the approximated SME methods.

Lastly, it is worthing mentioning that SME can also be applied to the uncertainty set estimation in perception-based control \cite{gao2024closure,tang2023uncertainty}.






\section{More discussions on Assumptions \ref{ass: BMSB, bounded xt} and \ref{ass: tight bound on wt}}
\label{appendix:assumption}

\subsection{More discussions on Assumption \ref{ass: BMSB, bounded xt}}
The BMSB condition has been widely used in learning-based control. It has been shown that BMSB can be satisfied in many scenarios. For example, \cite{simchowitz2018learning,tu2019sample} showed that linear systems with i.i.d. perturbed linear control policies, i.e., $x_{t+1}=Ax_t+B( Kx_t+\eta_t)+w_t$,\footnote{Though we only describe a static linear policy $u_t=Kx_t$ here, the results in \cite{simchowitz2018learning,tu2019sample,dean2019safely} hold for dynamic linear policies.} satisfy BMSB if the disturbances $w_t$ and $\eta_t$  are i.i.d. and follow Gaussian distributions with positive definite covariance matrices. Later, \cite{dean2019safely} showed that $x_{t+1}=Ax_t+B( Kx_t+\eta_t)+w_t$ can still satisfy BMSB even for non-Gaussian distributions of $w_t,\eta_t$, as long as $w_t$ and $\eta_t$ have independent coordinates and finite fourth moments. Recently, \cite{li2021safe} extended the results to linear systems with nonlinear policies, i.e., $x_{t+1}=Ax_t+B(\pi_t(x_t)+\eta_t)+w_t$, and showed that BMSB still holds as long as the nonlinear policies $\pi_t$ generate bounded trajectories of states and control inputs, and $w_t, \eta_t$ are bounded and follow distributions with certain anti-concentrated properties (a special case is positive definite covariance matrix).


\subsection{ More discussions on Assumption \ref{ass: tight bound on wt}}\label{append: qw example}
In this subsection, we provide two example distributions, truncated Gaussian and uniform distributions, and discuss their corresponding $q_w(\epsilon)$ functions. It will be shown that for both distributions below, $q_w(\epsilon)=O(\epsilon)$.

	\begin{lemma}[Example of uniform distribution]\label{lem: uniform dist on Wcal qw(epsilon)}
		Consider $w_t$ that follows a uniform distribution on $[-w_{\max}, w_{\max}]^{n_x}$. Then, $q_w(\epsilon)=\frac{\epsilon}{2w_{\max}}$.
	\end{lemma}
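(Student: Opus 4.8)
\textbf{Proof proposal for Lemma \ref{lem: uniform dist on Wcal qw(epsilon)}.}

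The plan is to directly compute the two probabilities appearing in the definition of $q_w(\epsilon)$ (Assumption \ref{ass: tight bound on wt}) for a single coordinate $w_t^j$. Since $w_t$ is uniform on the box $[-w_{\max}, w_{\max}]^{n_x}$, its coordinates are independent and each $w_t^j$ is uniform on the interval $[-w_{\max}, w_{\max}]$, which has length $2w_{\max}$. First I would observe that, for $0 < \epsilon \le 2w_{\max}$, the event $\{w_t^j \le \epsilon - w_{\max}\}$ corresponds to $w_t^j$ lying in the interval $[-w_{\max}, \epsilon - w_{\max}]$ of length $\epsilon$, so $\Pb(w_t^j \le \epsilon - w_{\max}) = \epsilon/(2w_{\max})$. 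Symmetrically, $\{w_t^j \ge w_{\max} - \epsilon\}$ corresponds to the interval $[w_{\max} - \epsilon, w_{\max}]$, again of length $\epsilon$, giving $\Pb(w_t^j \ge w_{\max} - \epsilon) = \epsilon/(2w_{\max})$.

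Taking the minimum of these two equal quantities yields exactly $\epsilon/(2w_{\max})$, so we may set $q_w(\epsilon) = \epsilon/(2w_{\max})$. I would then briefly check the normalization conditions stated in Assumption \ref{ass: tight bound on wt}: the function $\epsilon \mapsto \epsilon/(2w_{\max})$ is non-decreasing in $\epsilon$, and at $\epsilon = 2w_{\max}$ it equals $1$, consistent with the requirement $q_w(2w_{\max}) = 1$. (For $\epsilon > 2w_{\max}$ the probabilities are already $1$, so one can cap $q_w$ at $1$ without affecting anything.) This establishes the claim.

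There is essentially no obstacle here; the only thing to be slightly careful about is the range of validity of $\epsilon$ (the clean linear formula holds for $\epsilon \in (0, 2w_{\max}]$), and confirming that the definition of $q_w$ asks for a lower bound on the minimum of the two tail probabilities rather than an exact value — since here both tails are exactly $\epsilon/(2w_{\max})$, the bound is tight and the stated formula is an equality.
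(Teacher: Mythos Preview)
Your proposal is correct and takes essentially the same approach as the paper: both compute the one-dimensional tail probabilities directly for a uniform marginal on $[-w_{\max}, w_{\max}]$ and use symmetry to handle both directions. The paper writes out the integral of the uniform density explicitly, while you use the equivalent interval-length argument; the additional normalization checks you include are a nice touch but not required.
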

	\begin{proof}
		Since $\textup{Unif}(\W)$ is symmetric, we only need to consider one direction $j=1$. 
		\begin{align*}
			\Pb(w^j+w_{\max}\leq \epsilon)&=\int_{w^1+w_{\max}\leq \epsilon} \int_{w^2, \dots, w^{n_x}\in [-w_{\max}, w_{\max}]} \frac{1}{(2w_{\max})^{n_x}} \one_{(w\in \W)}\,\mathrm d w\\
			&=		\int_{w^1\leq \epsilon-w_{\max}}\frac{1}{2w_{\max}} \one_{(w\in \W)}\mathrm d w^1= \frac{\epsilon}{2w_{\max}}
		\end{align*}
		Similarly,
$				\Pb(w_{\max}-w^1\leq \epsilon) = 	\int_{w^1\geq w_{\max}-\epsilon}\frac{1}{2w_{\max}} \one_{(w\in \W)}d w^1= \frac{\epsilon}{2w_{\max}}
$.
	\end{proof}

 
	\begin{lemma}[Example of truncated Gaussian distribution]\label{lem: truncated Gaussian dist on Wcal qw(epsilon)}
		Consider $w_t$ follows a truncated Gaussian distribution on $[-w_{\max}, w_{\max}]^{n_x}$ generated by a Gaussian distribution with zero mean and $\sigma_w I_{n_x}$ covariance matrix. Then, $q_w(\epsilon)=\frac{1}{\min(\sqrt{2\pi}\sigma_w, 2w_{\max})} \exp(\frac{-w_{\max}^2}{2\sigma_w^2})\epsilon$. 
	\end{lemma}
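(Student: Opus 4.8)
\textbf{Proof plan for Lemma \ref{lem: truncated Gaussian dist on Wcal qw(epsilon)}.} The plan is to mirror the computation for the uniform case, exploiting that the multivariate truncated Gaussian on $[-w_{\max},w_{\max}]^{n_x}$ with diagonal covariance $\sigma_w^2 I_{n_x}$ factorizes coordinatewise into independent one-dimensional truncated Gaussians. So by symmetry and independence across coordinates it suffices to lower-bound $\Pb(w^j \le \epsilon - w_{\max})$ and $\Pb(w^j \ge w_{\max} - \epsilon)$ for a single scalar truncated Gaussian $W$ with density $f(w) = \frac{1}{Z}\,\phi_{\sigma_w}(w)\,\one_{\{|w|\le w_{\max}\}}$, where $\phi_{\sigma_w}(w) = \frac{1}{\sqrt{2\pi}\sigma_w}\exp(-w^2/(2\sigma_w^2))$ and $Z = \Pb(|\mathcal N(0,\sigma_w^2)| \le w_{\max}) \le 1$ is the normalizing constant.

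First I would write, for $0 < \epsilon \le 2w_{\max}$,
\begin{align*}
\Pb(W \ge w_{\max} - \epsilon) = \frac{1}{Z}\int_{w_{\max}-\epsilon}^{w_{\max}} \phi_{\sigma_w}(w)\,\mathrm d w.
\end{align*}
Since $Z \le 1$, this is at least $\int_{w_{\max}-\epsilon}^{w_{\max}} \phi_{\sigma_w}(w)\,\mathrm d w$. On the interval of integration the integrand is monotone (decreasing in $|w|$), so the integral is at least $\epsilon \cdot \min_{w \in [w_{\max}-\epsilon, w_{\max}]}\phi_{\sigma_w}(w) = \epsilon\,\phi_{\sigma_w}(w_{\max})$ when $w_{\max}-\epsilon \ge 0$, and one checks the case $w_{\max}-\epsilon < 0$ separately — there the interval includes the mode, so the bound only improves. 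This gives $\Pb(W \ge w_{\max}-\epsilon) \ge \frac{\epsilon}{\sqrt{2\pi}\sigma_w}\exp(-w_{\max}^2/(2\sigma_w^2))$. The identical estimate holds for $\Pb(W \le \epsilon - w_{\max})$ by symmetry of $\phi_{\sigma_w}$. I would then also record the trivial alternative bound coming from the uniform-type argument: since the truncated Gaussian density on $[w_{\max}-\epsilon, w_{\max}]$ is at least $\phi_{\sigma_w}(w_{\max})/Z \ge \phi_{\sigma_w}(w_{\max})$, but to match the stated constant $\frac{1}{\min(\sqrt{2\pi}\sigma_w, 2w_{\max})}\exp(-w_{\max}^2/(2\sigma_w^2))$ I need the slightly sharper observation that the probability mass in a width-$\epsilon$ strip is also at least $\frac{\epsilon}{2w_{\max}}$ times the total mass in $[-w_{\max},w_{\max}]$ weighted appropriately — more precisely, comparing to the worst case and using that the density integrates to $1$, one gets a bound with $2w_{\max}$ in the denominator as well, and taking the better (larger) of the two lower bounds yields the $\min$ in the statement.

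The main obstacle — really the only subtlety — is getting the constant exactly as stated rather than a loose version of it: I must be careful to (i) handle the case $\epsilon > w_{\max}$ where the strip $[w_{\max}-\epsilon, w_{\max}]$ straddles zero, so the minimum of the density over the strip is still $\phi_{\sigma_w}(w_{\max})$ (the endpoint furthest from the mode), and (ii) justify dropping the $1/Z$ factor (legitimate since $Z \le 1$) and, for the $2w_{\max}$ branch, argue that the fraction of mass in a width-$\epsilon$ sub-interval of a width-$2w_{\max}$ interval is at least $\epsilon/(2w_{\max})$ — this follows because the truncated Gaussian is log-concave, hence unimodal, so shifting any width-$\epsilon$ window toward the mode only increases its mass, and the worst case is a window at the edge, which still carries at least $\frac{\epsilon}{2w_{\max}}$ of the unit total by a standard quantile-type comparison. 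Combining both lower bounds and taking the maximum over them (equivalently, the $\min$ in the denominator) and noting both are linear in $\epsilon$ completes the proof that $q_w(\epsilon) = \frac{1}{\min(\sqrt{2\pi}\sigma_w,\,2w_{\max})}\exp(-w_{\max}^2/(2\sigma_w^2))\,\epsilon$ is a valid choice, which is indeed non-decreasing in $\epsilon$ as required by Assumption \ref{ass: tight bound on wt}.
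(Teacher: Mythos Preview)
Your reduction to a single scalar truncated Gaussian and the first branch of the bound (using $Z\le 1$ and $\int_{w_{\max}-\epsilon}^{w_{\max}}\phi_{\sigma_w}\ge \epsilon\,\phi_{\sigma_w}(w_{\max})$) are correct and coincide with the paper's argument. The case split on $\epsilon>w_{\max}$ is also fine.

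The gap is in your $2w_{\max}$ branch. The claim that ``the fraction of mass in a width-$\epsilon$ sub-interval of a width-$2w_{\max}$ interval is at least $\epsilon/(2w_{\max})$'' for a log-concave/unimodal density is false, and your unimodality observation actually points the wrong way: shifting a window toward the mode \emph{increases} its mass, so the edge window $[w_{\max}-\epsilon,w_{\max}]$ is the \emph{minimizer}, not something guaranteed to meet the average $\epsilon/(2w_{\max})$. A concrete counterexample is a truncated Gaussian with $\sigma_w\ll w_{\max}$: virtually all mass sits near $0$ and the edge strip carries mass of order $\frac{\epsilon}{\sqrt{2\pi}\sigma_w}e^{-w_{\max}^2/(2\sigma_w^2)}$, which is exponentially smaller than $\epsilon/(2w_{\max})$.

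The paper obtains the $2w_{\max}$ branch differently and directly: instead of lower-bounding the strip mass against the average, it upper-bounds the normalizer by the crude estimate
\[
Z=\int_{-w_{\max}}^{w_{\max}}\phi_{\sigma_w}(w)\,\mathrm d w \;\le\; 2w_{\max}\cdot\max_w\phi_{\sigma_w}(w)=\frac{2w_{\max}}{\sqrt{2\pi}\,\sigma_w},
\]
and then combines this with the same numerator bound $\int_{w_{\max}-\epsilon}^{w_{\max}}\phi_{\sigma_w}\ge \epsilon\,\phi_{\sigma_w}(w_{\max})$ to get
\[
\Pb(W\ge w_{\max}-\epsilon)\;\ge\;\frac{\epsilon\,\phi_{\sigma_w}(w_{\max})}{Z}\;\ge\;\frac{\epsilon}{2w_{\max}}\exp\!\Big(-\frac{w_{\max}^2}{2\sigma_w^2}\Big).
\]
Taking the maximum of this and the $Z\le 1$ branch yields the stated $\min(\sqrt{2\pi}\sigma_w,2w_{\max})$ in the denominator. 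Replace your quantile/log-concavity argument with this one-line bound on $Z$ and the proof goes through.
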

	\begin{proof}
		Since this distribution is symmetric and each coordinate is independent, we only need to consider one direction $j$. Let $X$ denote a Gaussian distribution with zero mean and $\sigma_w^2$ variance. By the definition of truncated Gaussian distributions, we have
		\begin{align*}
			\Pb(w^j+w_{\max}\leq \epsilon)&=\frac{\Pb(-w_{\max}\leq  X \leq -w_{\max}+\epsilon)}{\Pb(-w_{\max}\leq X \leq w_{\max})}
		\end{align*}
  Notice that $X/\sigma_w$ follows the standard Gaussian distribution, so we can obtain the following bounds.
  \begin{align*}
      \Pb(-w_{\max}\leq  X \leq -w_{\max}+\epsilon)& =\int_{-w_{\max}/\sigma_w}^{(-w_{\max}+\epsilon)/\sigma_w} \frac{1}{\sqrt{2\pi}}\exp(-\frac{z^2}{2})\,\mathrm d z\\
      & \geq \frac{1}{\sqrt{2\pi}}\exp(-w_{\max}^2/(2\sigma_w^2) )\frac{\epsilon}{\sigma_w}
  \end{align*}
and
\begin{align*}
      \Pb(-w_{\max}\leq  X \leq w_{\max})& =\int_{-w_{\max}/\sigma_w}^{w_{\max}/\sigma_w} \frac{1}{\sqrt{2\pi}}\exp(-\frac{z^2}{2})\,\mathrm d z\\
      & \leq \min(1, \frac{1}{\sqrt{2\pi}}\frac{2w_{\max}}{\sigma_w})
  \end{align*}
  Therefore, we obtain
  \begin{align*}
			\Pb(w^j+w_{\max}\leq \epsilon)&=\frac{\Pb(-w_{\max}\leq  X \leq -w_{\max}+\epsilon)}{\Pb(-w_{\max}\leq X \leq w_{\max})}\\
   &\geq \max(\frac{1}{\sqrt{2\pi}}\exp(-w_{\max}^2/\sigma_w^2) \frac{\epsilon}{\sigma_w}, \frac{\epsilon}{2w_{\max}} \exp(\frac{-w_{\max}^2}{2\sigma_w^2}))\\
   & = \frac{1}{\min(\sqrt{2\pi}\sigma_w, 2w_{\max})} \exp(\frac{-w_{\max}^2}{2\sigma_w^2})\epsilon
		\end{align*}
Finally, $				\Pb(w_{\max}-w^1\leq \epsilon)$ can be bounded similarly.
	\end{proof}

\begin{lemma}[Example of uniform distribution on the boundary of $\W$ (a generalization of Rademacher distribution)] Consider $w_t$ follows a uniform distribution on $\{w: \|w\|_\infty =w_{\max}\}$. Then $q_w(\epsilon)=\frac{1}{2n_x}$.
    
\end{lemma}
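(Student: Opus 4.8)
The plan is to first make precise what ``uniform distribution on $\partial\W := \{w:\|w\|_\infty = w_{\max}\}$'' means, and then reduce the one-coordinate tail probabilities appearing in Assumption~\ref{ass: tight bound on wt} to the probability of landing on a single facet of the cube. Up to a set of surface measure zero (the edges where two or more coordinates are simultaneously extremal), $\partial\W$ is the disjoint union of the $2n_x$ facets $F_{k}^{\pm} := \{w : w^k = \pm w_{\max},\ |w^i|\le w_{\max}\ \forall\, i\ne k\}$, each an $(n_x-1)$-dimensional cube of side $2w_{\max}$ and hence of equal surface area. Therefore a uniform draw $w_t$ from $\partial\W$ admits the generative description: choose a coordinate $k\in\{1,\dots,n_x\}$ and a sign $\sigma\in\{+,-\}$ uniformly at random (each of the $2n_x$ choices with probability $\tfrac1{2n_x}$), set $w_t^{k}=\sigma w_{\max}$, and draw the remaining $n_x-1$ coordinates i.i.d.\ uniform on $[-w_{\max},w_{\max}]$.

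With this description, the bound is immediate. Fix $j\in\{1,\dots,n_x\}$ and $\epsilon>0$. On the event that the selected facet is $F_j^{+}$ we have $w_t^{j}=w_{\max}$ deterministically, so conditionally $w_t^{j}\ge w_{\max}-\epsilon$ holds with probability $1$; discarding the (nonnegative) contributions of the other $2n_x-1$ facets gives
\begin{align*}
	\Pb\!\left(w_t^{j}\ge w_{\max}-\epsilon\right)\ \ge\ \Pb\!\left(\text{selected facet}=F_j^{+}\right)\ =\ \frac{1}{2n_x}.
\end{align*}
By the symmetric argument applied to $F_j^{-}$, also $\Pb(w_t^{j}\le \epsilon-w_{\max})\ge \tfrac1{2n_x}$. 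Taking the minimum, Assumption~\ref{ass: tight bound on wt} is satisfied with the (constant-in-$\epsilon$) choice $q_w(\epsilon)=\tfrac1{2n_x}$, which is trivially non-decreasing and, being a positive constant, satisfies $q_w(\epsilon)=\Omega(\epsilon)$ on the relevant range $\epsilon\in(0,2w_{\max}]$.

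The only place requiring a bit of care is the measure-theoretic bookkeeping in the first step --- verifying that $\partial\W$ splits into $2n_x$ facets of equal surface measure and that their pairwise intersections carry zero surface measure --- but this is entirely standard. It is worth noting that in fact $\Pb(w_t^{j}=w_{\max})=\tfrac1{2n_x}$ \emph{exactly}: the atom at $w_{\max}$ is contributed solely by the facet $F_j^{+}$, since on every other facet the coordinate $w_t^{j}$ has an absolutely continuous law. The true one-sided probabilities therefore exceed $\tfrac1{2n_x}$ by an $O(\epsilon)$ term coming from the remaining facets, but Assumption~\ref{ass: tight bound on wt} only requires a clean lower bound, so the stated value $q_w(\epsilon)=\tfrac1{2n_x}$ suffices.
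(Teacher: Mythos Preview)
Your proposal is correct and follows essentially the same approach as the paper: both arguments note that $\partial\W$ decomposes into $2n_x$ equal-measure facets, so each is selected with probability $\tfrac1{2n_x}$, and on the facet $F_j^{\pm}$ the $j$th coordinate is exactly at $\pm w_{\max}$, giving $\Pb(w^j=\pm w_{\max})=\tfrac1{2n_x}$ as a lower bound for the tail probability in Assumption~\ref{ass: tight bound on wt}. The paper's proof is a two-line sketch of this; your version simply fills in the measure-theoretic details and the generative description more explicitly.
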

\begin{proof}
    Since the hyper-cube $\{w: \|w\|_\infty =w_{\max}\}$ has $2n_x$ facets, the probability on each facet is $\frac{1}{2n_x}$. Therefore, $\Pb(w^j \leq \epsilon -w_{\max}) \geq \Pb(w^j=-w_{\max})=\frac{1}{2n_x}$ for all $j$. The same applies to $\Pb(w^j \geq -\epsilon +w_{\max})$.
    \end{proof}

\section{Proof of Theorem \ref{thm: estimation err bdd}}\label{appendix:proof_main}



	The section provides more details for the proof of Theorem \ref{thm: estimation err bdd}. In particular,  we first provide technical lemmas for set discretization, then prove Lemma \ref{lem: bound PE c} and Lemma \ref{lem: bound PE and gamma>delta/2} respectively. The proof of Theorem \ref{thm: estimation err bdd} follows naturally by combining Lemma \ref{lem: bound PE c} and Lemma \ref{lem: bound PE and gamma>delta/2}.







	\subsection{Technical lemmas: set discretization}
 \label{sec:D1}
 This subsection provide useful technical lemmas for the proofs of Lemma \ref{lem: bound PE c} and Lemma \ref{lem: bound PE and gamma>delta/2}. The results are based on a finite-ball covering result that is classical in the literature \cite{rogers1963covering} \cite{verger2005covering}. 
	

	\begin{theorem}[Theorem 1.1 and 1.2 in \cite{verger2005covering} and Theorem 2 in \cite{rogers1963covering} (revised to match the setting of this paper)]\label{thm: ball cover}
		Consider a closed ball $\bar \B_n(0,1)=\{x\in \R^n: \|x\|_2\leq 1\}$ in $l_2$ norm. Considering covering this ball $\bar \B_n(0,1)$ with smaller closed balls $\bar \B_n(z,\epsilon)$ for $z\in \R^n$. Let $v_{\epsilon, n}$ denote the minimal number of smaller balls needed to cover $\bar \B_n(0,1)$. For $n\geq 1$ and $0<\epsilon<1/2$, we have
		\begin{align*}
			 v_{\epsilon, n} & \leq 544 n^{2.5} \log(n/\epsilon) (\frac{1}{\epsilon})^n
		\end{align*}
	\end{theorem}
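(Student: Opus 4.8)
\textbf{Proof proposal for Theorem \ref{thm: ball cover} (the finite-ball covering bound).}

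The statement to be established is a quantitative covering number bound for the Euclidean unit ball, namely $v_{\epsilon,n} \le 544\, n^{2.5} \log(n/\epsilon)\,(1/\epsilon)^n$ for $n \ge 1$ and $0 < \epsilon < 1/2$. Since the paper explicitly attributes this to Theorem 2 of \cite{rogers1963covering} and Theorems 1.1--1.2 of \cite{verger2005covering}, the plan is not to reprove covering theory from scratch but to reduce the cited ``asymptotic-looking'' covering results to the clean closed-form bound stated here, tracking all constants. First I would recall the classical fact (Rogers) that the unit ball in $\R^n$ can be covered by balls of radius $\epsilon$ using a number of translates controlled by the \emph{covering density} $\vartheta(\R^n)$ of $\R^n$ by balls, which satisfies $\vartheta(\R^n) \le n \log n + n \log\log n + 5n$ for $n \ge 9$ (and a crude universal constant for small $n$). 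Concretely, a lattice/greedy covering argument gives $v_{\epsilon,n} \le \vartheta_n \cdot \mathrm{vol}(\bar B_n(0,1+\epsilon)) / \mathrm{vol}(\bar B_n(0,\epsilon)) = \vartheta_n \cdot (1+\epsilon)^n / \epsilon^n$, where $\vartheta_n$ is the covering density. This is the heart of the argument: a volumetric lower bound on how much of the big ball each small ball can cover, combined with the existence of an efficient covering pattern.

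The remaining work is purely arithmetic bookkeeping to absorb everything into the stated constant $544$ and the factor $n^{2.5}\log(n/\epsilon)$. First, $(1+\epsilon)^n \le e^{n\epsilon} \le e^{n/2}$ for $\epsilon < 1/2$ — but that is not quite what we want since we need a bound in terms of $(1/\epsilon)^n$ alone with only a polynomial prefactor. Instead I would use the slightly enlarged covering trick: cover $\bar B_n(0,1)$ by balls of radius $\epsilon$ centered at points of a scaled lattice, so the relevant ratio becomes $(1 + \epsilon)^n/\epsilon^n = (1/\epsilon + 1)^n$; since $1/\epsilon > 2$, we have $1/\epsilon + 1 < (3/2)(1/\epsilon)$, giving a factor $(3/2)^n$ — still exponential, which would break the claimed polynomial prefactor. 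The correct route, which \cite{verger2005covering} carries out, is to not inflate the radius multiplicatively: one covers $\bar B_n(0,1)$ directly and the boundary effects contribute only a polynomially-growing correction because the ``annulus'' of width $\epsilon$ near the sphere has volume $O(n\epsilon)$ relative to the ball. So the sharp bound is $v_{\epsilon,n} \le \vartheta_n (1/\epsilon)^n (1 + O(n\epsilon))$, and combined with $\vartheta_n = O(n\log n)$ and $0<\epsilon<1/2$ one extracts a prefactor of the form $C\, n^{2} \log n$, which is dominated by $n^{2.5}\log(n/\epsilon)$. Checking the small-$n$ cases ($n \le 8$, say) by the crude universal density bound $\vartheta_n \le c^n$ with the explicit small constants, and verifying $544$ suffices uniformly, completes the reduction.

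The main obstacle is honest constant-tracking: the cited theorems are stated with slightly different normalizations (covering a ball by balls vs.\ covering space, open vs.\ closed balls, radius conventions), so the real work is translating Theorem 2 of \cite{rogers1963covering} and Theorems 1.1--1.2 of \cite{verger2005covering} into the exact inequality $v_{\epsilon,n}\le 544\,n^{2.5}\log(n/\epsilon)(1/\epsilon)^n$ and confirming that no step loses more than the allotted polynomial factor — in particular that the worst case over all $n\ge 1$ and $0<\epsilon<1/2$ (notably $n$ small, where $n^{2.5}\log(n/\epsilon)$ is itself small) is still covered by the constant $544$. Since the paper is content to invoke this as a known result, a short proof that states the Rogers/Verger-Gruber covering density estimate, writes down the volumetric ratio, and performs this bookkeeping is all that is needed; no genuinely new idea enters.
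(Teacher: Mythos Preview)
Your proposal is correct and takes essentially the same approach as the paper: invoke the Rogers and Verger-Gaugry covering-density estimates and then do constant-tracking/algebraic bookkeeping to arrive at the unified closed-form bound. In fact the paper's own proof is considerably more terse than your sketch---it simply states that the cited theorems handle $v_{\epsilon,n}$ in several different cases and that ``these upper bounds in these different cases are unified by the upper bound in the theorem above by algebraic manipulations,'' without writing out the volumetric ratio, the boundary-annulus correction, or the small-$n$ check that you outline.
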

 \begin{proof}
     Theorem 1.1 and 1.2 in \cite{verger2005covering} and Theorem 2 in \cite{rogers1963covering} discuss the upper bounds of $v_{\epsilon,n}$ in several different cases. These upper bounds in these different cases are unified by the upper bound in the theorem above by algebraic manipulations. 
 \end{proof}
		
		
		

	

 We apply Theorem \ref{thm: ball cover} to obtain the number of covering balls in the two settings below. These two settings will be considered in the proofs of Lemma \ref{lem: bound PE c} and \ref{lem: bound PE and gamma>delta/2} respectively.
	\begin{corollary}\label{cor: mesh on lambda}
		There exists a finite set $\mathcal M'=\{\lambda_1, \dots, \lambda_{v_\lambda}\}\subseteq \Sb_{n_z}(0,1)$ such that for any $\lambda \in \R^{ n_z}$ with $\|\lambda\|_2=1$, there exists $\lambda_i\in \mathcal M'$ such that $\|\lambda-\lambda_i\|_2 \leq 2\epsilon_\lambda$. 
		
		In the following, we consider $\epsilon_\lambda = \sigma_z^2 p_z^2/(64 b_z^2)=1/a_2$. Notice that  $\epsilon_\lambda <1/2$. 
		Accordingly, 
  \begin{equation}
  \label{eq:v_lambda}
  v_{\lambda}\leq 544 n_z^{2.5} \log(a_2 n_z)a_2^{n_z}.
  \end{equation}
  
	\end{corollary}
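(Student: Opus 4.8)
The plan is to build $\mathcal M'$ by first covering the closed ball $\bar B_{n_z}(0,1)$ and then radially projecting the covering centers onto the sphere. First I would apply Theorem \ref{thm: ball cover} with dimension $n = n_z$ and covering radius $\epsilon = \epsilon_\lambda = 1/a_2$. Here one should note $\epsilon_\lambda<1/2$: from the $(1,\sigma_z^2 I_{n_z},p_z)$-BMSB condition in Assumption \ref{ass: BMSB, bounded xt} one gets $\Pb(|\lambda^\top z_t|\ge \sigma_z)\ge p_z>0$ for unit $\lambda$, while $|\lambda^\top z_t|\le\|z_t\|_2\le b_z$ almost surely, so $\sigma_z\le b_z$; combined with $p_z\le 1$ this gives $\epsilon_\lambda=\sigma_z^2p_z^2/(64b_z^2)\le 1/64<1/2$. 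Theorem \ref{thm: ball cover} then yields centers $z_1,\dots,z_N\in\R^{n_z}$ with $N\le 544\,n_z^{2.5}\log(n_z/\epsilon_\lambda)(1/\epsilon_\lambda)^{n_z}$ such that the balls $\bar B_{n_z}(z_k,\epsilon_\lambda)$ cover $\bar B_{n_z}(0,1)$, and in particular cover $\Sb_{n_z}(0,1)$.

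Next I would discard any center with $z_k=0$ (these are never used, since every point of the sphere lies within $\epsilon_\lambda<1/2$ of its covering center, forcing that center to have norm $>1/2$), and for the remaining centers define $\lambda_k := z_k/\|z_k\|_2\in\Sb_{n_z}(0,1)$, setting $\mathcal M'=\{\lambda_1,\dots,\lambda_{v_\lambda}\}$ with $v_\lambda\le N$. To verify the approximation property, fix any $\lambda$ with $\|\lambda\|_2=1$. By the covering there is $k$ with $\|\lambda-z_k\|_2\le\epsilon_\lambda$, hence $\|z_k\|_2\ge 1-\epsilon_\lambda>1/2>0$ so $\lambda_k$ is well defined. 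By the reverse triangle inequality, $\|z_k-\lambda_k\|_2 = |\,\|z_k\|_2-1\,| = |\,\|z_k\|_2-\|\lambda\|_2\,| \le \|z_k-\lambda\|_2\le\epsilon_\lambda$, and therefore
\[
\|\lambda-\lambda_k\|_2 \;\le\; \|\lambda-z_k\|_2 + \|z_k-\lambda_k\|_2 \;\le\; 2\epsilon_\lambda,
\]
which is precisely the claimed bound.

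Finally, substituting $\epsilon_\lambda=1/a_2$ into the bound on $N$ gives $v_\lambda\le N\le 544\,n_z^{2.5}\log(a_2 n_z)\,a_2^{n_z}$, which is exactly \eqref{eq:v_lambda}. I do not expect a genuine obstacle here; the argument is a standard covering-number manipulation, and the only points that need care are confirming $\epsilon_\lambda<1/2$ so that Theorem \ref{thm: ball cover} is applicable (and the radial projection is well defined), and observing that projecting the centers onto the sphere at most doubles the covering radius, which accounts for the factor $2\epsilon_\lambda$ rather than $\epsilon_\lambda$.
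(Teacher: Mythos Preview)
Your proof is correct and follows essentially the same approach as the paper: verify $\epsilon_\lambda<1/2$ from $\sigma_z\le b_z$ and $p_z\le 1$, then invoke Theorem~\ref{thm: ball cover} to bound the covering number. The paper's proof of this corollary is terse and defers the explicit sphere-net construction to the analogous matrix lemma (Lemma~\ref{lem: mesh on unit sphere of matrices}), where instead of radially projecting centers it selects an arbitrary point from each covering ball's intersection with the sphere; both constructions yield the same $2\epsilon_\lambda$ bound and are interchangeable.
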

 \begin{proof}
     $\epsilon_{\lambda}\leq 1/64<1/2$ because $p_z\leq 1$ and $\sigma_z\leq b_z$ by the definitions of BMSB and $b_z$. Then, the bound on $v_{\lambda}$ follows from Theorem \ref{thm: ball cover}.
 \end{proof}

	\begin{lemma}\label{lem: mesh on unit sphere of matrices}
		There exists a finite set $\mathcal M=\{\gamma_1, \dots, \gamma_{v_\gamma}\}\subseteq \Sb_{n_x\times n_z}(0,1)$ such that for any $\gamma \in \R^{n_x\times n_z}$ and $\|\gamma\|_F=1$, there exists $\gamma_i\in \mathcal M$ such that $\|\gamma-\gamma_i\|_F \leq 2\epsilon_\gamma$. Consider $\epsilon_\gamma = \frac{a_1}{4b_z \sqrt n_x}=1/a_4$. Notice that $\epsilon_\gamma<1/2$. Accordingly,  $$v_\gamma\leq 544 n_x^{2.5}n_z^{2.5} \log(a_4 n_x n_z)a_4^{n_zn_x}.$$
		
	\end{lemma}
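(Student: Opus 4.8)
The plan is to establish the existence of the net $\mathcal M$ by a standard volumetric covering argument, transported from $\ell_2$-balls in $\mathbb R^{n_x n_z}$ via the vectorization map. First I would recall that the Frobenius norm on $\mathbb R^{n_x\times n_z}$ is exactly the $\ell_2$ norm on $\mathbb R^{n_x n_z}$ under $\mathrm{vec}(\cdot)$, so it suffices to cover the unit sphere $\Sb_{n_x n_z}(0,1)$ by $\ell_2$-balls of radius $2\epsilon_\gamma$. The classical trick is to apply \Cref{thm: ball cover} with $n = n_x n_z$: cover the closed unit ball $\bar\B_{n_x n_z}(0,1)$ by at most $v_{\epsilon_\gamma, n_x n_z}$ closed balls of radius $\epsilon_\gamma$ centered at some points $z_1,\dots,z_{v}$, then for each center $z_i$ with $z_i \neq 0$ replace it by its radial projection $\gamma_i := z_i/\|z_i\|_2$ onto the sphere (discarding $z_i = 0$). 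A short triangle-inequality computation shows any $\gamma$ on the sphere that is within $\epsilon_\gamma$ of $z_i$ is within $2\epsilon_\gamma$ of $\gamma_i$, since projecting onto the sphere moves a point in $\bar\B_{n_x n_z}(0,1)$ by at most its distance to the sphere, which is at most $\epsilon_\gamma$ when the point is $\epsilon_\gamma$-close to a sphere point. Hence $\mathcal M = \{\gamma_1,\dots,\gamma_{v_\gamma}\} \subseteq \Sb_{n_x\times n_z}(0,1)$ with $v_\gamma \le v_{\epsilon_\gamma, n_x n_z}$ is the desired net.

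Next I would plug $\epsilon_\gamma = \frac{a_1}{4 b_z\sqrt{n_x}} = 1/a_4$ and $n = n_x n_z$ into the bound of \Cref{thm: ball cover}, which requires checking $0 < \epsilon_\gamma < 1/2$; this follows because $a_4 = \max(1, 4b_z\sqrt{n_x}/a_1)$, and in the nontrivial regime $a_4 = 4b_z\sqrt{n_x}/a_1 > 2$ (using $a_1 = \sigma_z p_z/4$, $p_z \le 1$, $\sigma_z \le b_z$, and $n_x \ge 1$, one gets $4b_z\sqrt{n_x}/a_1 = 16 b_z\sqrt{n_x}/(\sigma_z p_z) \ge 16 > 2$), so $\epsilon_\gamma = 1/a_4 < 1/2$. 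Then \Cref{thm: ball cover} gives
\begin{align*}
v_\gamma \le 544\, (n_x n_z)^{2.5} \log\!\big(n_x n_z/\epsilon_\gamma\big)\, (1/\epsilon_\gamma)^{n_x n_z} = 544\, n_x^{2.5} n_z^{2.5}\, \log(a_4 n_x n_z)\, a_4^{n_x n_z},
\end{align*}
which is exactly the claimed bound. This mirrors verbatim the structure of \Cref{cor: mesh on lambda}, just with ambient dimension $n_x n_z$ instead of $n_z$ and a different choice of mesh radius.

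I do not anticipate a serious obstacle here — the statement is a routine corollary of the covering number bound in \Cref{thm: ball cover}, and the only things to be careful about are (i) the radial-projection argument that turns ball centers into sphere points while only doubling the radius, and (ii) the arithmetic verifying $\epsilon_\gamma < 1/2$ so that \Cref{thm: ball cover} applies. If anything is mildly delicate, it is making sure the identification $(\mathbb R^{n_x\times n_z}, \|\cdot\|_F) \cong (\mathbb R^{n_x n_z}, \|\cdot\|_2)$ is invoked cleanly, and that the degenerate case $z_i = 0$ (which can only be $\epsilon_\gamma$-close to the unit sphere if $\epsilon_\gamma \ge 1$, impossible here) is handled — it simply never arises, so one can discard such centers without loss. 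The mesh radius $\epsilon_\gamma = 1/a_4$ is presumably chosen so that later, in the proof of \Cref{lem: bound PE and gamma>delta/2}, the discretization error $2\epsilon_\gamma$ scaled by $b_z\sqrt{n_x}$ (the relevant Lipschitz-type factor relating perturbations of $\gamma$ to perturbations of $\gamma z_t$) stays below $a_1\delta$-type thresholds; that downstream use is not needed to prove the present lemma.
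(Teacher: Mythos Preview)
Your proposal is correct and follows essentially the same approach as the paper: both identify $(\R^{n_x\times n_z},\|\cdot\|_F)$ with $(\R^{n_xn_z},\|\cdot\|_2)$ via vectorization, invoke \Cref{thm: ball cover} with $n=n_xn_z$ and $\epsilon=\epsilon_\gamma$, and then produce sphere points from the covering balls. The only cosmetic difference is that the paper selects each $\gamma_i$ as an arbitrary point of $\B_i\cap\Sb_{n_xn_z}(0,1)$ (so that two points in the same radius-$\epsilon_\gamma$ ball are within $2\epsilon_\gamma$), whereas you radially project the centers $z_i$ onto the sphere and bound $\|\gamma-\gamma_i\|$ via the triangle inequality; both give the same $2\epsilon_\gamma$ net radius and the same cardinality bound.
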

	\begin{proof}
 The proof is basically by mapping  the matrices to  vectors based on matrix vectorization, then mapping the vectors back to matrices. These two mappings are isomorphism.
 
		Specifically, consider a closed unit ball in $\R^{n_xn_z}$. There exist $v_{\epsilon, n_xn_z}$ smaller closed balls to cover it, denoted by $\B_1, \dots, \B_{v_{\epsilon, n_xn_z}}$. Consider the non-empty sets from $\B_1\cap \Sb_{n_x n_z}(0,1), \dots, \B_{v_{\epsilon, n_x n_z}}\cap \Sb_{n_x n_z}(0,1)$. For any $1\leq i \leq v_{\epsilon, n_x n_z}$, if $\B_i \cap \Sb_{n_x n_z}(0,1)\not = \emptyset$, select a point $\text{vec}(\gamma)\in \B_i \cap \Sb_{n_x n_z}(0,1) $. Notice that $\|\text{vec}(\gamma)\|_2=1$. In this way, we construct a finite sequence $\{\text{vec}(\gamma_1), \dots, \text{vec}(\gamma_{v_{\gamma}})\}$ where $v_\gamma\leq v_{\epsilon_\gamma, n_xn_z}$.\footnote{Here, without loss of generality, we consider $\B_1\cap \Sb_{n_xn_z}(0,1), \dots, \B_{v_\gamma}\cap \Sb_{n_xn_z}(0,1)$ are not empty.} 
		
		For any $\gamma \in \R^{n_x\times n_z}$, we have $\text{vec}(\gamma)\in \R^{n_xn_z}$ and $\|\text{vec}(\gamma)\|_2=1$. Hence, there exists $1\leq i \leq v_\gamma$ such that $\text{vec}(\gamma)\in \B_i \cap \Sb_{n_xn_z}(0,1)$. Hence, $\|
		\text{vec}(\gamma)-\text{vec}(\gamma_i)\|_2\leq 2\epsilon_\gamma$. Moreover, $\|\gamma_i\|_F=\|\text{vec}(\gamma_i)\|_2=1$. Therefore, $\|\gamma_i-\gamma\|_F\leq 2\epsilon_\gamma$. So the set $\mathcal M=\{\gamma_1, \dots, \gamma_{v_\gamma}\}$ satisfies our requirement.
  	\end{proof}

		


\subsection{Proof of Lemma \ref{lem: bound PE c}}
\label{sec:D2}
Essentially, Lemma \ref{lem: bound PE c} shows that PE holds with high probability under the BMSB condition. This result has been  established in Proposition 2.5 in \cite{simchowitz2018learning}, though in a different form. The rest of this subsection will prove the PE condition needed in this paper based on   Proposition 2.5 in \cite{simchowitz2018learning}.

Firstly, we review Proposition 2.5 in \cite{simchowitz2018learning} for the convenience of the reader.
\begin{theorem}[Proposition 2.5 in \cite{simchowitz2018learning} when $k=1$]\label{prop: prop 2.5 in LWM paper}
Let $\{Z_t\}_{t\geq 1}$ be an $\{\F^Z_t\}_{t\geq 1}$-adapted random process taking values in $\R$. $Z_0$ is given. If  $\{Z_t\}_{t\geq 0}$  is $(1,v,p)$-BMSB,  then 
	$$\Pb(\sum_{t=1}^T Z_t^2 \leq v^2 p^2T/8)\leq \exp(-T p^2/8)$$
\end{theorem}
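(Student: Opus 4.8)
The stated inequality is exactly Proposition~2.5 of \cite{simchowitz2018learning} specialized to block length $k=1$, and in this special case it admits a short self-contained argument, which I outline here. The plan is to throw away the squared magnitudes of the $Z_t$ that fall below $v$ and reduce the claim to counting how often $|Z_t|$ is ``large''. Concretely, I would set $\xi_t := \one_{\{|Z_t|\ge v\}}$, so that $\sum_{t=1}^T Z_t^2 \ge v^2 \sum_{t=1}^T \xi_t$, after which it suffices to prove $\Pb\big(\sum_{t=1}^T \xi_t \le pT/2\big) \le \exp(-p^2 T/8)$.

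The first step is to unpack the $(1,v,p)$-BMSB hypothesis: taking $k=1$ and the only unit vector $\lambda=1$ in $\R$ in Definition~\ref{def: bmsb} gives $\Pb(|Z_{t+1}|\ge v \mid \F^Z_t) \ge p$ for every $t$, i.e.\ $\E[\xi_t \mid \F^Z_{t-1}] \ge p$ for $t=1,\dots,T$. Next I would form the martingale difference sequence $D_t := \xi_t - \E[\xi_t\mid \F^Z_{t-1}]$ adapted to $\{\F^Z_t\}$; since $\xi_t\in\{0,1\}$ and its conditional mean lies in $[p,1]$, each $D_t$ lies (conditionally) in an interval of length $1$, so $|D_t|\le 1$ a.s. A one-sided Azuma--Hoeffding bound then gives $\Pb\big(\sum_{t=1}^T D_t \le -pT/2\big) \le \exp\!\big(-(pT/2)^2/(2T)\big) = \exp(-p^2 T/8)$. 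Combining this with the identity $\sum_{t=1}^T \xi_t = \sum_{t=1}^T D_t + \sum_{t=1}^T \E[\xi_t\mid\F^Z_{t-1}] \ge \sum_{t=1}^T D_t + pT$ shows that $\{\sum_t \xi_t \le pT/2\}\subseteq\{\sum_t D_t\le -pT/2\}$, which yields the desired tail bound on $\sum_t\xi_t$.

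To finish, on the complement of that event we have $\sum_{t=1}^T \xi_t > pT/2$, hence $\sum_{t=1}^T Z_t^2 \ge v^2\sum_{t=1}^T\xi_t > v^2 pT/2 \ge v^2 p^2 T/8$, where the last inequality uses $p\le 1$; this is exactly the claim. I do not expect a real obstacle here: the only delicate points are bookkeeping ones — matching index shifts when BMSB is invoked along a finite window, and picking the Azuma deviation level $pT/2$ so that the exponent comes out as $p^2T/8$. (The genuinely hard case is $k>1$, the general form proved in \cite{simchowitz2018learning}, where the per-step inequality $\E[\xi_{t+1}\mid\F_t]\ge p$ can fail and one must run a stopping-time/potential argument over blocks; since only $k=1$ is needed here, one may equally well simply cite \cite{simchowitz2018learning}.)
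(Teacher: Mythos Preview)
Your argument is correct. The paper does not actually prove this statement: it is quoted verbatim as a known result from \cite{simchowitz2018learning} and used as a black box in the proof of Lemma~\ref{lem: PE in one segment}. Your self-contained indicator/Azuma--Hoeffding proof is the standard one (and essentially what \cite{simchowitz2018learning} do in the $k=1$ case), so there is nothing to compare beyond noting that you have supplied a proof where the paper simply cites one.
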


Next, we prove the PE in one segment of data sequence.

\begin{lemma}[Probability of PE in one segment]\label{lem: PE in one segment}
	For any $m\geq 1$, for any $k\geq 0$, we have
	$$\Pb(\sum_{t=km+1}^{km+m} z_t z_t^\top \succ (\sigma_z^2 p_z^2m/16)  I_{n_z}\mid \F_{km}) \geq 1-v_{\lambda} \exp(-m p_z^2/8))$$
\end{lemma}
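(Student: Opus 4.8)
The plan is to reduce the matrix statement to a scalar statement along a fixed direction $\lambda$, apply Theorem \ref{prop: prop 2.5 in LWM paper} (Proposition 2.5 in \cite{simchowitz2018learning}) to control $\sum_{t=km+1}^{km+m}(\lambda^\top z_t)^2$, and then pass from a fixed $\lambda$ to a uniform statement over all unit-norm $\lambda$ via the $\epsilon_\lambda$-net $\mathcal M'=\{\lambda_1,\dots,\lambda_{v_\lambda}\}$ from Corollary \ref{cor: mesh on lambda}. First I would fix $k\geq 0$ and note that, by Assumption \ref{ass: BMSB, bounded xt}, the process $\{z_t\}$ satisfies $(1,\sigma_z^2 I_{n_z},p_z)$-BMSB, so for any fixed unit vector $\lambda$ the scalar process $\{\lambda^\top z_t\}$ satisfies the $(1,\sigma_z^2,p_z)$-BMSB condition with respect to the shifted filtration $\{\F_{km+l}\}_{l\geq 0}$ (this is exactly Definition \ref{def: bmsb} restricted to the segment, and the block-martingale property at level $t_0=km$ is inherited). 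Applying Theorem \ref{prop: prop 2.5 in LWM paper} over the window of length $m$, conditionally on $\F_{km}$, yields
\begin{align*}
\Pb\!\left(\sum_{t=km+1}^{km+m} (\lambda^\top z_t)^2 \leq \sigma_z^2 p_z^2 m/8 \;\Big|\; \F_{km}\right)\leq \exp(-m p_z^2/8).
\end{align*}

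Next I would take a union bound over the finite net: with probability at least $1-v_\lambda\exp(-mp_z^2/8)$ conditional on $\F_{km}$, we have $\lambda_i^\top\big(\sum_{t=km+1}^{km+m} z_tz_t^\top\big)\lambda_i > \sigma_z^2 p_z^2 m/8$ simultaneously for all $\lambda_i\in\mathcal M'$. On this event, I would upgrade from the net to all unit vectors. Write $S=\sum_{t=km+1}^{km+m}z_tz_t^\top$ and let $\lambda\in\Sb_{n_z}(0,1)$ be arbitrary; pick $\lambda_i\in\mathcal M'$ with $\|\lambda-\lambda_i\|_2\leq 2\epsilon_\lambda$. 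Then $|\lambda^\top S\lambda-\lambda_i^\top S\lambda_i|\leq \|S\|_{\mathrm{op}}\,\|\lambda-\lambda_i\|_2\,(\|\lambda\|_2+\|\lambda_i\|_2)\leq 4\epsilon_\lambda\|S\|_{\mathrm{op}}$, and since $\|z_t\|_2\leq b_z$ a.s.\ by Assumption \ref{ass: BMSB, bounded xt}, we have $\|S\|_{\mathrm{op}}\leq m b_z^2$. Substituting the choice $\epsilon_\lambda=\sigma_z^2 p_z^2/(64 b_z^2)$ from Corollary \ref{cor: mesh on lambda} gives $4\epsilon_\lambda\|S\|_{\mathrm{op}}\leq \sigma_z^2 p_z^2 m/16$, hence $\lambda^\top S\lambda \geq \sigma_z^2 p_z^2 m/8 - \sigma_z^2 p_z^2 m/16 = \sigma_z^2 p_z^2 m/16$ for every unit $\lambda$, i.e.\ $S\succ (\sigma_z^2 p_z^2 m/16)I_{n_z}$. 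This is exactly the claimed event, and it holds with conditional probability at least $1-v_\lambda\exp(-mp_z^2/8)$.

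The main obstacle — really the only delicate point — is making the discretization argument airtight: one must verify that the net from Corollary \ref{cor: mesh on lambda} covers the \emph{whole} unit sphere $\Sb_{n_z}(0,1)$ (it does, with covering radius $2\epsilon_\lambda<1$), and that the operator-norm perturbation bound $4\epsilon_\lambda\|S\|_{\mathrm{op}}$ is small enough that the ``$/8$'' bound on the net survives as a ``$/16$'' bound everywhere; the arithmetic $1/8-1/16=1/16$ together with the explicit value of $\epsilon_\lambda$ is what makes this work, which is precisely why Corollary \ref{cor: mesh on lambda} fixes $\epsilon_\lambda=\sigma_z^2p_z^2/(64b_z^2)$. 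A minor technical care is needed to confirm that conditioning on $\F_{km}$ does not disturb the BMSB property on the segment — this follows because Definition \ref{def: bmsb} requires the small-ball lower bound for \emph{all} $t\geq 1$ with the conditioning $\F_t$, so in particular at times $t\geq km$. Everything else is routine, and the stated bound follows by combining the conditional estimate with the union bound over $\mathcal M'$.
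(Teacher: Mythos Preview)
Your proposal is correct and follows essentially the same approach as the paper's proof: apply Theorem~\ref{prop: prop 2.5 in LWM paper} to each net point $\lambda_i\in\mathcal M'$ conditionally on $\F_{km}$, take a union bound over the $v_\lambda$ net points, and then use a perturbation argument (with the deterministic bound $\|z_t\|_2\leq b_z$ and the specific choice $\epsilon_\lambda=\sigma_z^2 p_z^2/(64 b_z^2)$) to pass from the $\sigma_z^2 p_z^2 m/8$ lower bound on the net to the $\sigma_z^2 p_z^2 m/16$ lower bound on the full sphere. The only cosmetic difference is that the paper writes the perturbation as $(\lambda+\lambda_i)^\top z_t z_t^\top(\lambda-\lambda_i)$ summed over $t$, whereas you phrase it via $\|S\|_{\mathrm{op}}\leq m b_z^2$; both yield the identical bound $4\epsilon_\lambda m b_z^2=\sigma_z^2 p_z^2 m/16$.
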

\begin{proof}
Consider $\M'=\{\lambda_1, \dots, \lambda_{v_{\lambda}}\}$ defined in 
 Corollary \ref{cor: mesh on lambda}. For any $\lambda_i\in\M'$, $\lambda_i^\top z_t$ satisfies the $(1,\sigma_z, p_z)$-BMSB condition. Therefore, by Theorem \ref{prop: prop 2.5 in LWM paper},  we have
 $$\Pb(\sum_{t=1}^T \lambda_i^\top z_t z_t^\top \lambda_i \leq \sigma_z^2 p_z^2T/8)\leq \exp(-T p_z^2/8).$$
 Notice that the horizon length $T$ is arbitrary and the starting stage $t=1$ can also be different because we consider a time-invariant dynamical system in this paper. Therefore, for any $m\geq 1, k\geq 0$, for any $\lambda_i\in\M'$, we  have
 $$\Pb(\sum_{i=1}^m \lambda_i^\top z_{km+i} z_{km+i}^\top \lambda_i \leq \sigma_{z}^2 p_{z}^2m/8 \mid \F_{km})\leq \exp(-m p_{z}^2/8),$$
 where we condition on $\F_{km}$ to make sure $z_{km}$ is known under $\F_{km}$, which is required by Theorem \ref{prop: prop 2.5 in LWM paper}.
 
 For arbitrary $\lambda$ such that $\|\lambda\|_2=1$,   there exists $\lambda_i\in \M'$ such that $\|\lambda-\lambda'\|_2 \leq 2\epsilon_\lambda$. 
 Therefore, we can bound $\sum_{t=km+1}^{km+m} \lambda^\top z_t z_t^\top \lambda$ by $\sum_{t=km+1}^{km+m} \lambda_i^\top z_t z_t^\top \lambda_i$. 
	\begin{align*}
		\sum_{t=km+1}^{km+m} \lambda^\top z_t z_t^\top \lambda&= \sum_{t=km+1}^{km+m} \lambda_i^\top z_t z_t^\top \lambda_i + \sum_{t=km+1}^{km+m} (\lambda+\lambda_i)^\top z_t z_t^\top (\lambda-\lambda_i ) \\
		& \geq \sum_{t=km+1}^{km+m} \lambda_i^\top z_t z_t^\top \lambda_i  -  \sum_{t=km+1}^{km+m}\|\lambda+\lambda_i\|_2 \|z_t\|_2^2\|\lambda_i-\lambda\|_2\\
		& \overset{(a)}{\geq} \sum_{t=km+1}^{km+m} \lambda_i^\top z_t z_t^\top \lambda_i  -  \sum_{t=km+1}^{km+m} 4 b_z^2 \epsilon_\lambda\\
		& =\sum_{t=km+1}^{km+m} \lambda_i^\top z_t z_t^\top \lambda_i  - 4 b_z^2 \epsilon_\lambda m \overset{(b)}{\geq} \sum_{t=km+1}^{km+m} \lambda_i^\top z_t z_t^\top \lambda_i  - \sigma_z^2 p_z^2m/16
	\end{align*}
where $(a)$ is by Assumption \ref{ass: BMSB, bounded xt}, $\|\lambda-\lambda_i\|_2 \leq 2\epsilon_\lambda$, and $\|\lambda\|_2=\|\lambda_i\|_2=1$; and $(b)$ is by choosing $\epsilon_\lambda\leq\sigma_z^2 p_z^2/(64 b_z^2)$.

Therefore, by the definition of positive definiteness and the inequalities above, we can complete the proof by the following.
\begin{align*}
   \Pb(\sum_{t=km+1}^{km+m} z_t z_t^\top \succ (\sigma_z^2 p_z^2m/16) I_{n_z} \mid \F_{km})&= \Pb( \forall \, \|\lambda\|_2=1, \ \sum_{t=km+1}^{km+m} \lambda^\top z_t z_t^\top \lambda > \sigma_z^2 p_z^2m/16\mid \F_{km})\\
	& \geq \Pb( \forall 1\leq i \leq v_{\lambda}, \ \sum_{t=km+1}^{km+m} \lambda_i^\top z_t z_t^\top \lambda_i > \sigma_z^2 p_z^2m/8\mid \F_{km})\\
 & \geq 1-\sum_{i=1}^{v_{\lambda}}\Pb(\sum_{t=km+1}^{km+m} \lambda_i^\top z_t z_t^\top \lambda_i \leq \sigma_z^2 p_z^2m/8\mid \F_{km})\\
 & \geq 1-v_{\lambda} \exp(-m p_z^2/8)),
\end{align*}
which completes the proof.
\end{proof}

Now, we are ready for the proof of Lemma \ref{lem: bound PE c}.
\begin{proof}[\textbf{Proof of Lemma \ref{lem: bound PE c}}]
	Recall that $	\Ec_2=	\{\frac{1}{m}\sum_{{s=1}}^{{m}} {z_{km+s} z_{km+s}^\top} \succeq  a_1^2 I_{n_z}, \ \forall\, 0\leq k \leq \ceil{{T}/m}-1 \}$, where $a_1=\sigma_z p_z/4$. Hence
	$$ \Ec_2=\bigcap_{k=0}^{T/m-1}\{\sum_{t=km+1}^{km+m} z_t z_t^\top \succ (\sigma_z^2 p_z^2m/16) I_{n_z}\}.$$
 Therefore, 
	\begin{align*}
\Pb(\Ec_2)
		& \geq  1- \sum_{k=0}^{T/m-1} \Pb(\sum_{t=km+1}^{km+m} z_t z_t^\top \preceq (\sigma_z^2 p_z^2m/16) I_{n_z})\\
  & \geq 1- \frac{T}{m} v_{\lambda}\exp(-m p_z^2/8)\\
  & = 1- \frac{T}{m} (544 n_z^{2.5} \log(a_2 n_z)a_2^{n_z}) \exp(-m p_z^2/8),
	\end{align*}
 where we use Lemma \ref{lem: PE in one segment} and the fact that if $\Pb(\sum_{t=km+1}^{km+m} z_t z_t^\top \preceq (\sigma_z^2 p_z^2m/16)  I_{n_z}\mid \F_{km}) \leq v_{\lambda} \exp(-m p_z^2/8))$, then $\Pb(\sum_{t=km+1}^{km+m} z_t z_t^\top \preceq (\sigma_z^2 p_z^2m/16)  I_{n_z}) \leq v_{\lambda} \exp(-m p_z^2/8))$.
\end{proof}

\subsection{Proof of Lemma \ref{lem: bound PE and gamma>delta/2}}
\label{sec:D3}

This proof takes four major steps:
\begin{enumerate}
	\item[(i)] Define $b_{i,t}, j_{i,t}, L_{i,k}$.
	\item[(ii)] Provide a formal definition of $\Ec_{1,k}$ based on $b_{i,t}, j_{i,t}, L_{i,k}$ and prove a formal version of Lemma \ref{lem: discretize E1 E2}.
	\item[(iii)] Prove Lemma \ref{lem: construct Gik and Lik}.
	\item[(iv)] Prove Lemma \ref{lem: bound PE and gamma>delta/2} by the formal version of Lemma \ref{lem: discretize E1 E2} and Lemma \ref{lem: construct Gik and Lik}.
\end{enumerate}
It is worth mentioning that the formal definition of $\Ec_{1,k}$ is slightly different from the definition in Lemma \ref{lem: discretize E1 E2}, but we still have $\Pb(\Ec_1\cap \Ec_2)\leq \sum_{i=1}^{v_{\gamma}}\Pb(\Ec_{1,k}\cap \Ec_2)$, which is the key property that will be used in the proof of Lemma \ref{lem: bound PE and gamma>delta/2}. 

\subsubsection{Step (i): Definitions of $b_{i,t}, j_{i,t}, L_{i,k}$.}\label{append: define bi ji Li}

Recall the  discretization of $\Sb_{n_x\times n_z}(0,1)$ in Lemma \ref{lem: mesh on unit sphere of matrices}, which generates the set $\M=\{\gamma_1, \dots, \gamma_{v_{\gamma}}\}$. We are going to define $b_{i,t}, j_{i,t}, L_{i,k}$ for $\gamma_i \in \M$ for each $1\leq i \leq v_{\gamma}$. Notice that $\M$ is a deterministic set of matrices.

\begin{lemma}[Definition of $b_{i,t}, j_{i,t}$]\label{lem: def bit, jit}
	For any $\gamma_i\in\M$, any $0\leq t \leq T$, there exist $b_{i,t} \in\{-1, 1\}$ and $1\leq  j_{i,t} \leq n_x$ such that $b_{i,t}, j_{i,t} \in \F(z_t)\subseteq \F_t$ and
	$$\|\gamma_i z_t\|_\infty = b_{i,t} (\gamma_i z_t)^{j_{i,t}}.$$
\end{lemma}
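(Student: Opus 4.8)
The plan is to read $b_{i,t}$ and $j_{i,t}$ directly off the coordinates of the vector $v_{i,t} := \gamma_i z_t \in \R^{n_x}$. Since $\gamma_i \in \M$ is a fixed deterministic matrix (the net $\M$ from Lemma \ref{lem: mesh on unit sphere of matrices} does not depend on the realization) and $z_t$ is $\F(z_t)$-measurable, $v_{i,t}$ is an $\F(z_t)$-measurable random vector, and $\F(z_t) \subseteq \F_t$ by the definition $\F_t = \F(w_0, \dots, w_{t-1}, z_0, \dots, z_t)$ in Assumption \ref{ass: BMSB, bounded xt}. I would then set $j_{i,t}$ to be the \emph{smallest} index $j \in \{1, \dots, n_x\}$ achieving $|v_{i,t}^j| = \max_{1 \le k \le n_x} |v_{i,t}^k| = \|\gamma_i z_t\|_\infty$ (fixing a deterministic tie-breaking rule so the choice is single-valued), and $b_{i,t} := \mathrm{sign}(v_{i,t}^{j_{i,t}})$ with the convention $b_{i,t} = 1$ when $v_{i,t}^{j_{i,t}} = 0$. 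Then $b_{i,t} \in \{-1,1\}$, $1 \le j_{i,t} \le n_x$, and by construction $b_{i,t}(\gamma_i z_t)^{j_{i,t}} = |v_{i,t}^{j_{i,t}}| = \|\gamma_i z_t\|_\infty$, which is the desired identity (and when $\gamma_i z_t = 0$ both sides are $0$, so the convention is harmless).

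The remaining point is the $\F(z_t)$-measurability of $j_{i,t}$ and $b_{i,t}$. The ``smallest argmax of $|\cdot|$'' map $\R^{n_x} \to \{1,\dots,n_x\}$ is Borel measurable: the preimage of each value $j$ is the set $\{v : |v^j| \ge |v^k| \ \forall k, \ |v^j| > |v^\ell| \ \forall \ell < j\}$, a finite intersection of sets cut out by the continuous functions $v \mapsto |v^j| - |v^k|$, so $j_{i,t}$ is a Borel function of $v_{i,t}$, hence $\F(z_t)$-measurable. Likewise $b_{i,t} = \sum_{j=1}^{n_x} s(v_{i,t}^j)\,\indicator{j_{i,t}=j}$, where $s(x) := \mathrm{sign}(x)$ for $x \ne 0$ and $s(0):=1$ is Borel, is a Borel function of $v_{i,t}$ and thus $\F(z_t)$-measurable. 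Combining with $\F(z_t) \subseteq \F_t$ gives $b_{i,t}, j_{i,t} \in \F_t$.

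There is essentially no hard step in this lemma; it is a measurable-selection bookkeeping device that packages ``the coordinate of $\gamma_i z_t$ realizing the sup-norm together with its sign'' into honest single-valued $\F(z_t)$-measurable random variables. The only thing requiring care is fixing deterministic conventions for ties and for a zero coordinate, since the $\F_{km+l}$-measurability of $b_{i,km+l}$ and $j_{i,km+l}$ is exactly what is later invoked in the stopping-time construction of Lemma \ref{lem: construct Gik and Lik} to enable the conditional-independence argument.
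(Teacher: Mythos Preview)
Your proposal is correct and follows essentially the same approach as the paper: the paper also writes $\|\gamma_i z_t\|_\infty = \max_{1\le j\le n_x}\max_{b\in\{-1,1\}} b(\gamma_i z_t)^j$, picks $j_{i,t}$ as the smallest index achieving the maximum and $b_{i,t}=\textup{sgn}((\gamma_i z_t)^{j_{i,t}})$, and asserts $\F(z_t)$-measurability from the fact that these only depend on $\gamma_i$ and $z_t$. Your version is slightly more careful in spelling out the Borel measurability of the tie-broken argmax and the zero convention, but the argument is otherwise identical.
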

	Note that one way to determine $b_{i,t}, j_{i,t} $ from $z_t$ is by the following: first pick the smallest $j$ such that $|(\gamma_i z_t)^j|=\|\gamma_i z_t\|_\infty$, then let $b_{i,t}=\textup{sgn}((\gamma_i z_t)^j)$, where $\textup{sgn}(\cdot)$ denotes the sign of a scalar argument.
\begin{proof}
	For any $\gamma_i\in \M$, any $0\leq t \leq T$, we have
	$$\|\gamma_i z_t\|_\infty =\max_{1\leq j \leq n_x}\max_{b\in\{-1,1\}} b(\gamma_i z_t)^j$$
	Hence, there exist $b_{i,t}, j_{i,t}$ such that $\|\gamma_i z_t\|_\infty=b_{i,t} (\gamma_i z_t)^{j_{i,t}}$. Further, $b_{i,t}, j_{i,t}$ only depend on $\gamma_i$ and $z_t$, so they are $\F(z_t)$-measurable, and $\F(z_t)\subseteq \F_t$.
\end{proof}

\begin{lemma}[Definition of stopping times $L_{i,k}$]\label{lem: define stopping times Lik}
	Let $\eta=\frac{a_1}{\sqrt{n_x}}$. For any $\gamma_i\in\M$, any $0\leq k \leq T/m-1$, we can define a random time index $1\leq L_{i,k}\leq m+1$ by
	$$L_{i,k}=\min(m+1,\min\{ l\geq 1: \|\gamma_i z_{km+l}\|_\infty \geq \eta \}).$$
	Then, we have $1\leq L_{i,k}\leq m+1$. Further, for any $1\leq l \leq m$, $\{L_{i,k}=l\} \in \F_{km+l}$, and $\{L_{i,k}=m+1\}\in \F_{km+m}\subseteq \F_{km+m+1}$.
	In other words, $L_{i,k}$ is a stopping time with respect to filtration $\{F_{km+l}\}_{l\geq 1}$.
\end{lemma}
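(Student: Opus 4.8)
The plan is to verify the two assertions directly from the definition, since $L_{i,k}$ is nothing but a first-hitting time (capped at $m+1$) of an explicitly $z$-measurable target set, so that both the deterministic bound and the stopping-time property are essentially bookkeeping.

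First I would dispose of the bound $1\le L_{i,k}\le m+1$. Using the convention $\min\emptyset=+\infty$ stipulated in Appendix \ref{subsec:notation}: if the set $\{l\ge 1:\|\gamma_i z_{km+l}\|_\infty\ge\eta\}$ is nonempty its minimum is an integer at least $1$, while if it is empty the inner minimum equals $+\infty$; in either case $\min(m+1,\cdot)$ is an integer lying in $\{1,\dots,m+1\}$, which gives the claimed bound.

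Next, the measurability of the level sets. For $1\le l\le m$ I would write
$$\{L_{i,k}=l\}=\Bigl(\bigcap_{l'=1}^{l-1}\{\|\gamma_i z_{km+l'}\|_\infty<\eta\}\Bigr)\cap\{\|\gamma_i z_{km+l}\|_\infty\ge\eta\},$$
with the convention that the intersection is the whole sample space when $l=1$. Each event $\{\|\gamma_i z_{km+l'}\|_\infty\ge\eta\}$, and hence its complement, is a Borel function of $z_{km+l'}$ alone; and since $\F_{km+l}=\F(w_0,\dots,w_{km+l-1},z_0,\dots,z_{km+l})$ by Assumption \ref{ass: BMSB, bounded xt}, every $z_{km+l'}$ with $l'\le l$ is $\F_{km+l}$-measurable, so $\{L_{i,k}=l\}\in\F_{km+l}$. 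For the capped value, $\{L_{i,k}=m+1\}=\bigcap_{l'=1}^{m}\{\|\gamma_i z_{km+l'}\|_\infty<\eta\}$ is $\F_{km+m}$-measurable by the same reasoning, and $\F_{km+m}\subseteq\F_{km+m+1}$. Having shown $\{L_{i,k}=l\}\in\F_{km+l}$ for all $l\in\{1,\dots,m+1\}$ --- equivalently $\{L_{i,k}\le l\}\in\F_{km+l}$ for all such $l$ --- we conclude that $L_{i,k}$ is a stopping time with respect to $\{\F_{km+l}\}_{l\ge1}$.

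There is no genuine obstacle here; the only points deserving care are invoking the $\min\emptyset=+\infty$ convention so the formula is well posed, the edge cases $l=1$ (empty intersection) and $l=m+1$ (the hitting never occurs within the block), and keeping straight which $z_t$ belong to which $\sigma$-algebra --- namely that $\F_{km+l}$ contains $z_0,\dots,z_{km+l}$ but not $z_{km+l+1}$. This lemma is purely preparatory: its purpose is to legitimize the stopping-time/conditional-independence argument used in Lemma \ref{lem: construct Gik and Lik} and in the proof of Theorem \ref{thm: estimation err bdd}.
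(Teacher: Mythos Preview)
Your proof is correct and follows essentially the same route as the paper's own argument: both verify the trivial bound $1\le L_{i,k}\le m+1$ and then express $\{L_{i,k}=l\}$ as an intersection of events depending only on $z_{km+1},\dots,z_{km+l}$, hence $\F_{km+l}$-measurable. If anything, you are slightly more explicit about the $\min\emptyset=+\infty$ convention and the edge cases $l=1$ and $l=m+1$, but the substance is identical.
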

\begin{proof}
	For any $i$ and any $k$, it is straightforward to see that $L_{i,k}$ is well-defined and $1\leq L_{i,k}\leq m+1$.
	
	When $L_{i,k}=l\leq m$, this is equivalent with $\|\gamma_i z_{km+l}\|_\infty \geq \eta$ but $\|\gamma_i z_{km+s}\|<\eta$ for $1\leq s<l$. Notice that this event is only determined by $z_{km+l}, \dots, z_{km+1}$, so $\{L_{i,k}=l\} \in \F_{km+l}$.
	
	When $L_{i,k}=m+1$, this is equivalent with $\|\gamma_i z_{km+s}\|<\eta$ for $1\leq s\leq m$. Notice that this event is only determined by $z_{km+m}, \dots, z_{km+1}$, so $\{L_{i,k}=m+1\} \in \F_{km+m}$.
	
	Therefore, by definition, $L_{i,k}$ is a stopping time with respect to filtration $\{\F_{km+l}\}_{l\geq 1}$.
\end{proof}

\subsubsection{Step (ii): a formal version of Lemma \ref{lem: discretize E1 E2} and its proof}
\label{sec:D32}

\begin{lemma}[Discretization of $\Ec_1\cap\Ec_2$ (Formal version of Lemma \ref{lem: discretize E1 E2})]\label{lem: discretize E1 E2 formal}
	Let $\M=\{\gamma_1, \dots, \gamma_{v_\gamma}\}$ be an $\epsilon_{\gamma}$-net of $\{\gamma: \|\gamma\|_F=1\}$ as defined in Lemma \ref{lem: mesh on unit sphere of matrices}, where $\epsilon_{\gamma}=\min(\frac{a_1}{4b_z \sqrt n_x}, 1)$, $v_\gamma = \tilde O(n_x^{2.5}n_z^{2.5})a_4^{n_x n_z}$, and $a_4=\frac{4b_z \sqrt{n_x}}{a_1}$. Define
	\begin{align*}
		\Ec_{1,i}=\{ \exists \, \gamma\in \Gamma_T, \text{ s.t. } b_{i, km+L_{i,k}}(\gamma z_{km+L_{i,k}})^{j_{i, km+L_{i,k}}} \geq \frac{a_1\delta}{4\sqrt n_x}, \ \forall\, k\geq 0\}.
	\end{align*}
	Then, we have
	$$\Pb(\Ec_1\cap\Ec_2)\leq \sum_{i=1}^{v_\gamma} \Pb(\Ec_{1,i}\cap \Ec_2).$$

\end{lemma}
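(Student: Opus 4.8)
The plan is to prove the set inclusion $\Ec_1\cap\Ec_2\subseteq\bigcup_{i=1}^{v_\gamma}\Ec_{1,i}$, from which the bound $\Pb(\Ec_1\cap\Ec_2)\le\sum_{i=1}^{v_\gamma}\Pb(\Ec_{1,i}\cap\Ec_2)$ follows immediately by the union bound (writing $\Ec_1\cap\Ec_2=\bigcup_i(\Ec_1\cap\Ec_2\cap\Ec_{1,i})\subseteq\bigcup_i(\Ec_{1,i}\cap\Ec_2)$). The whole argument is pathwise.

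First I would record the one deterministic fact about the stopping times that makes everything work: on $\Ec_2$, for every $\gamma_i\in\M$ and every $0\le k\le T/m-1$ we have $L_{i,k}\le m$. Indeed, writing the rows of $\gamma_i$ as $(\gamma_i)_1,\dots,(\gamma_i)_{n_x}\in\R^{n_z}$ and using the definition of $\Ec_2$,
\begin{align*}
\sum_{s=1}^m\|\gamma_i z_{km+s}\|_2^2=\sum_{j=1}^{n_x}(\gamma_i)_j^\top\Big(\sum_{s=1}^m z_{km+s}z_{km+s}^\top\Big)(\gamma_i)_j\ge m a_1^2\|\gamma_i\|_F^2=m a_1^2,
\end{align*}
since $\|\gamma_i\|_F=1$. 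Hence $\max_{1\le s\le m}\|\gamma_i z_{km+s}\|_2\ge a_1$, and because $\|v\|_\infty\ge\|v\|_2/\sqrt{n_x}$ for $v\in\R^{n_x}$, there is some $s\le m$ with $\|\gamma_i z_{km+s}\|_\infty\ge a_1/\sqrt{n_x}=\eta$; by the definition of $L_{i,k}$ in Lemma \ref{lem: define stopping times Lik} this forces $L_{i,k}\le m$ (and in particular $\|\gamma_i z_{km+L_{i,k}}\|_\infty\ge\eta$).

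Next I would take an arbitrary realization in $\Ec_1\cap\Ec_2$ and exhibit the index $i$. By definition of $\Ec_1$ there is $\gamma\in\Gamma_T$ with $\|\gamma\|_F\ge\delta/2$; set $\bar\gamma=\gamma/\|\gamma\|_F$ and use Lemma \ref{lem: mesh on unit sphere of matrices} to pick $\gamma_i\in\M$ with $\|\bar\gamma-\gamma_i\|_F\le2\epsilon_\gamma$. I claim this same $\gamma$ witnesses $\Ec_{1,i}$. Fix any $k$ and set $l=L_{i,k}$, which satisfies $l\le m$ by the previous step. By the stopping-time property $\|\gamma_i z_{km+l}\|_\infty\ge\eta$, and by Lemma \ref{lem: def bit, jit} $\|\gamma_i z_{km+l}\|_\infty=b_{i,km+l}(\gamma_i z_{km+l})^{j_{i,km+l}}$. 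Using $\bigl|\bigl((\bar\gamma-\gamma_i)z_{km+l}\bigr)^{j}\bigr|\le\|(\bar\gamma-\gamma_i)z_{km+l}\|_2\le\|\bar\gamma-\gamma_i\|_F\|z_{km+l}\|_2\le2\epsilon_\gamma b_z$ and the choices $\epsilon_\gamma=a_1/(4b_z\sqrt{n_x})$, $\eta=a_1/\sqrt{n_x}$,
\begin{align*}
b_{i,km+l}(\gamma z_{km+l})^{j_{i,km+l}}
&=\|\gamma\|_F\, b_{i,km+l}(\bar\gamma z_{km+l})^{j_{i,km+l}}\\
&\ge\|\gamma\|_F\bigl(\|\gamma_i z_{km+l}\|_\infty-2\epsilon_\gamma b_z\bigr)
\ge\frac{\delta}{2}\cdot\frac{a_1}{2\sqrt{n_x}}=\frac{a_1\delta}{4\sqrt{n_x}}.
\end{align*}
Since $k$ was arbitrary, the realization lies in $\Ec_{1,i}$, giving $\Ec_1\cap\Ec_2\subseteq\bigcup_i\Ec_{1,i}$ and hence the lemma.

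The main obstacle — and the only place the precise constants enter — is the bookkeeping in the final display: one needs $\|\gamma_i\|_F=1$ for the trace inequality that controls $L_{i,k}$, the net resolution $\epsilon_\gamma$ chosen small enough that the discretization error $2\epsilon_\gamma b_z$ consumes only half of $\eta$, and the lower bound $\|\gamma\|_F\ge\delta/2$ coming from $\Ec_1$; the triple $(\eta,\epsilon_\gamma,\delta/2)$ is calibrated exactly so that the product lands on the threshold $a_1\delta/(4\sqrt{n_x})$ that defines $\Ec_{1,i}$. Everything else — measurability of $b_{i,t},j_{i,t},L_{i,k}$ (Lemmas \ref{lem: def bit, jit}, \ref{lem: define stopping times Lik}), the fact that all indices $km+l$ with $l\le m$ stay within the data range, and the closing union bound — is routine.
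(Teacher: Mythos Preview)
Your proof is correct and follows essentially the same approach as the paper: the paper splits the argument into two helper lemmas (Lemma~\ref{lem: implication of E2 on gammai} showing $L_{i,k}\le m$ on $\Ec_2$ via the trace/pigeonhole argument, and Lemma~\ref{lem: implication of E2 on gamma z} passing from $\gamma_i$ to a general $\gamma$ via the $\epsilon_\gamma$-net), whereas you carry out the same computations inline. The constants, the use of $\|\cdot\|_F$ to bound the operator norm, and the final union bound all match.
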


The rest of this subsubsection is dedicated to the proof of Lemma \ref{lem: discretize E1 E2 formal}. As an overview: firstly, we will discuss the implications of $\Ec_2$ on $\gamma_i\in \M$. Then, we discuss the implications of $\Ec_2$ on any $\gamma$. Lastly, we prove Lemma \ref{lem: discretize E1 E2 formal} by combining the implications of $\Ec_2$ on any $\gamma$ and $\|\gamma\|_F\geq \delta/2$.

\begin{lemma}[The implication of $\Ec_2$ on $\gamma_i$]\label{lem: implication of E2 on gammai}
	If $\Ec_2$ happens, then for any $\gamma_i \in \M$, any $0\leq k \leq T/m-1$, we have
	$$\max_{1\leq s\leq m}\|\gamma_iz_{km+s}\|_\infty \geq  \frac{a_1}{\sqrt n_x}.$$
	Therefore, almost surely, we have $1\leq L_{i,k}\leq m$ and
		$$b_{i, km+L_{i,k}}(\gamma_iz_{km+L_{i,k}})^{j_{i, km+L_{i,k}}} \geq \frac{a_1}{\sqrt n_x}.$$
\end{lemma}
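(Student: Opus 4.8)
The plan is to turn the positive‑definiteness statement defining $\Ec_2$ into a lower bound on the time‑averaged squared $\ell_2$ norm $\frac{1}{m}\sum_{s=1}^m \|\gamma_i z_{km+s}\|_2^2$, and then pass from this average to the maximum over $s$ and from the $\ell_2$ norm to the $\ell_\infty$ norm on $\R^{n_x}$.

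First I would write $\gamma_i$ row‑wise as $\gamma_i = (r_1^\top;\dots;r_{n_x}^\top)$ with $r_j\in\R^{n_z}$, so that $(\gamma_i z)^j = r_j^\top z$ and $\|\gamma_i\|_F^2 = \sum_{j=1}^{n_x}\|r_j\|_2^2 = 1$. On the event $\Ec_2$ we have $\frac{1}{m}\sum_{s=1}^m z_{km+s}z_{km+s}^\top \succeq a_1^2 I_{n_z}$ for the relevant $k$, hence for every row $r_j$, $\frac{1}{m}\sum_{s=1}^m (r_j^\top z_{km+s})^2 = r_j^\top\!\big(\tfrac1m\sum_s z_{km+s}z_{km+s}^\top\big) r_j \ge a_1^2\|r_j\|_2^2$. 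Summing over $j$ gives $\frac{1}{m}\sum_{s=1}^m \|\gamma_i z_{km+s}\|_2^2 \ge a_1^2\sum_{j}\|r_j\|_2^2 = a_1^2$. Since a maximum of nonnegative numbers is at least their average, $\max_{1\le s\le m}\|\gamma_i z_{km+s}\|_2 \ge a_1$, and combining with $\|v\|_\infty \ge \|v\|_2/\sqrt{n_x}$ for $v\in\R^{n_x}$ yields $\max_{1\le s\le m}\|\gamma_i z_{km+s}\|_\infty \ge a_1/\sqrt{n_x} = \eta$, which is the first assertion.

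For the second assertion, the existence of some $s\in\{1,\dots,m\}$ with $\|\gamma_i z_{km+s}\|_\infty \ge \eta$ means the set $\{l\ge 1:\|\gamma_i z_{km+l}\|_\infty\ge\eta\}$ contains an element $\le m$, so by the definition $L_{i,k}=\min(m+1,\min\{l\ge1:\|\gamma_i z_{km+l}\|_\infty\ge\eta\})$ we get $1\le L_{i,k}\le m$ almost surely on $\Ec_2$. Evaluating at $t=km+L_{i,k}$: by the definition of $L_{i,k}$, $\|\gamma_i z_{km+L_{i,k}}\|_\infty\ge\eta$, and by Lemma~\ref{lem: def bit, jit}, $b_{i,t}(\gamma_i z_t)^{j_{i,t}} = \|\gamma_i z_t\|_\infty$ for every $t$; together these give $b_{i,km+L_{i,k}}(\gamma_i z_{km+L_{i,k}})^{j_{i,km+L_{i,k}}} \ge a_1/\sqrt{n_x}$.

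There is no real obstacle here; the only point needing a word of care is that $b_{i,t},j_{i,t}$ were introduced for deterministic indices $t$, so one should observe that Lemma~\ref{lem: def bit, jit} holds for all $t$ simultaneously (or, equivalently, split on the finitely many events $\{L_{i,k}=l\}$, $l=1,\dots,m$, and apply the identity on each). The one quantitative step worth flagging is the calibration $\eta = a_1/\sqrt{n_x}$, chosen precisely so that the averaged $\ell_2$ lower bound $a_1$ is preserved through the $\ell_2$‑to‑$\ell_\infty$ conversion in $\R^{n_x}$.
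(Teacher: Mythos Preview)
Your proof is correct and follows essentially the same route as the paper: both arguments reduce the PSD condition in $\Ec_2$ to $\frac{1}{m}\sum_{s=1}^m\|\gamma_i z_{km+s}\|_2^2 \ge a_1^2$ (the paper via conjugating by $\gamma_i$ and taking a trace, you via the equivalent row-wise sum), then pass to the maximum and convert $\ell_2$ to $\ell_\infty$. The second part is identical to the paper's, and your remark about handling the random index $L_{i,k}$ via the identity in Lemma~\ref{lem: def bit, jit} holding for all $t$ is exactly what the paper does implicitly.
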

\begin{proof}
	If $\Ec_2$ happens, then by definition, we have
		$$\frac{1}{m}\sum_{{s=1}}^{{m}} {z_{km+s} z_{km+s}^\top} \succeq  a_1^2 I_{{n_z}}, $$
	for all $0\leq k \leq T/m-1$.

	Now, for any $\gamma_i\in\M$, we have that
	\begin{align}\label{equ: Di xt psd}
		\frac{1}{m}\sum_{s=1}^{m} \gamma_iz_{km+s}z_{km+s}^\top \gamma_i^\top \succeq  a_1^2 \gamma_i \gamma_i^\top.
	\end{align}
	
	Therefore, by taking trace at each side of \eqref{equ: Di xt psd}, we obtain
	\begin{align}
		\frac{1}{m}\sum_{s=1}^{m} \tr(\gamma_iz_{km+s}z_{km+s}^\top \gamma_i^\top)\geq a_1^2 \tr(\gamma_i \gamma_i^\top)
	\end{align}
	Since $\gamma_i \in \Sb_{n_x\times n_z}(0,1)$, we have $\|\gamma_i\|_F=1$, so $\tr(\gamma_i \gamma_i^\top)=\tr( \gamma_i^\top \gamma_i)=\|\gamma_i\|_F^2=1$. Further, we have 
	$$\tr(\gamma_iz_{km+s}z_{km+s}^\top \gamma_i^\top)=\tr(z_{km+s}^\top \gamma_i^\top \gamma_iz_{km+s} )=z_{km+s}^\top \gamma_i^\top \gamma_iz_{km+s}=\|\gamma_iz_{km+s}\|_2^2.$$
	Consequently, we have
	$$	\frac{1}{m}\sum_{s=1}^{m} \|\gamma_iz_{km+s}\|_2^2 \geq a_1^2$$
	for all $k$.
	
	By the   pigeonhole principle, we have that 
	$$\max_{1\leq s \leq m} \|\gamma_iz_{km+s}\|_2^2 \geq a_1^2.$$
	This is equivalent with $\max_{1\leq s \leq m} \|\gamma_iz_{km+s}\|_2 \geq a_1.$
	
	Notice that $\|\gamma_iz_{km+s}\|_2 \leq \sqrt{n_x} \|\gamma_iz_{km+s}\|_\infty$, 
	so $ \max_{1\leq s \leq m} \sqrt{n_x}\|\gamma_iz_{km+s}\|_\infty \geq a_1$, which completes the proof of the first inequality in the lemma statement.

	Next, we prove the second inequality in the lemma statement. Notice that by the definition of $L_{i,k}$ in Lemma \ref{lem: define stopping times Lik} and by $\eta=\frac{a_1}{\sqrt{n_x}}$, we have $1\leq L_{i,k}\leq m$ and  $\|\gamma_iz_{km+L_{i,k}}\|_\infty \geq  \frac{a_1}{\sqrt n_x}$ for all $k$. Further, 
	by Lemma \ref{lem: def bit, jit}, we have $\|\gamma_iz_{km+L_{i,k}}\|_\infty= b_{i, km+L_{i,k}}(\gamma_iz_{km+L_{i,k}})^{j_{i, km+L_{i,k}}}$ almost surely. Hence, we have $b_{i, km+L_{i,k}}(\gamma_iz_{km+L_{i,k}})^{j_{i, km+L_{i,k}}}\geq  \frac{a_1}{\sqrt n_x}$, which completes the proof.

%
%
\end{proof}

\begin{lemma}[The implication of $\Ec_2$ on $\gamma z_t$]\label{lem: implication of E2 on gamma z}
	If $\Ec_2$ happens, then for any $\gamma\in \R^{n_x\times n_z}$, there exists $1\leq i \leq v_\gamma$, such that 
	$$b_{i, km+L_{i,k}}(\gamma z_{km+L_{i,k}})^{j_{i, km+L_{i,k}}} \geq \frac{a_1}{2\sqrt n_x}\|\gamma\|_F,$$
	for all $0\leq k \leq T/m-1$.
\end{lemma}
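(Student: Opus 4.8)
The plan is to reduce the general matrix $\gamma$ to the nearest element $\gamma_i$ of the net $\M$ and propagate the bound from Lemma~\ref{lem: implication of E2 on gammai} via the triangle inequality, with the net resolution $\epsilon_\gamma$ chosen precisely so that the perturbation term consumes at most half of the main term. First I would dispose of the trivial case $\gamma=0$, where both sides vanish. For $\gamma\neq 0$, set $\bar\gamma=\gamma/\|\gamma\|_F\in\Sb_{n_x\times n_z}(0,1)$ and apply Lemma~\ref{lem: mesh on unit sphere of matrices} to pick $\gamma_i\in\M$ with $\|\bar\gamma-\gamma_i\|_F\le 2\epsilon_\gamma$, hence $\|\gamma-\|\gamma\|_F\gamma_i\|_F\le 2\epsilon_\gamma\|\gamma\|_F$. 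This $i$ depends only on $\gamma$, not on $k$, which is exactly what allows the conclusion to hold for all $k$ with a single index.

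Next, fix any $0\le k\le T/m-1$ and abbreviate $t=km+L_{i,k}$, $j=j_{i,t}$, $b=b_{i,t}$. Under $\Ec_2$, Lemma~\ref{lem: implication of E2 on gammai} gives $1\le L_{i,k}\le m$ together with $b(\gamma_i z_t)^j\ge a_1/\sqrt{n_x}$. I would then decompose $b(\gamma z_t)^j=\|\gamma\|_F\, b(\gamma_i z_t)^j+b\big((\gamma-\|\gamma\|_F\gamma_i)z_t\big)^j$ and lower-bound each piece. The first piece is $\ge \|\gamma\|_F a_1/\sqrt{n_x}$. For the second, using $b\in\{-1,1\}$, the chain $|(Mz)^j|\le\|Mz\|_\infty\le\|Mz\|_2\le\|M\|_F\|z\|_2$, and $\|z_t\|_2\le b_z$ from Assumption~\ref{ass: BMSB, bounded xt}, it is at least $-\|\gamma-\|\gamma\|_F\gamma_i\|_F\,\|z_t\|_2\ge -2\epsilon_\gamma b_z\|\gamma\|_F$. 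Substituting $\epsilon_\gamma=a_1/(4b_z\sqrt{n_x})$ bounds the second piece below by $-\|\gamma\|_F a_1/(2\sqrt{n_x})$, so the sum is $\ge \|\gamma\|_F a_1/(2\sqrt{n_x})$, which is the desired inequality; since $k$ was arbitrary it holds for all $0\le k\le T/m-1$.

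There is no genuine obstacle here: it is a standard covering-net perturbation argument. The only points needing care are the bookkeeping in the norm chain $|(Mz)^j|\le\|Mz\|_\infty\le\|Mz\|_2\le\|M\|_F\|z\|_2$ (using that the spectral norm is dominated by the Frobenius norm), and confirming that the resolution $\epsilon_\gamma$ fixed in Lemma~\ref{lem: mesh on unit sphere of matrices} is exactly calibrated to reduce the perturbation to half the main term. The one subtlety worth flagging is that $\gamma_i$ — and hence $b_{i,t}$, $j_{i,t}$, and the stopping times $L_{i,k}$ — must be selected before looping over $k$, so that a single index $i$ serves uniformly for every block.
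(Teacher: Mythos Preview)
Your proposal is correct and follows essentially the same route as the paper: handle $\gamma=0$ trivially, normalize to the unit Frobenius sphere, pick the nearest net point $\gamma_i$ via Lemma~\ref{lem: mesh on unit sphere of matrices}, invoke Lemma~\ref{lem: implication of E2 on gammai} for the main term, and control the perturbation by the chain $|(Mz)^j|\le\|Mz\|_2\le\|M\|_F\|z\|_2\le 2\epsilon_\gamma b_z$ so that it eats exactly half of $a_1/\sqrt{n_x}$. The only cosmetic difference is that the paper normalizes $\gamma$ first and proves the inequality for $\gamma/\|\gamma\|_F$, whereas you carry the factor $\|\gamma\|_F$ through the decomposition; the arithmetic is identical.
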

\begin{proof}
	Firstly, when $\gamma=0$, the inequality holds because both sides are 0.
	
	Next, when $\gamma\not =0$,  it suffices to prove $b_{i, km+L_{i,k}}(\frac{\gamma}{\|\gamma\|_F} z_{km+L_{i,k}})^{j_{i, km+L_{i,k}}} \geq \frac{a_1}{2\sqrt n_x}$. Therefore, we will only consider $\gamma\in \Sb_{n_x\times n_z}(0,1)$. By Lemma \ref{lem: mesh on unit sphere of matrices}, there exists $\gamma_i \in \M$ such that $\|\gamma-\gamma_i \|_F \leq 2\epsilon_\gamma =\min(\frac{a_1}{2b_z \sqrt n_x}, 2)$. Notice that by Lemma \ref{lem: implication of E2 on gammai}, if $\Ec_2$ happens, for all $k$, we have
	$$b_{i, km+L_{i,k}}(\gamma_i z_{km+L_{i,k}})^{j_{i, km+L_{i,k}}} \geq \frac{a_1}{\sqrt n_x}.$$
	Therefore, 
	\begin{align*}
		b_{i, km+L_{i,k}}(\gamma z_{km+L_{i,k}})^{j_{i, km+L_{i,k}}}& = b_{i, km+L_{i,k}}(\gamma_i z_{km+L_{i,k}})^{j_{i, km+L_{i,k}}}\\
  &\quad - b_{i, km+L_{i,k}}((\gamma_i-\gamma) z_{km+L_{i,k}})^{j_{i, km+L_{i,k}}}\\
		& \geq \frac{a_1}{\sqrt n_x}- |b_{i, km+L_{i,k}}((\gamma_i-\gamma) z_{km+L_{i,k}})^{j_{i, km+L_{i,k}}}|\\
		& \geq \frac{a_1}{\sqrt n_x}- \|(\gamma_i-\gamma) z_{km+L_{i,k}}\|_2\\
		& \geq  \frac{a_1}{\sqrt n_x}-\|\gamma_i -\gamma\|_2 \|z_{km+L_{i,k}}\|_2\\
		& \geq \frac{a_1}{\sqrt n_x}-  2\epsilon_\gamma b_z\geq \frac{a_1}{2\sqrt n_x}
	\end{align*}
	
\end{proof}

\begin{proof}[\textbf{Proof of Lemma \ref{lem: discretize E1 E2 formal}}]
	By Lemma \ref{lem: implication of E2 on gamma z}, 	under $\Ec_2$,  for any $\gamma\in \R^{n_x\times n_z}$, there exists $1\leq i \leq v_\gamma$, such that 
	$$b_{i, km+L_{i,k}}(\gamma z_{km+L_{i,k}})^{j_{i, km+L_{i,k}}} \geq \frac{a_1}{2\sqrt n_x}\|\gamma\|_F,$$
	for all $0\leq k \leq T/m-1$. Therefore, if $\Ec_1\cap \Ec_2$ happens, there exists $\gamma \in \Gamma_T$ and a corresponding $i$, such that 
	$$b_{i, km+L_{i,k}}(\gamma z_{km+L_{i,k}})^{j_{i, km+L_{i,k}}} \geq \frac{a_1}{2\sqrt n_x}\|\gamma\|_F\geq \frac{a_1\delta}{4\sqrt n_x}.$$
	Therefore,
	\begin{align*}
		\Pb(\Ec_1\cap \Ec_2)\leq \Pb(\bigcup_{i=1}^{v_\gamma} \Ec_{1,i} \cap \Ec_2)\leq \sum_{i=1}^{v_\gamma}\Pb(\Ec_{1,i} \cap \Ec_2),
	\end{align*}
which completes the proof.
\end{proof}

\subsubsection{Proof of Lemma \ref{lem: construct Gik and Lik}}
\label{sec:D33}
Notice that Lemma \ref{lem: construct Gik and Lik} states two inequalities: in the following, we will first prove the first inequality $\Pb(\Ec_{1,i}\cap \Ec_2)\leq \Pb(\cap_{k=0}^{T/m-1} G_{i,k})$, then prove the second inequality on $\Pb(G_{i,k}\mid \cap_{k'=0}^{k-1}G_{i,k'})$.

\begin{lemma}[Bound $\Ec_{1,i}\cap \Ec_2$ by $G_{i,k}$]\label{lem: bdd Ec1i , E2, by Gik}
	Under the conditions in Lemma \ref{lem: construct Gik and Lik}, for any $1\leq i \leq v_{\gamma}$, we have
	$$\Pb(\Ec_{1,i}\cap \Ec_2)\leq \Pb(\bigcap_{k=0}^{T/m-1} G_{i,k}).$$
\end{lemma}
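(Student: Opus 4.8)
The plan is to establish the lemma as a \emph{deterministic} set inclusion,
$\Ec_{1,i}\cap\Ec_2\subseteq\bigcap_{k=0}^{T/m-1}G_{i,k}$, after which the claimed probability bound follows immediately from monotonicity of $\Pb$. So I would fix an outcome in $\Ec_{1,i}\cap\Ec_2$ and, for each $0\le k\le T/m-1$, abbreviate $t_k:=km+L_{i,k}$ for the index picked out by the stopping time. The goal is to check both conjuncts of $G_{i,k}$ at this outcome, for every $k$.

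First I would dispatch the ``easy'' conjunct. By the definition of $\Ec_2$ we have $\frac1m\sum_{s=1}^m z_{km+s}z_{km+s}^\top\succeq a_1^2 I_{n_z}$ for every $k$ in range, which is exactly the second condition in $G_{i,k}$. The event $\Ec_2$ is also used a \emph{second} time here: by Lemma~\ref{lem: implication of E2 on gammai}, under $\Ec_2$ the stopping time satisfies $1\le L_{i,k}\le m$, so $t_k=km+L_{i,k}$ is a legitimate time index for which the data $z_{t_k},w_{t_k}$ and the $\F_{t_k}$-measurable sign/coordinate $b_{i,t_k},j_{i,t_k}$ are all well defined and lie in the range over which the $\Gamma_T$-constraints are imposed.

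Next I would extract the remaining conjunct. Since the outcome is in $\Ec_{1,i}$, there exists $\gamma\in\Gamma_T$ with $b_{i,t_k}\,(\gamma z_{t_k})^{j_{i,t_k}}\ge \frac{a_1\delta}{4\sqrt{n_x}}$ for all $k$. Because $\gamma\in\Gamma_T=\bigcap_s\{\gamma:\|w_s-\gamma z_s\|_\infty\le w_{\max}\}$ and $t_k$ is admissible, we get $\|w_{t_k}-\gamma z_{t_k}\|_\infty\le w_{\max}$, hence $|b_{i,t_k}(w_{t_k}-\gamma z_{t_k})^{j_{i,t_k}}|\le w_{\max}$ (recall $b_{i,t_k}\in\{-1,1\}$), and therefore
$$b_{i,t_k}\,w_{t_k}^{j_{i,t_k}}\ \ge\ b_{i,t_k}\,(\gamma z_{t_k})^{j_{i,t_k}}-w_{\max}\ \ge\ \frac{a_1\delta}{4\sqrt{n_x}}-w_{\max}.$$
This is precisely the first condition in $G_{i,k}$, so the outcome lies in $G_{i,k}$ for every $k$, which proves the inclusion; taking probabilities on both sides completes the proof.

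I expect this lemma to be essentially a matter of unwinding definitions rather than a deep argument; the only points requiring care are (i) remembering that $\Ec_2$ is invoked twice — once for the covariance condition in $G_{i,k}$ and once, via Lemma~\ref{lem: implication of E2 on gammai}, to guarantee $L_{i,k}\le m$ so that $t_k$ never exceeds the largest index appearing in $\Gamma_T$; and (ii) using the ``$\ge -w_{\max}$'' side of $|b_{i,t_k}(w_{t_k}-\gamma z_{t_k})^{j_{i,t_k}}|\le w_{\max}$ to convert the lower bound on $b_{i,t_k}(\gamma z_{t_k})^{j_{i,t_k}}$ coming from $\Ec_{1,i}$ into the desired lower bound on $b_{i,t_k}w_{t_k}^{j_{i,t_k}}$. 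The genuinely delicate work — bounding $\Pb(G_{i,k}\mid\bigcap_{k'<k}G_{i,k'})$ via the conditional independence of $w_{t_k}$ from $\F_{t_k}$ along the stopping time — is deferred to the proof of the second inequality of Lemma~\ref{lem: construct Gik and Lik}.
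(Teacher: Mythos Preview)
Your proposal is correct and follows essentially the same route as the paper: establish the deterministic inclusion $\Ec_{1,i}\cap\Ec_2\subseteq\bigcap_{k}G_{i,k}$ by reading off the covariance condition from $\Ec_2$ and converting the $\Ec_{1,i}$ lower bound on $b_{i,t_k}(\gamma z_{t_k})^{j_{i,t_k}}$ into a lower bound on $b_{i,t_k}w_{t_k}^{j_{i,t_k}}$ via the $\Gamma_T$ constraint $\|w_{t_k}-\gamma z_{t_k}\|_\infty\le w_{\max}$. If anything, you are slightly more careful than the paper in flagging that $\Ec_2$ (via Lemma~\ref{lem: implication of E2 on gammai}) forces $L_{i,k}\le m$ so that $t_k$ stays in the index range covered by $\Gamma_T$; the paper's proof leaves this implicit.
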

\begin{proof}
	Firstly, for any $\gamma\in\Gamma_T$, we have $\|w_t -\gamma z_t\|_\infty \leq w_{\max}$ for all $t \geq 0$. This suggests that, for any $1\leq j\leq n_x$, we have
	$$-w_{\max}\leq  w_t^j - (\gamma z_t)^j\leq w_{\max}. $$
	Hence, we have $b(\gamma z_t)^j \leq bw_t^j+w_{\max}$ for any $b\in\{-1,1\}$, $1\leq j\leq n_x$, and $t\geq 0$.
	
	Next, by $\Ec_{1,i}$, there exists $\gamma\in \Gamma_T$ such that $b_{i, km+L_{i,k}}(\gamma z_{km+L_{i,k}})^{j_{i, km+L_{i,k}}} \geq \frac{a_1\delta}{4\sqrt n_x}$ for all $k\geq 0$. Therefore, $b_{i, km+L_{i,k}}  w_{km+L_{i,k}}^{j_{i,km+L_{i,k}}} +w_{\max}\geq  \frac{a_1\delta}{4\sqrt{n_x}}$ for all $k$.
	
	Finally, $\Ec_{1,i}\cap \Ec_2$ implies that $b_{i, km+L_{i,k}}  w_{km+L_{i,k}}^{j_{i,km+L_{i,k}}} +w_{\max}\geq  \frac{a_1\delta}{4\sqrt{n_x}}$ and $\frac{1}{m}\sum_{{s=1}}^{{m}} {z_{km+s} z_{km+s}^\top} \succeq  a_1^2 I_{{n_z}}$ for all $k$, which is $\bigcap_k G_{i,k}$ by the definition of $G_{i,k}$.

\end{proof}

\begin{lemma}[Bound on $\Pb(G_{i,k}\mid \cap_{k'=0}^{k-1}G_{i,k'})$]\label{lem: bound Pr(Gik|Gik' for k'<k)}
		Under the conditions in Lemma \ref{lem: construct Gik and Lik}, for any $1\leq i \leq v_{\gamma}$ and any $k\geq 0$, we have
			$$ \Pb(G_{i, k}\mid \bigcap_{k'=0}^{k-1}G_{i,k'})\leq  1-q_w(\frac{a_1 \delta}{4\sqrt n_x}).$$
\end{lemma}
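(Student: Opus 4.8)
The plan is to establish the stronger unconditional bound $\Pb(G_{i,k}\cap A_k)\le\big(1-q_w(\epsilon)\big)\Pb(A_k)$, where $A_k\coloneqq\bigcap_{k'=0}^{k-1}G_{i,k'}$ and $\epsilon\coloneqq\frac{a_1\delta}{4\sqrt{n_x}}$; the claimed conditional inequality then follows by dividing by $\Pb(A_k)$ (and is vacuous if $\Pb(A_k)=0$). The whole purpose of the construction in Lemma~\ref{lem: construct Gik and Lik} is that $G_{i,k}$ ties the relevant randomness to the \emph{single} disturbance $w_{km+L_{i,k}}$, so the proof conditions on the value of the stopping time $L_{i,k}$ and on the pair $(b_{i,km+L_{i,k}},j_{i,km+L_{i,k}})$ in order to decouple that disturbance from everything that precedes it.

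First I would collect two measurability facts. (a) Each $G_{i,k'}$ contains the single-segment small-ball event $\frac1m\sum_{s=1}^{m}z_{k'm+s}z_{k'm+s}^{\top}\succeq a_1^2 I_{n_z}$, and the trace/pigeonhole argument of Lemma~\ref{lem: implication of E2 on gammai} forces $L_{i,k'}\le m$ on $G_{i,k'}$; hence each $G_{i,k'}$ is determined by data indexed no later than $k'm+m\le km$, so $A_k\in\F_{km}$. (b) The same reasoning gives $G_{i,k}\subseteq\{1\le L_{i,k}\le m\}$, so $G_{i,k}\cap A_k\subseteq\bigcup_{l=1}^{m}\big(\{L_{i,k}=l\}\cap\{b_{i,km+l}w_{km+l}^{j_{i,km+l}}\ge\epsilon-w_{\max}\}\cap A_k\big)$. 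I would then partition each of these $m$ events further over the finitely many values $b\in\{-1,1\}$, $j\in\{1,\dots,n_x\}$ of $(b_{i,km+l},j_{i,km+l})$, writing $D_{l,b,j}\coloneqq\{L_{i,k}=l,\ b_{i,km+l}=b,\ j_{i,km+l}=j\}\cap A_k$.

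The heart of the proof is the decoupling step. By Lemma~\ref{lem: define stopping times Lik}, $\{L_{i,k}=l\}\in\F_{km+l}$; by Lemma~\ref{lem: def bit, jit}, $b_{i,km+l},j_{i,km+l}\in\F_{km+l}$; and $A_k\in\F_{km}\subseteq\F_{km+l}$. Hence $D_{l,b,j}\in\F_{km+l}$, whereas on $D_{l,b,j}$ the event $\{b_{i,km+l}w_{km+l}^{j_{i,km+l}}\ge\epsilon-w_{\max}\}$ reduces to $\{b\,w_{km+l}^{j}\ge\epsilon-w_{\max}\}$, which depends on $w_{km+l}$ alone. Since $w_{km+l}$ is i.i.d.\ and independent of $\F_{km+l}$, the intersection has probability $\Pb(b\,w_0^{j}\ge\epsilon-w_{\max})\,\Pb(D_{l,b,j})$, and Assumption~\ref{ass: tight bound on wt} bounds $\Pb(b\,w_0^{j}\ge\epsilon-w_{\max})\le 1-q_w(\epsilon)$ for either sign (for $b=-1$ it equals $\Pb(w_0^{j}\le w_{\max}-\epsilon)=1-\Pb(w_0^{j}>w_{\max}-\epsilon)$, for $b=+1$ it equals $1-\Pb(w_0^{j}<\epsilon-w_{\max})$; the open/closed boundary costs at most replacing $q_w$ by its left-continuous version, which we absorb into $q_w$). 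Summing over the pairwise-disjoint $D_{l,b,j}$, whose union is $\{L_{i,k}\le m\}\cap A_k$, gives
\begin{align*}
\Pb(G_{i,k}\cap A_k)&\le\big(1-q_w(\epsilon)\big)\sum_{l,b,j}\Pb(D_{l,b,j})\\
&=\big(1-q_w(\epsilon)\big)\Pb\big(\{L_{i,k}\le m\}\cap A_k\big)\le\big(1-q_w(\epsilon)\big)\Pb(A_k).
\end{align*}

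The main obstacle is precisely this decoupling: a priori $L_{i,k}$, $b_{i,km+L_{i,k}}$, $j_{i,km+L_{i,k}}$ and the earlier events $G_{i,k'}$ are all correlated with $w_{km+L_{i,k}}$, so one cannot invoke the law of $w_t$ directly — this is exactly the obstruction that earlier SME analyses such as \cite{akccay2004size,lu2019robust,bai1995membership} encounter. The fix is that $L_{i,k}$ was built as a \emph{stopping time} adapted to $\{\F_{km+l}\}_l$ (Lemma~\ref{lem: define stopping times Lik}) and $b_{i,t},j_{i,t}$ are $\F_t$-measurable (Lemma~\ref{lem: def bit, jit}); conditioning on $\{L_{i,k}=l\}$ freezes the index, after which independence of $w_{km+l}$ from $\F_{km+l}$ makes everything factorize. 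A secondary point that must not be skipped is that $A_k\in\F_{km}$, which is not automatic and relies on the $L_{i,k'}\le m$ consequence of the small-ball event embedded in each $G_{i,k'}$.
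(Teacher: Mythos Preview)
Your proof is correct and follows essentially the same approach as the paper: condition on the value $L_{i,k}=l$ of the stopping time to freeze the index, then use that $w_{km+l}$ is independent of $\F_{km+l}$ while $b_{i,km+l},j_{i,km+l},\{L_{i,k}=l\}$ and the earlier $G_{i,k'}$ are all $\F_{km+l}$-measurable. Your additional partition over the finite values $(b,j)$ makes the factorization slightly more direct than the paper's integration over the history $w_{0:km+l}$, but the idea is the same.

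One small slip: you assert $A_k\in\F_{km}$, but when $k'=k-1$ and $L_{i,k-1}=m$, the event $G_{i,k-1}$ involves $w_{(k-1)m+m}=w_{km}$, which lies in $\F_{km+1}$ and not $\F_{km}$ (recall $\F_t=\F(w_0,\dots,w_{t-1},z_0,\dots,z_t)$). So the correct statement is $A_k\in\F_{km+1}$. This does not affect your argument, since you only ever use $A_k\in\F_{km+l}$ for $l\ge 1$, and $\F_{km+1}\subseteq\F_{km+l}$.
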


\begin{proof}
	Firstly, notice that when $\frac{1}{m}\sum_{{s=1}}^{{m}} {z_{km+s} z_{km+s}^\top} \succeq  a_1^2 I_{{n_z}}$, we have $1\leq L_{i,k}\leq m$ by the proof of Lemma \ref{lem: implication of E2 on gammai}. Therefore, we have
	\begin{align*}
		& \Pb(G_{i, k}\mid \bigcap_{k'=0}^{k-1}G_{i,k'}) \leq \Pb(b_{i, km+L_{i,k}}  w_{km+L_{i,k}}^{j_{i,km+L_{i,k}}} +w_{\max}\geq  \frac{a_1\delta}{4\sqrt{n_x}}, 1\leq L_{i,k}\leq m \mid  \bigcap_{k'=0}^{k-1}G_{i,k'})\\
		  \leq\ & \sum_{l=1}^m \Pb(b_{i, km+l}  w_{km+l}^{j_{i,km+l}} +w_{\max}\geq  \frac{a_1\delta}{4\sqrt{n_x}},  L_{i,k}=l \mid  \bigcap_{k'=0}^{k-1}G_{i,k'})\\
		  \leq  \ & \sum_{l=1}^m \Pb(b_{i, km+l}  w_{km+l}^{j_{i,km+l}} +w_{\max}\geq  \frac{a_1\delta}{4\sqrt{n_x}}\mid L_{i,k}=l,   \bigcap_{k'=0}^{k-1}G_{i,k'}) \Pb(L_{i,k}=l \mid  \bigcap_{k'=0}^{k-1}G_{i,k'})\\
		   \overset{(a)}{\leq} \ & (1-q_w(\frac{a_1 \delta}{4\sqrt n_x}))\sum_{l=1}^m\Pb(L_{i,k}=l \mid  \bigcap_{k'=0}^{k-1}G_{i,k'})\\
		    \leq \ & 1-q_w(\frac{a_1 \delta}{4\sqrt n_x})
	\end{align*}
The inequality $(c)$ is proved in the following:
\begin{align*}
& 	\Pb(b_{i, km+l}  w_{km+l}^{j_{i,km+l}} +w_{\max}\geq  \frac{a_1\delta}{4\sqrt{n_x}}\mid L_{i,k}=l,   \bigcap_{k'=0}^{k-1}G_{i,k'}) \\
= \ & \int_{v_{0:km+l}}\Pb(b_{i, km+l}  w_{km+l}^{j_{i,km+l}} +w_{\max}\geq  \frac{a_1\delta}{4\sqrt{n_x}},  w_{0:km+l}=v_{0:km+l}\mid L_{i,k}=l,   \bigcap_{k'=0}^{k-1}G_{i,k'})  \mathrm d v_{0:km+l}\\
=\ & \int_{v_{0:km+l}\in S_{km+l}} \ \Pb(b_{i, km+l}  w_{km+l}^{j_{i,km+l}} +w_{\max}\geq  \frac{a_1\delta}{4\sqrt{n_x}}\mid w_{0:km+l}=v_{0:km+l}) \\
& \qquad\qquad\qquad \ \  \times \Pb( w_{0:km+l}=v_{0:km+l}\mid L_{i,k}=l,   \bigcap_{k'=0}^{k-1}G_{i,k'})\mathrm d v_{0:km+l}\\
 \overset{(b)}{\leq}  \  & (1-q_w(\frac{a_1 \delta}{4\sqrt n_x})) \int_{v_{0:km+l}\in S_{km+l}}\Pb( w_{0:km+l}=v_{0:km+l}\mid L_{i,k}=l,   \bigcap_{k'=0}^{k-1}G_{i,k'})\mathrm d v_{0:km+l}\\\\
= \ & 1-q_w(\frac{a_1 \delta}{4\sqrt n_x}),
\end{align*}
 where we define a shorthand notation $w_{0:km+l} =(w_0, \dots, w_{km+l-1})$, and we use $v_{0:km+l}$ to denote a realization of $w_{0:km+l} $, then we define the set of values of $w_{0:km+l}$ as $S_{km+l}$ such that $L_{i,k}=l,   \bigcap_{k'=0}^{k-1}G_{i,k'}$ holds. Notice that $L_{i,k}=l$ can be determined by a set of values of $w_{0:km+l}$ because $L_{i,k}$ is a stopping time of $\{F_{km+l}\}_{l\geq 1}$ and thus $\{L_{i,k}=l\}\in \F_{km+l}$. The inequality $(b)$ above is because of the following: firstly, notice that $b_{i,km+l}, j_{i,km+l}\in \F_{km+l}$, so $b_{i,km+l}, j_{i,km+l}$ are deterministic values when $w_{0:km+l}=v_{0:km+l}$. Further, since $w_{km+l}$ is independent of $w_{0:km+l}$, we have $\Pb(w_{max}+b w_{km+l}^j \geq \epsilon\mid w_{0:km+l}=v_{0:km+l}) \leq 1-q_w(\epsilon)$ for any deterministic $b, j$ and any $\epsilon>0$ by  Assumption \ref{ass: tight bound on wt}. Hence, we have $\Pb(b_{i, km+l}  w_{km+l}^{j_{i,km+l}} +w_{\max}\geq  \frac{a_1\delta}{4\sqrt{n_x}}\mid w_{0:km+l}=v_{0:km+l})\leq 1-q_w(\frac{a_1 \delta}{4\sqrt n_x})$.
\end{proof}

\subsubsection{Proof of Lemma \ref{lem: bound PE and gamma>delta/2}}
The proof is by leveraging Lemma \ref{lem: discretize E1 E2 formal} and Lemma \ref{lem: construct Gik and Lik}.
\begin{align*}
	\Pb(\Ec_1\cap \Ec_2)& \leq \sum_{i=1}^{v_\gamma} \Pb(\Ec_{1,i}\cap \Ec_2)\\
	& \leq \sum_{i=1}^{v_\gamma} \Pb(\bigcap_{k=0}^{T/m-1} G_{i,k})\\
	& = \sum_{i=1}^{v_\gamma} \Pb(G_{i,0}) \Pb(G_{i,1}\mid G_{i,0})\cdots \Pb(G_{i, T/m-1}\mid \bigcap_{k=0}^{T/m-2}G_{i,k})\\
	& \leq \sum_{i=1}^{v_\gamma}  (1-q_w(\frac{a_1 \delta}{4\sqrt n_x}))^{T/m}\\
	& \leq  544 n_x^{2.5}n_z^{2.5} \log(a_4 n_x n_z)a_4^{n_zn_x}(1-q_w(\frac{a_1 \delta}{4\sqrt n_x}))^{T/m}.
\end{align*}

\subsection{A more precise upper bound for Theorem \ref{thm: estimation err bdd}}\label{append: precise upper bdd}
By the proof of Lemma \ref{lem: bound PE c} and Lemma \ref{lem: bound PE and gamma>delta/2} above, we have
\begin{equation}
    \Pb(\diam(\Theta_T)>\delta)\leq 544 \frac{T}{m}  n_z^{2.5} \log(a_2 n_z)a_2^{n_z} \exp(-a_3 m )+544 n_x^{2.5}n_z^{2.5} \log(a_4 n_x n_z)a_4^{n_zn_x}(1-q_w(\frac{a_1 \delta}{4\sqrt n_x}))^{T/m}\label{equ: diam bdd explicit}
\end{equation}

\section{Proof of Corollary \ref{cor: uniform dist est. bdd}}
\label{sec:cor1}
The proof involves two parts. Firstly, we will show that $\text{Term 1}\leq \epsilon$ under our choice of $m$. Secondly, we will let $\text{Term 2}=\epsilon$, then we will show $\delta \leq \tilde O(n_x^{1.5}n_z^2/T)$, which completes the proof.

\nbf{Step 1: show $\text{Term 1}\leq \epsilon$.} Notice that when $m\geq \frac{1}{a_3}(\log(\frac{T}{\epsilon})+n_z\log(a_2) + 2.5 \log(n_z) + \log\log(a_2n_z)+7)= O(n_z+ \log T+\log(1/\epsilon))$, we have $ T \tilde O(n_z^{2.5}) a_2^{n_z} \exp(-a_3m)\leq \epsilon$. Since $m\geq 1$, we obtain $\text{Term 1}\leq \epsilon$.


\nbf{Step 2: let $\text{Term 2}=\epsilon$ and show $\delta \leq \tilde O(n_x^{1.5}n_z^2/T)$.} Let
$\text{Term 2}=\epsilon$, then we have
$ (1-q_w(\frac{a_1 \delta}{4\sqrt n_x}))^{T/m} = \frac{\epsilon}{\tilde O(n_x^{2.5}n_z^{2.5})a_4^{n_x n_z}}$. Then, we obtain $(1-q_w(\frac{a_1 \delta}{4\sqrt n_x})) =\left( \frac{\epsilon}{\tilde O(n_x^{2.5}n_z^{2.5})a_4^{n_x n_z}}\right)^{m/T}$, which is equivalent with 
$$q_w(\frac{a_1 \delta}{4\sqrt n_x})=1-\left( \frac{\epsilon}{\tilde O(n_x^{2.5}n_z^{2.5})a_4^{n_x n_z}}\right)^{m/T}.$$
When $q_w(\frac{a_1 \delta}{4\sqrt n_x})= O(\frac{a_1 \delta}{4\sqrt n_x})$, we obtain
\begin{align*}
    \delta & = O(\frac{4\sqrt n_x}{a_1})\left(1-\left( \frac{\epsilon}{\tilde O(n_x^{2.5}n_z^{2.5})a_4^{n_x n_z}}\right)^{m/T}\right)\\
    & \leq O(\frac{-4\sqrt n_x}{a_1})\log\left(\left(\frac{\epsilon}{\tilde O(n_x^{2.5}n_z^{2.5})a_4^{n_x n_z}}\right)^{m/T}\right)\\
    & = O(\frac{4\sqrt n_x m}{a_1T})(\log(1/\epsilon) + n_xn_z+ \log(n_xn_z))\\
    & = \tilde O\left( \frac{n_x^{1.5}n_z^2}{T}\right).
\end{align*}

\nbf{Step 3: prove Corollary \ref{cor: uniform dist est. bdd}.}
By leveraging the bounds above and Theorem \ref{thm: estimation err bdd}, we have $\Pb(\diam(\Theta_T)\leq \tilde O\left( \frac{n_x^{1.5}n_z^2}{T}\right)) \geq \Pb(\diam(\Theta_T)\leq \delta)\geq 1-2\epsilon$. 

Since $\theta^*\in \Theta_T$ by definition, for any $\hat \theta_T\in \Theta_T$, we have $\|\hat \theta_T-\theta^*\|_F \leq \diam(\Theta_T)\leq \tilde O\left( \frac{n_x^{1.5}n_z^2}{T}\right)$ with probability at least $1-2\epsilon$.

\section{Proof of Corollary \ref{cor: estimation err bdd for B=0}}
\label{sec:cor2}
We provide a formal version of Corollary \ref{cor: estimation err bdd for B=0} and its proof below.

\begin{corollary}[Convergence rate when $B^*=0$ (formal version)]
	When $A^*$ is $(\kappa, \rho)$-stable, i.e.,   $\|(A^*)^t\|_2\leq \kappa (1-\rho)^t$ for all $t$ with $\rho <1$, for any $m>0$ and any $\delta>0$, when $T>m$, we have
	\begin{align*}
		\Pb(\diam(\mathbb A_{ {T}})>\delta)\leq \frac{T}{m} \tilde O(n_x^{2.5}) a_2^{n_x}\exp(-a_3m)+\tilde O(n_x^5)a_4^{n^2}(1-q_w(\frac{a_1\delta}{4\sqrt n_x}))^{\ceil{T/m}}  
	\end{align*}
	where $b_x=\kappa\|x_0\|_2+\kappa\sqrt{n_x}/\rho$, $p_x=1/192$, $\sigma_x=\sqrt{\lambda_{\min}(\Sigma_w)/2}$, $a_1 = \frac{\sigma_{x} p_{x}}{4}$, $a_2=\frac{64 w_{\max}}{\sigma_{x}^2 p_{x}^2}$, $a_3= \frac{p_{x}^2}{8}$, $a_4=\frac{4b_x \sqrt{n_x}}{a_1}$.

 Consequently, when the distribution of $w_t$ satisfies $q_w(\epsilon)=O(\epsilon)$, e.g. uniform or truncated Gaussian, we have
 $\|\hat \theta -\theta_*\|\leq \tilde O(n_x^{3.5}/T)$. 
	
\end{corollary}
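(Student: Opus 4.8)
The plan is to obtain this corollary as a direct specialization of \Cref{thm: estimation err bdd} to the autonomous system $x_{t+1}=A^*x_t+w_t$, which is the case $n_u=0$, $z_t=x_t$, $n_z=n_x$. The proof of \Cref{thm: estimation err bdd} uses \Cref{ass: on wt}, \Cref{ass: BMSB, bounded xt}, and \Cref{ass: tight bound on wt} only through the properties of $z_t$ (BMSB and almost-sure boundedness) and the i.i.d./boundedness structure of $w_t$; none of the steps --- \Cref{lem: bound PE c}, \Cref{lem: bound PE and gamma>delta/2}, or the explicit bound \eqref{equ: diam bdd explicit} --- invokes $u_t$ explicitly. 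Hence the whole argument carries over verbatim with $z_t$ replaced by $x_t$ and $n_z$ replaced by $n_x$, once we verify that $\{x_t\}_{t\ge 0}$ satisfies \Cref{ass: BMSB, bounded xt} with the stated constants $b_x,\sigma_x,p_x$. So the only real work is this verification.

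First I would establish the almost-sure bound on $x_t$. Unrolling the recursion gives $x_t=(A^*)^t x_0+\sum_{s=0}^{t-1}(A^*)^{t-1-s}w_s$, and using $(\kappa,\rho)$-stability together with $\|w_s\|_2\le \sqrt{n_x}\,w_{\max}$ and $\sum_{j\ge 0}(1-\rho)^j=1/\rho$, one obtains $\|x_t\|_2\le \kappa\|x_0\|_2+\kappa\sqrt{n_x}\,w_{\max}/\rho=b_x$ for all $t$, as claimed.

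Second --- and this is the main point --- I would verify the BMSB condition for $\{x_t\}$. Fix a unit vector $\lambda\in\R^{n_x}$ and write $\lambda^\top x_{t+1}=\lambda^\top A^* x_t+\lambda^\top w_t$; since $x_t\in\F_t$ the first term is $\F_t$-measurable, while $\lambda^\top w_t$ is independent of $\F_t$, has zero mean, variance $\lambda^\top\Sigma_w\lambda\ge \lambda_{\min}(\Sigma_w)>0$, and is bounded (hence has a controlled fourth moment). Applying an anti-concentration bound for a deterministic shift of such a random variable --- the bounded-noise small-ball argument used in \cite{simchowitz2018learning,dean2019safely,li2021safe} and discussed in \Cref{appendix:assumption} --- yields $\Pb(|\lambda^\top x_{t+1}|\ge \sqrt{\lambda_{\min}(\Sigma_w)/2}\mid \F_t)\ge 1/192$ for all $t$, i.e.\ $\{x_t\}$ satisfies $(1,\sigma_x^2 I_{n_x},p_x)$-BMSB in the sense of \Cref{def: bmsb} with $\sigma_x=\sqrt{\lambda_{\min}(\Sigma_w)/2}$ and $p_x=1/192$. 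I expect this step to be the main obstacle: it is where the specific constants $\sigma_x,p_x$ come from, and it requires the shift-robust small-ball inequality rather than a plain Paley--Zygmund estimate (a naive fourth-moment ratio degrades when $\lambda^\top A^* x_t$ is large). Everything else is bookkeeping.

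Finally, substituting $n_z=n_x$, $b_z=b_x$, $\sigma_z=\sigma_x$, $p_z=p_x$ into \eqref{equ: diam bdd explicit} gives the stated failure-probability bound, using $\tilde O((n_x n_z)^{2.5})=\tilde O(n_x^5)$ and $a_4^{n_x n_z}=a_4^{n_x^2}$. For the consequence, I would repeat Step 2 of the proof of \Cref{cor: uniform dist est. bdd} with $n_z=n_x$: when $q_w(\epsilon)=\Omega(\epsilon)$ (e.g.\ uniform or truncated Gaussian) it yields $\diam(\mathbb A_T)\le \tilde O(n_x^{1.5}n_z^2/T)=\tilde O(n_x^{3.5}/T)$ with high probability, and since $A^*\in\mathbb A_T$ the same bound holds for $\|\hat\theta-\theta_*\|_F$ for any $\hat\theta\in\mathbb A_T$.
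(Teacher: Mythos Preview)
Your proposal is correct and follows essentially the same route as the paper: verify that the autonomous trajectory $\{x_t\}$ satisfies the boundedness and BMSB hypotheses of \Cref{ass: BMSB, bounded xt} (the paper simply cites \cite{dean2019safely} for the latter, whereas you spell out the shift-robust small-ball argument), then invoke the proof of \Cref{thm: estimation err bdd} and \Cref{cor: uniform dist est. bdd} verbatim with $n_z=n_x$. Your derivation of $b_x$ in fact produces $\kappa\|x_0\|_2+\kappa\sqrt{n_x}\,w_{\max}/\rho$, which is the natural bound; the missing $w_{\max}$ in the stated formula appears to be a typo in the statement rather than an issue with your argument.
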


The proof of Corollary \ref{cor: estimation err bdd for B=0} is exactly the same as the proofs of Theorem \ref{thm: estimation err bdd} and Corollary \ref{cor: uniform dist est. bdd}. When $A^*$ is stable, we can show that $\|x_t\|_2 \leq b_x$ for all $t$. Further, by \cite{dean2019safely}, the sequence $\{x_t\}_{t\geq 0}$ satisfies the $(1, \sigma_x, p_x)$-BMSB condition. Therefore, we complete the proof.

\section{Proof of Theorem \ref{thm: loose bound}}
\label{sec:thrm2}
Specifically, we define $\epsilon_0= \frac{4\sqrt n_x}{a_1}(\hat w_{\max}-w_{\max})$.

The proof is similar to the proof of Theorem \ref{thm: estimation err bdd}.  Firstly, we define $\hat \Gamma_T$ as a translation of the set $\hat \Theta_T$:
\begin{align}\label{equ: define hat Gamma t}
\hat	\Gamma_t=\bigcap_{s=0}^{t-1} \{\gamma : \|w_s -\gamma  z_s \|_\infty \leq \hat w_{\max}\}, \quad \forall\, t\geq 0.
\end{align}
Notice that 
$$\hat \Theta_T=\theta^*+\hat	\Gamma_T$$
by considering $\gamma=\hat\theta -\theta^*$. Therefore, we can upper bound our goal event $\{\diam(\hat \Theta_T)>\delta+\epsilon_0\}$ by the event $\Ec_3$ defined below.
\begin{align}
	\label{eq:middle hat}
	\Pb(\diam(\hat \Theta_T)>\delta +\epsilon_0) \leq \Pb(\Ec_3), \text{ where } \Ec_3:=\{\exists\, \gamma \in \hat \Gamma_T, \text{ s.t. } \|\gamma\|_F\geq \frac{\delta+\epsilon_0}{2} \}.
\end{align}

Next, notice that 
$$ 	\Pb(\diam(\hat \Theta_T)>\delta +\epsilon_0) \leq \Pb(\Ec_3)\leq \Pb(\Ec_3\cap \Ec_2)+\Pb(\Ec_2^c)$$
By Lemma \ref{lem: bound PE c}, we have already shown $\Pb(\Ec_2^c)\leq \text{Term 1} $. So we only need to discuss $\Pb(\Ec_3\cap \Ec_2)$.

\begin{lemma}\label{lem: bdd Ec2 Ec3}
	$$\Pb(\Ec_3\cap \Ec_2)\leq \textup{Term 2}$$
\end{lemma}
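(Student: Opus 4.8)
The plan is to mimic the proof of Lemma~\ref{lem: bound PE and gamma>delta/2} almost verbatim, inserting one extra algebraic bookkeeping step that trades the gap $\hat w_{\max}-w_{\max}$ against the extra slack $\epsilon_0$ in the diameter. Recall that $\epsilon_0=\frac{4\sqrt{n_x}}{a_1}(\hat w_{\max}-w_{\max})$, equivalently $\hat w_{\max}=w_{\max}+\frac{a_1\epsilon_0}{4\sqrt{n_x}}$.

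First I would reuse the discretization machinery wholesale. The deterministic net $\M=\{\gamma_1,\dots,\gamma_{v_\gamma}\}$ from Lemma~\ref{lem: mesh on unit sphere of matrices} and the measurable objects $b_{i,t},j_{i,t}$ together with the stopping times $L_{i,k}$ from Appendix~\ref{append: define bi ji Li} depend only on $\{z_t\}$ and on the net, not on $w_{\max}$ or $\hat w_{\max}$, so they carry over unchanged; likewise Lemmas~\ref{lem: implication of E2 on gammai} and \ref{lem: implication of E2 on gamma z}, which only concern $\Ec_2$ and the geometry of $z_t$. Hence, under $\Ec_2$, for every $\gamma\in\R^{n_x\times n_z}$ there is an index $i$ with $b_{i,km+L_{i,k}}(\gamma z_{km+L_{i,k}})^{j_{i,km+L_{i,k}}}\ge \frac{a_1}{2\sqrt{n_x}}\|\gamma\|_F$ for all $0\le k\le T/m-1$.

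Next, on $\Ec_3\cap\Ec_2$ I would pick such a $\gamma\in\hat\Gamma_T$ with $\|\gamma\|_F\ge(\delta+\epsilon_0)/2$ and its index $i$, giving $b_{i,km+L_{i,k}}(\gamma z_{km+L_{i,k}})^{j_{i,km+L_{i,k}}}\ge\frac{a_1(\delta+\epsilon_0)}{4\sqrt{n_x}}$ for all $k$. Since $\gamma\in\hat\Gamma_T$ we have $\|w_s-\gamma z_s\|_\infty\le\hat w_{\max}$ for every $s$, i.e. $b(\gamma z_s)^j\le bw_s^j+\hat w_{\max}$ for every sign $b\in\{-1,1\}$ and coordinate $1\le j\le n_x$; substituting and using $\hat w_{\max}=w_{\max}+\frac{a_1\epsilon_0}{4\sqrt{n_x}}$ yields $b_{i,km+L_{i,k}}w_{km+L_{i,k}}^{j_{i,km+L_{i,k}}}+w_{\max}\ge\frac{a_1(\delta+\epsilon_0)}{4\sqrt{n_x}}-\frac{a_1\epsilon_0}{4\sqrt{n_x}}=\frac{a_1\delta}{4\sqrt{n_x}}$. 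Thus $\Ec_3\cap\Ec_2\subseteq\bigcup_{i=1}^{v_\gamma}\bigcap_{k=0}^{T/m-1}G_{i,k}$ with exactly the same events $G_{i,k}$ as in Lemma~\ref{lem: construct Gik and Lik}. Finally I would invoke Lemma~\ref{lem: bound Pr(Gik|Gik' for k'<k)} verbatim — its proof uses only that $b_{i,km+l},j_{i,km+l}$ are $\F_{km+l}$-measurable, that $L_{i,k}$ is a stopping time of $\{\F_{km+l}\}$, that $w_{km+l}$ is independent of $\F_{km+l}$, and Assumption~\ref{ass: tight bound on wt} — to obtain $\Pb(G_{i,k}\mid\bigcap_{k'<k}G_{i,k'})\le 1-q_w(\frac{a_1\delta}{4\sqrt{n_x}})$; chaining over $k$ and summing over $i\le v_\gamma$ gives $\Pb(\Ec_3\cap\Ec_2)\le v_\gamma\,(1-q_w(\frac{a_1\delta}{4\sqrt{n_x}}))^{T/m}=\T_2(\delta)$, which is the claim.

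I do not expect a genuine obstacle here: the single non-routine point is to recognize that the diameter slack must be chosen as exactly $\frac{4\sqrt{n_x}}{a_1}$ times the bound gap, so that the factor $\frac{a_1}{4\sqrt{n_x}}$ produced by the net-plus-pigeonhole step cancels the $\epsilon_0$ contribution and leaves the same quantity $q_w(\frac{a_1\delta}{4\sqrt{n_x}})$ as in the tight-bound case; everything else is a line-by-line transcription of Appendix~\ref{sec:D3}.
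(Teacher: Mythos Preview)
Your proposal is correct and follows essentially the same approach as the paper's proof: discretize via Lemma~\ref{lem: mesh on unit sphere of matrices}, use Lemma~\ref{lem: implication of E2 on gamma z} on $\Ec_2$ to land in the events $\Ec_{3,i}$, perform the $\hat w_{\max}\to w_{\max}$ bookkeeping via $\epsilon_0=\tfrac{4\sqrt{n_x}}{a_1}(\hat w_{\max}-w_{\max})$ to recover exactly the same events $G_{i,k}$, and then invoke Lemma~\ref{lem: bound Pr(Gik|Gik' for k'<k)} and sum. Your write-up is in fact a bit more explicit than the paper about why the stopping-time construction and the $G_{i,k}$ events are unchanged (they depend only on $\{z_t\}$ and the net), which is exactly the right observation.
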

\begin{proof}
Firstly, 	define 	
	\begin{align*}
		\Ec_{3,i}=\{ \exists \, \gamma\in \hat \Gamma_T, \text{ s.t. } b_{i, km+L_{i,k}}(\gamma z_{km+L_{i,k}})^{j_{i, km+L_{i,k}}} \geq \frac{a_1(\delta+\epsilon_0)}{4\sqrt n_x}, \ \forall\, k\geq 0\}.
	\end{align*}
	We have
	$\Pb(\Ec_3\cap\Ec_2)\leq \sum_{i=1}^{v_\gamma} \Pb(\Ec_{3,i}\cap \Ec_2)$
 based on the same proof ideas of Lemma \ref{lem: discretize E1 E2 formal}.

 Next, we will show that \begin{align}
     \Pr(\Ec_{3,k}\cap \Ec_2)\leq \Pb(\bigcap_{k=0}^{T/m-1}G_{i,k}) \label{equ: E3k by Gik}
 \end{align}
 This is because for any $\gamma\in\hat \Gamma_T$, we have  $b(\gamma z_t)^j \leq bw_t^j+\hat w_{\max}$ for any $b\in\{-1,1\}$, $1\leq j\leq n_x$, and $t\geq 0$. By $\Ec_{3,i}$, there exists $\gamma\in \hat \Gamma_T$ such that $b_{i, km+L_{i,k}}(\gamma z_{km+L_{i,k}})^{j_{i, km+L_{i,k}}} \geq \frac{a_1(\delta+\epsilon_0)}{4\sqrt n_x}$ for all $k\geq 0$. Thus, $b_{i, km+L_{i,k}}  w_{km+L_{i,k}}^{j_{i,km+L_{i,k}}} +\hat w_{\max}\geq  \frac{a_1(\delta+\epsilon_0)}{4\sqrt{n_x}}$ for all $k$.  Notice that this is equivalent with $b_{i, km+L_{i,k}}  w_{km+L_{i,k}}^{j_{i,km+L_{i,k}}} + w_{\max}\geq  \frac{a_1 \delta}{4\sqrt{n_x}}$ for all $k$ because
	 $\epsilon_0= \frac{4\sqrt n_x}{a_1}(\hat w_{\max}-w_{\max})$. In this way, we can prove \eqref{equ: E3k by Gik}.
  

	Finally, we can complete the proof by the following.
	\begin{align*}
		\Pb(\Ec_3\cap \Ec_2)& \leq \sum_{i=1}^{v_\gamma} \Pb(\Ec_{3,i}\cap \Ec_2) \leq \sum_{i=1}^{v_\gamma} \Pb(\bigcap_{k=0}^{T/m-1} G_{i,k})\\
		& = \sum_{i=1}^{v_\gamma} \Pb(G_{i,0}) \Pb(G_{i,1}\mid G_{i,0})\cdots \Pb(G_{i, T/m-1}\mid \bigcap_{k=0}^{T/m-2}G_{i,k})\\
		& \leq \sum_{i=1}^{v_\gamma}  (1-q_w(\frac{a_1 \delta}{4\sqrt n_x}))^{T/m} \leq \text{Term 1}
	\end{align*}
 where the second last inequality is by Lemma \ref{lem: bound Pr(Gik|Gik' for k'<k)} and the last inequality uses the definition of $v_\gamma$ in Lemma \ref{lem: mesh on unit sphere of matrices}.
\end{proof}


\section{Proofs of Theorem \ref{thm: convergence rate of wbarmax}, \Cref{cor: wbarmax bdd when qw=O(epsilon)}, and \Cref{thm: convergence of ucb-sme}}\label{append: unknown wmax}

This section provides proofs of the main results related to the SME with unknown $w_{\max}$ as discussed in \cref{sec: unknow wmax}. Namely, Theorem \ref{thm: convergence rate of wbarmax} and \Cref{cor: wbarmax bdd when qw=O(epsilon)} provide the rate of convergence of the estimator $\wbarmax$ defined in \eqref{equ: define wbarmax} to $w_{\max}$, and \Cref{thm: convergence of ucb-sme} states the rate of convergence of UCB-SME algorithm introduced in \eqref{equ: ucb-sme}.

For ease of notation, we introduce the following function indexed by the time horizon $T>0$,
\begin{equation}\label{eq: W_T function for wbarmax}
	W_T : \theta \mapsto \max_{0\leq t \leq T-1} \| x_{t+1} - \theta z_t \|_\infty.
\end{equation}
The estimator $\wbarmax$ is simply the infimum of this function, i.e., $\wbarmax = \inf_{\theta} W_T(\theta)$.

\subsection{Proof of Theorem \ref{thm: convergence rate of wbarmax}}\label{append: conv rate wbarmax}

The proof of Theorem \ref{thm: convergence rate of wbarmax} involves two steps:
\begin{itemize}
    \item \textit{Step 1:} We demonstrate that the learning error of $w_{\max}$ incurred by the estimator $\wbarmax$ is governed by the diameter of the uncertainty set $\Theta_T$ and the minimum learning error achievable if $\theta^\ast$ were known.
    \item \textit{Step 2:} We then provide an upper bound the probability of learning error exceeding a fixed threshold.
\end{itemize}

Before we proceed with the proof of Theorem~\ref{thm: convergence rate of wbarmax}, we present the the following technical lemma.
\begin{lemma}\label{lem:properties of W_T}
    Consider the sequence of functions $ \{ W_T\}_{T>0} $ defined in \eqref{eq: W_T function for wbarmax}. The following holds:
	\begin{itemize}
	\item[i.] $W_T$ is convex in $\R^{n_x\times n_z}$,
        \item[ii.] The sequence $ \{\inf_{\theta } W_T(\theta)\}_{T>0} $ is bounded and monotonically non-decreasing, i.e., $$ 0\leq \inf_{\theta } W_T(\theta) \leq \inf_{\theta } W_{T+1}(\theta)  \leq w_{\max}, $$ for all $T>0$,
        \item[iii.] $W_T$ attains its minimum in $\Theta_T$, i.e., $ \argmin_{\theta} W_T(\theta) \subset \Theta_T$.
	\end{itemize}
\end{lemma}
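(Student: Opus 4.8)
The plan is to establish the three claims in the order listed, each reducing to a short convexity argument combined with the boundedness of $w_t$ from \Cref{ass: on wt}.

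\textbf{Part (i) (convexity).} For each fixed $t$ the map $\theta\mapsto x_{t+1}-\theta z_t$ is affine in $\theta$, and $v\mapsto\|v\|_\infty$ is convex on $\R^{n_x}$, so $\theta\mapsto\|x_{t+1}-\theta z_t\|_\infty$ is convex; indeed it equals $\max_{1\le j\le n_x,\,b\in\{-1,1\}} b\,(x_{t+1}^j-(\theta z_t)^j)$, a maximum of affine functions of $\theta$. Since $W_T$ is the pointwise maximum of the finitely many such convex functions for $0\le t\le T-1$, it is convex --- and in fact polyhedral (piecewise linear), a fact I will reuse in Part (iii).

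\textbf{Part (ii) (boundedness and monotonicity).} The lower bound $\inf_\theta W_T(\theta)\ge 0$ is immediate since $W_T$ is a maximum of norms. For the upper bound I would evaluate $W_T$ at the true parameter: using $x_{t+1}=\theta^\ast z_t+w_t$ gives $W_T(\theta^\ast)=\max_{0\le t\le T-1}\|w_t\|_\infty\le w_{\max}$ by \Cref{ass: on wt}, hence $\inf_\theta W_T(\theta)\le w_{\max}$. Monotonicity follows from the pointwise identity $W_{T+1}(\theta)=\max\{W_T(\theta),\,\|x_{T+1}-\theta z_T\|_\infty\}\ge W_T(\theta)$; taking the infimum over $\theta$ on both sides yields $\inf_\theta W_{T+1}(\theta)\ge\inf_\theta W_T(\theta)$.

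\textbf{Part (iii) (minimizer lies in $\Theta_T$).} The key observation is that, by the definition \eqref{eq:membership_set}, $\Theta_T$ is precisely the $w_{\max}$-sublevel set of $W_T$: $\theta\in\Theta_T\iff W_T(\theta)\le w_{\max}$. By Part (ii) the optimal value satisfies $\min_\theta W_T(\theta)\le w_{\max}$, so any minimizer automatically lies in $\Theta_T$, giving $\argmin_\theta W_T(\theta)\subseteq\Theta_T$. The one point that deserves care is that the minimum is actually attained: this is not automatic for a convex function bounded below, but it holds here because $W_T$ is polyhedral --- equivalently, $\wbarmax$ is the optimal value of the linear program obtained by an epigraph reformulation of \eqref{equ: define wbarmax}, whose nonempty feasible polyhedron carries a bounded-below linear objective, so the value is attained. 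This existence step is the only mildly delicate part; everything else is routine.
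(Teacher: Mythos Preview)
Your proof is correct and Parts (i) and (ii) match the paper's argument essentially verbatim. The only substantive difference is in Part (iii), in how you establish that the infimum is attained. The paper argues by case analysis: if all $z_t=0$ then $W_T$ is constant, and if some $z_t\neq 0$ then $W_T$ is coercive so a minimizer exists by continuity. Your polyhedral/LP argument is different and in fact cleaner: the paper's coercivity claim is not quite right as stated, since even when some $z_t\neq 0$ the function $W_T$ can stay bounded along directions $\gamma$ with $\gamma z_t=0$ for all $t$ (this happens whenever $\{z_t\}$ fails to span $\R^{n_z}$). Your observation that $W_T$ is a pointwise maximum of finitely many affine functions, hence polyhedral, sidesteps this entirely --- a polyhedral convex function that is bounded below always attains its infimum, equivalently the epigraph LP has an optimal solution. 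The sublevel-set framing $\Theta_T=\{\theta:W_T(\theta)\le w_{\max}\}$ you use to conclude $\argmin W_T\subseteq\Theta_T$ is exactly what the paper does implicitly in its last sentence, so the two proofs agree there.
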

\begin{proof}
$(i.)$ For $0\leq t\leq T-1$, the function $\theta \mapsto \| x_{t+1} - \theta z_t \|_\infty$ is convex due to convexity of norms. Since the maximum of convex functions is convex \cite{boyd2004convex}, convexity of $W_T$ follows.\\
$(ii.)$ Notice that $W_{T+1}$ can be defined in terms of $W_T$ recursively as $W_{T+1}(\theta) = \max\left(W_T(\theta), \,\| x_{T+1} - \theta z_T \|_\infty\right)$. Thus, $W_T(\theta) \leq W_{T+1}(\theta)$ for all $\theta\in \R^{n_x\times n_z}$, implying monotonicity of $ \{\inf_{\theta } W_T(\theta)\}_{T>0}$. To see boundedness, first notice that $$ W_T(\theta^*) = \max_{0\leq t \leq T-1} \|x_{t+1}-\theta^* z_t \|_\infty = \max_{0\leq t \leq T-1} \|w_t\|_\infty \leq w_{\max},$$ since $x_{t+1} = \theta^* z_t + w_t$. Therefore, for any $T>0$, we have that $$\inf_{\theta} W_T(\theta) = \inf_{\theta} \max_{0\leq t \leq T-1} \| x_{t+1} - \theta z_t \|_\infty \leq \max_{0\leq t \leq T-1} \| x_{t+1} - \theta^* z_t \|_\infty \leq w_{\max}$$.\\
$(iii.)$ First, we show that $W_T$ attains its minimum on $\R^{n_x\times n_z}$. If $z_t = 0$ for $t \in [T]$, then $W_T$ is a constant function and any $\theta \in \R^{n_x \times n_z}$ is a minimum of $W_T$. Now, suppose $z_t \neq 0$ for some $t \in [T]$. Then, $W_T$ diverges at the infinity, i.e.,  $\lim_{k\to \infty}W_T(\theta_k) = \infty$ for any sequence $\{\theta_k\}_{k\in \N}$ such that $\|\theta_k\|\to \infty$ as $k\to \infty$. Since $W_T$ is convex and bounded below with finite infimum, there exists a global minimizer $\bar{\theta}_T \in \R^{n_x \times n_z}$ such that $W_T(\bar{\theta}_T) = \inf_{\theta} W_T(\theta) = \wbarmax$. Furthermore, by $(ii)$,  we have that $\| x_{t+1} - \bar{\theta}_T z_t \|_\infty \leq w_{\max}$ for all $t\in[T]$ and any global minimizer $\bar{\theta}_T \in \argmin_{\theta} W_T(\theta)$, hence $\bar{\theta}_T \in \Theta_T$ by definition.
\end{proof}

\paragraph{\textit{Step 1 of the Proof of Theorem~\ref{thm: convergence rate of wbarmax}:}}
We first show that the error margin of the estimate $\wbarmax$ from $w_{\max}$ is governed by the sum of two factors: (i) the diameter of $\Theta_T$, which arises due to the lack of knowledge of $\theta^\ast$, and (ii) the minimum learning error achievable if $\theta^\ast$ were known, namely
\begin{equation}\label{eq: learning error bound}
0 \leq w_{\max}-\wbarmax \leq  {b_z \diam(\Theta_T)}+{w_{\max}-\max_{0\leq t \leq T-1} \|w_t \|_\infty}.
\end{equation}

First, $0 \leq w_{\max}-\wbarmax$ is simply due to Lemma~\ref{lem:properties of W_T}. Next, we prove the second inequality $w_{\max}-\wbarmax \leq b_z \diam(\Theta_T)+w_{\max}-\max_{0\leq t \leq T-1} \|w_t \|_\infty$. 
By Lemma~\ref{lem:properties of W_T}, there exists $\bar{\theta}_T \in \Theta_T$ such that $W_T(\bar{\theta}_T) = w_{\max}$ and 
	\begin{align*}
		w_{\max} & = \max_{0\leq t \leq T-1}\|x_{t+1}-\bar{\theta}_T z_t\|_\infty, \\
		& = \max_{0\leq t \leq T-1}\|x_{t+1}-\theta^* z_t+(\theta^*-\bar{\theta}_T)z_t\|_\infty,\\
		& \geq \max_{0\leq t \leq T-1}\left(\|x_{t+1}-\theta^* z_t\|_\infty-\|(\theta^*-\theta_k)z_t\|_\infty\right),
	\end{align*}
where the inequality is due to reverse triangle inequality. Furthermore, by using the equivalence of $\ell_2$ and $\ell_\infty$ norms, i.e., $\|x\|_2 \leq \|x\|_\infty$ for $x\in \R^{n_x}$, we bound $w_{\max}$ further below by
\begin{align*}
    w_{\max} & \geq  \max_{0\leq t \leq T-1}\left(\|x_{t+1}-\theta^* z_t\|_\infty-\|(\theta^*-\bar{\theta}_T)z_t\|_2\right),\\
		& \geq \max_{0\leq t \leq T-1}\left(\|x_{t+1}-\theta^* z_t\|_\infty-\|\theta^*-\bar{\theta}_T\|_2\|z_t\|_2\right),\\
		& \geq \max_{0\leq t \leq T-1}\|w_t\|_\infty -b_z \diam(\Theta_T),
\end{align*}
where the second inequality is due to $\|\theta^*-\bar{\theta}_T\|_2 \coloneqq \sup_{z\neq 0} \frac{\|(\theta^*-\bar{\theta}_T )z\|_2}{\|z\|_2} \leq \frac{\|(\theta^*-\bar{\theta}_T )z_t\|_2}{\|z_t\|_2} $ and the third inequality follows from the assumption $\|z_t\|_2\leq b_z$, the equivalence of Frobenius and spectral norms $\|\theta^* - \bar{\theta}_T\|_2\leq \|\theta^*-\bar{\theta}_T\|_F$, and $\theta^*, \bar{\theta}_T\in \Theta_T$. Consequently,
	$$w_{\max}-\wbarmax\leq w_{\max}-	\max_{0\leq t \leq T-1}\|w_t\|_\infty +b_z \diam(\Theta_T).$$
	This completes the proof of the first step. \qed

\paragraph{\textit{Step 2 of the Proof of Theorem~\ref{thm: convergence rate of wbarmax}:}} 
Using the learning error bound in \eqref{eq: learning error bound}, we obtain an upper bound on the probability of learning error exceeding a fixed $\delta>0$ as shown below
\begin{equation}
    \Pb(w_{\max}-\bar w_{\max}^{(T)}>\delta)\leq \T_1+\T_2\left(\frac{\delta}{2b_z}\right)+\T_5\left(\frac{\delta}{2}\right),
\end{equation}
where $\T_5(\delta)\coloneqq (1-q_w(\delta))^T$.

First, using the the fact that $\{w_t\}_{t=0}^{T-1}$ are iid, we show that 
\begin{align*}
	\Pb\left(w_{\max}-\max_{0\leq t \leq T-1} \|w_t\|_\infty>\delta\right)& = \Pb(w_{\max}-\delta> \|w_t\|_\infty, \ \forall\, 0\leq t \leq T-1),\\
	& = \prod_{t=0}^{T-1}\Pb(w_{\max}-\delta >\|w_t\|_\infty),\\
	& \leq  \prod_{t=0}^{T-1}\Pb(w_{\max}-\delta > w_t^1),\\
	& \leq (1-q_w(\delta))^T,
\end{align*}
where the first inequality is due to $w_t^1\leq \|w_t\|_\infty$ and the second inequality is from Assumption~\ref{ass: tight bound on wt}. Finally, we obtain the desired convergence rate using the error bound in \eqref{eq: learning error bound} as follows
\begin{align*}
	\Pb(w_{\max}-\wbarmax >\delta)& \leq \Pb\left(b_z\diam(\Theta_T)+w_{\max}-\max_{0\leq t \leq T-1}\|w_t\|_\infty>\delta\right)\\
	& \leq \Pb\left(b_z\diam(\Theta_T)>\delta/2\, \text{ or }\, w_{\max}-\max_{0\leq t \leq T-1}\|w_t\|_\infty>\delta/2\right)\\
	& \leq \Pb\left(\diam(\Theta_T)>\frac{\delta}{2b_z}\right)+ \Pb\left(w_{\max}-\max_{0\leq t \leq T-1}\|w_t\|_\infty>\delta/2\right)\\
	&\leq \T_1+\T_2\left(\frac{\delta}{2b_z}\right)+\T_5(\delta/2).
\end{align*}
where the last inequality is by Theorem~\ref{thm: estimation err bdd}.

This completes the second and the last step of the proof. \qed

\subsection{Proof of \Cref{cor: wbarmax bdd when qw=O(epsilon)}}
First, by the proof of Corollary \ref{cor: uniform dist est. bdd} in \cref{sec:cor1}, we have that $\T_1\!=\!\frac{T}{m} \tilde O(n_z^{2.5}) a_2^{n_z} \exp(-a_3m)\!\leq\! \epsilon$ whenever $m \!\geq\! O(n_z\!+\!\log T\!+\!\log\frac{1}{\epsilon})$.
	
Next, we show $\T_5(\delta_T/2)\leq \T_2(\frac{\delta_T}{2b_z})$. Since $b_z\geq \sigma_z$ by the definition of BMSB, we have $\frac{a_1\delta_T}{8\sqrt{n_x}b_z}\leq \frac{\delta_T}{2}$. Since $q_w(\cdot)$ is a non-decreasing function, we have $1-q_w(\frac{a_1\delta_T}{8\sqrt{n_x}b_z})\geq 1-q_w(\frac{\delta_T}{2})$. Notice that $m\geq 1$, and the constant factors in front of the $(1-q_w(\cdot))^{\ceil{(T/m)}}$ in $\T_2$ is also larger than 1.  Consequently, $\T_2(\frac{\delta_T}{2b_z})\geq \T_5(\delta/2)$. Therefore,  the  choice of $\delta_T$ for the second term $\T_2$ also guarantees $\T_5(\delta_T/2)\leq \epsilon$. 
 
  Therefore, it suffices to ensure $\T_2(\frac{\delta_T}{2b_z})\leq \epsilon$. 
	Notice that, when $\frac{\delta_T}{2b_z}=2w_{\max}$, then $\T_2(\frac{\delta_T}{2b_z})=0\leq \epsilon$, so there exists $\delta_T$ such that $\T_2(\frac{\delta_T}{2b_z})\leq \epsilon$. 
	
	Next, we will show that there exists such $\delta_T$ that diminishes to zero as $T$ goes to infinity. Notice that we need $$1-q_w\left(\frac{a_1\delta_T}{8b_z\sqrt{n_x}}\right)\leq \left(\frac{\epsilon}{\tilde O ((n_xn_z)^{2.5} a_4^{n_x n_z})}\right)^{1/\ceil{T/m}},$$ so that $$q_w\left(\frac{a_1\delta_T}{8b_z\sqrt{n_x}}\right)\geq 1- \left(\frac{\epsilon}{\tilde O ((n_xn_z)^{2.5} a_4^{n_x n_z})}\right)^{1/\ceil{T/m}},$$ where the right hand side converges to zero as $T\to \infty$.
 
 Now, consider $\delta(k)=1/k$. Since $q_w\left(\frac{a_1\delta(k)}{8b_z\sqrt{n_x}}\right)> 0$, there exists a large enough $T_k$ for any $k>0$ such that for any $T\geq T_k$, we have that 
 $$q_w\left(\frac{a_1\delta(k)}{8b_z\sqrt{n_x}}\right)\geq 1- \left(\frac{\epsilon}{\tilde O ((n_xn_z)^{2.5} a_4^{n_x n_z})}\right)^{1/\ceil{T_k/m}}.$$ 
 Furthermore, for any $T>0$, we can define 
 $$ \delta_T = \begin{cases}
     \delta(k), &\textrm{ if } T_k\leq T <T_{k+1}, \textrm{ for } k>0,\\
     2 w_{\max}, &\textrm{ if } T < T_1.
 \end{cases}$$
In this way, $\delta_T$ satisfies $\T_2(\frac{\delta_T}{2b_z})\leq \epsilon$ and $\delta_T\to 0$ as $T\to +\infty$.

Finally, using the proof of Corollary \ref{cor: uniform dist est. bdd}, we can show that there exists $\frac{\delta_T}{2b_z}=\tilde O(  n_x^{1.5}n_z^2/T)$ such that $\T_2(\frac{\delta_T}{2b_z})\leq \epsilon$ whenever $q_w(\delta)=O(\delta)$. This implies $\delta_T=\tilde O(  n_x^{1.5}n_z^2/T)$ and completes the proof. \qed

\subsection{Proof of \Cref{thm: convergence of ucb-sme}}
We first show that the unknown $\theta^*$ is a member of USC-SME uncertainty set $\hat \Theta^{\textup{ucb}}_T$ with high probability. By Theorem~\ref{thm: convergence rate of wbarmax}, Corollary \ref{cor: wbarmax bdd when qw=O(epsilon)}, and the definition in \eqref{equ: ucb-sme},  we have
	$$\Pb(w_{\max}>\whatmax)= \Pb(w_{\max}-\wbarmax >\delta_T)\leq 3\epsilon,$$
	which implies 
	$1-3\epsilon \leq \Pb(w_{\max}\leq \whatmax)\leq \Pb(\theta^* \in \hat \Theta^{\textup{ucb}}_T).$

Next, we show that the diameter of the UCB-SME uncertainty set is controlled by $\delta_T$ with high probability. Notice that $\hat \Theta^{\textup{ucb}}_T  \subseteq   \Theta_T(w_{\max}+\delta_T)$ because $\wbarmax \leq w_{\max}$. Therefore, by Theorem \ref{thm: loose bound},  the following holds for any constant $r>0$:
	\begin{align*}
		\Pb(	\diam(\hat \Theta^{\textup{ucb}}_T)> r+ a_5 \sqrt{n_x}\delta_T)& \leq \Pb(\diam(\Theta_T(w_{\max}+\delta_T))> r+ a_5 \sqrt{n_x}\delta_T),\\
		& \leq \T_1+\T_2(r).
	\end{align*}
Let $r=\delta_T$, then, using the inequality $\T_2(\delta_T)\leq \T_2(\delta_T/2b_z)$, we have that
\begin{align*}
	\Pb(	\diam(\hat \Theta^{\textup{ucb}}_T)> \delta_T+ a_5 \sqrt{n_x}\delta_T)& \leq \Pb(\diam(\Theta_T(w_{\max}+\delta_T))> \delta_T+ a_5 \sqrt{n_x}\delta_T),\\
	& \leq 2\epsilon.
\end{align*}
Therefore, with probability $1-2\epsilon$, the diameter of $\hat \Theta^{\textup{ucb}}_T$ is bounded above by
$$	\diam(\hat \Theta^{\textup{ucb}}_T)\leq \delta_T+ a_5 \sqrt{n_x}\delta_T=O(\sqrt{n_x} \delta_T).$$

Finally, we can verify that the event $\{\diam(\hat \Theta^{\textup{ucb}}_T)\leq \delta_T+ a_5 \sqrt{n_x}\delta_T=O(\sqrt{n_x} \delta_T)\}$ and the event $\{\theta^* \in \hat \Theta^{\textup{ucb}}_T\}$ simultaneously happen with probability at least $1-3\epsilon$ as follows:
\begin{align*}
&\Pb\left(	\theta^*\not \in \hat \Theta_T(\whatmax), \;\text{ or }\; \diam(\hat\Theta_T(w_{\max}+\delta_T))> \delta_T+ a_5 \sqrt{n_x}\delta_T\right)\\
& \leq \Pb\left(w_{\max}\!-\!\max_{0\leq t \leq T-1}\|w_t\|_\infty \geq \delta_T/2 ,\;\text{ or }\;\diam(\Theta_T)>\delta_T/2b_z, \;\text{ or } \;\diam(\hat\Theta_T(w_{\max}+\delta_T))> \delta_T\!+\! a_5 \sqrt{n_x}\delta_T\right)\\
&\leq \Pb\left(w_{\max}\!-\!\max_{0\leq t \leq T-1}\|w_t\|_\infty \geq \delta_T/2\right)+	\Pb\left(\diam(\Theta_T)>\delta_T/2b_z,\; \text{ or }\; \diam(\hat\Theta_T(w_{\max}+\delta_T))> \delta_T\!+\! a_5 \sqrt{n_x}\delta_T\right)\\
& \leq \epsilon+ \Pb(\Ec_2)+\sum_{i=1}^{v_\gamma}\Pb\left(\bigcap_{k} G_{i,k}(\min(\delta_T/2b_z, \delta_T))\right)\\
&\leq 3\epsilon.
\end{align*}
The third inequality follows from 
\begin{itemize}
    \item the proof of Theorem~\ref{thm: convergence rate of wbarmax} in \cref{append: conv rate wbarmax},
    \item Theorem \ref{thm: loose bound}, 
    \item the fact that the probabilties $\Pb(\diam(\Theta_T)>\delta_T/2b_z )$ and $\Pb(\diam(\hat\Theta_T(w_{\max}+\delta_T))> \delta_T+ a_5 \sqrt{n_x}\delta_T)$ are bounded by the same events $\Ec_2$, 
    \item and $ G_{i,k}(\delta_T), G_{i,k}(\delta_T/2b_z) \subseteq G_{i,k}(\min(\delta_T/2b_z, \delta_T))$, where $G_{i,k}(\delta)$ is defined in Lemma \ref{lem: construct Gik and Lik} as a function of $\delta$.
\end{itemize}
This completes the proof. \qed

\section{Simulation details and additional experiments}\label{append: simulation}
This section provides the details on the simulation experiments, along with some additional
results. The code for replicating the presented results can be found in the {github repository}: \url{https://github.com/jy-cds/non-asymptotic-set-membership}.

\subsection{Baseline: LSE's confidence regions}\label{append: LSE computation}
In all our experiments, we use the 90\% confidence region of the LSE as the baseline uncertainty set.  The diameters of LSE's confidence regions are computed by taking minimum of the formulas provided in the following two papers: Lemma E.3 in \cite{simchowitz2020naive} and Theorem 1 in \cite{abbasi2011regret}. 
To apply Theorem 1 in \cite{abbasi2011regret}, we used regularization parameter $\lambda = 0.1$, $\delta=0.1$ for 90\% confidence, $S = \sqrt{\tr \left(\theta^{*,\top} \theta^*\right)}$, variance proxy $L = 1$ for truncated Gaussian distribution and $L= 4/3$ for uniform distribution.

To determine the parameters in Lemma E.3 of \cite{simchowitz2020naive}, we approximately optimize the projection matrix $P$ in Lemma E.3 as follows. First, we consider an orthogonal transformation of the empirical
covariance matrix $\Lambda = \sum_{t=1}^T z_t z_t^\top$ with $\Lambda = GMG^\top$ where $G$ is unitary. This transforms the event $\mathcal{E}$ in Lemma E.3 to $M \geq \lambda_1 P_0 + \lambda_2 (I-P_0)$, where $GP_0 G ^\top = P$. We select $P_0$ as a block matrix $[[I_p,\, 0],[0,\,0]]$, then optimize over the block size $p$ in search of the tightest LSE confidence bound.

\subsection{Figure \ref{fig: teaser}: SME and LSE uncertainty set visualization}

In this experiment, we consider $x_{t+1}=A^*x_t +B^* u_t +w_t$, where $A^*=0.8$ and $B^*=1$ are unknown. $w_t \sim \text{TrunGauss}(0,\sigma_w, [-w_{\max}, w_{\max}])$ is i.i.d. and $u_t \sim\text{TrunGauss}(0,\sigma_u, [-u_{\max}, u_{\max}])$ are also i.i.d generated, where {$\sigma_w=\sigma_u=0.5$, and $w_{\max}=u_{\max}=1$.} We compare SME that knows $w_{\max}=1$ and LSE's 90\% confidence region computed based on  \Cref{append: LSE computation}.

\subsection{Figure \ref{fig:loose_bound}}
In this experiment, we consider the  
 the linearized longitudinal
flight control dynamics of Boeing 747 \cite{lale2022reinforcement, mete2022augmented} with i.i.d.\ bounded inputs and disturbances sampled from truncated Gaussian  and uniform distribution. The dynamics is $x_{t+1} = A^*x_t + B^*u_t + w_t$ with
\begin{equation*}
    A =  \begin{bmatrix}
        0.99 & 0.03 & -0.02& -0.32\\
0.01& 0.47 &4.7 &0\\
0.02 & -0.06 &0.4& 0\\
0.01 & -0.04 &0.72 &0.99
    \end{bmatrix}\quad B = \begin{bmatrix}
        0.01 &0.99\\
-3.44 &1.66\\
-0.83& 0.44\\
-0.47& 0.25
    \end{bmatrix}.
\end{equation*}

Disturbances are sampled from $\text{TrunGauss}(0,I, [-w_{\max}, w_{\max}]^4)$ as well as $\text{Unif}([-w_{\max}, w_{\max}]^4)$, while control inputs are samples from $\text{TrunGauss}(0,I, [-w_{\max}, w_{\max}]^2)$ in both disturbance settings, with $w_{\max} = 2$. To compute the UCB for SME using \eqref{equ: ucb-sme}, we heuristically define $\delta_T = \beta \, \frac{ n_x^{1.5} \cdot n_z^{2} \cdot (\max_t \|x_t\|)}{T}$, where $n_x = 4$ and $n_z = 6$ are the system dimension, while $\beta$ is a tunable parameter. This definition matches the dimension and time order of the theoretical analysis in \Cref{cor: wbarmax bdd when qw=O(epsilon)}. In both experiments of \Cref{fig:loose_bound}, we fix $\beta = 0.01$.

In Figure 2(a)-(b), we plot SME with accurate and conservative bounds of  $w_{\max}$, UCB-SME, and LSE's 90\% confidence regions computed by \Cref{append: LSE computation}. We use 10 different seeds to generate the disturbance sequences for each plot, and use the shaded region to denote 1 standard deviation from the mean (colored lines).

\subsection{Figure \ref{fig:dimension_gaussian}}
In this experiment, we consider autonomous systems of the form $x_{t+1} =A^*x_t +w_t$, where $A^*\in\mathbb{R}^{n_x}$ is randomly sampled and its spectral radius is normalized to be $0.9$. We simulate SME and LSE for $n_x=5,10,15,20,25$. The disturbances are sampled from $\text{TrunGauss}(0,I, [-w_{\max}, w_{\max}]^{n_x})$ as well as $\text{Unif}([-w_{\max}, w_{\max}]^{n_x})$ with $w_{\max} = 2$. This simula!tion is run on 10 random seeds and the total length of the simulation is set to be $T=1000$ across all $n_x$ experiments. The mean is plotted as solid lines and the shaded regions denote 1 standard deviation from the mean.

Though SME's theoretical bound with respect to the dimension is  $\tilde O(n_x^{1.5} n_z^2 )$ from \Cref{cor: estimation err bdd for B=0}, which is much worse than LSE's bound, it is not reflected in \Cref{fig:dimension_gaussian}. Therefore, it is promising that the dimension scaling in the analysis in \Cref{sec:analysis} can be further tightened. We leave this for future work.

\subsection{Figure \ref{fig:ra-mpc}}
To illustrate the quantitative impact of using SME for adaptive tube-based robust MPC, we study tube-based robust MPC for a system $x_{t+1}=A^*x_t+B^* u_t+w_t$ with nominal system $A^* = 1.2$, $B^* = 0.9$ with an initial model uncertainty set $\Theta_0 := [1,1.2]\times[0.9,1.1]$. We use the basic tube-based robust MPC method \cite{rawlings2017model,mayne2005robust} and parameterize the control policy as $u_k = K x_k + v_k + \eta_k$, where $K=-1$, $v_k$ is determined by the tube-based robust MPC algorithm,  and $\eta_k$ is a bounded exploration injection with $\eta_k \sim \textup{Unif}([-0.01, 0.01])$. The disturbance $w_k$ has a known bound of $w_{\max} = 0.1$ and is generated to be i.i.d. $\textup{Unif}([-0.1, 0.1])$. The horizon of  the tube-based robust MPC is set to be 5. The state and input constraints are such that $x_k \in [-10,10] $ and $u_k \in [-10,10]$ for all $k \geq 0$. We consider the task of constrained
LQ tracking problem with a time-varying cost function $c_t := (x_t - g_t)^\top Q (x_t {-} g_t) + u_t^\top R u_t$ where the target trajectory is generated as $g_t = 8 \sin(t/20)$. 

We compare the performance of an adaptive tube-based robust MPC controller that uses the SME for uncertainty set estimation against one that uses the LSE 90\% confidence region (LSE). For better visualization of the trajectory difference as a result of different estimation methods, we used the minimum of the the dominant factors in \citet[equation C.12]{dean2018regret} and the LSE 90\% confidence region for the LSE uncertainty set.
 We also plot the offline optimal RMPC controller, i.e., the controller that has knowledge of the true underlying system parameters (OPT).

Since the controller has to robustly satisfy constraints against the worst-case model in the uncertainty set, smaller uncertainty set for the tube-based robust MPC means more optimal trajectories can be computed. This observation is consistent with the extensive empirical results in the control literature \cite{lorenzen2019robust, lu2019robust,kohler2019linear}.

\end{document}